\documentclass[11pt,twoside]{article}
\usepackage[letterpaper,top=1.15in,bottom=1.0in,left=1.2in,right=1.2in,headheight=14pt,headsep=0.22in]{geometry}
\usepackage[T1]{fontenc}
\usepackage{lmodern}
\usepackage{amsmath,amssymb,amsthm,mathtools}
\usepackage{booktabs,array,longtable,graphicx,microtype}
\usepackage[hidelinks]{hyperref}
\usepackage{enumitem}
\usepackage{fancyhdr}
\usepackage{titling}
\usepackage{titlesec}
\usepackage{needspace}
\usepackage{placeins}
\usepackage{tikz}
\titleformat{\section}{\normalfont\large\scshape\centering}{\thesection.}{0.5em}{}
\titleformat{\subsection}{\normalfont\normalsize\bfseries}{\thesubsection.}{0.5em}{}
\pretitle{\begin{center}\large\bfseries\MakeUppercase}
\posttitle{\par\end{center}\vskip 1.0em}
\preauthor{\begin{center}\normalsize}
\postauthor{\par\end{center}\vskip 0.8em}
\predate{}
\postdate{}
\numberwithin{equation}{section}
\hypersetup{pdftitle={Compatible additions on a six-element commutative semigroup},pdfauthor={}}
\allowdisplaybreaks
\newtheorem{theorem}{Theorem}[section]
\newtheorem{proposition}[theorem]{Proposition}
\newtheorem{lemma}[theorem]{Lemma}
\newtheorem{corollary}[theorem]{Corollary}
\theoremstyle{definition}

\theoremstyle{remark}
\newtheorem{remark}[theorem]{Remark}
\newcommand{\V}{\mathsf V}
\newcommand{\Sub}{\operatorname{Sub}}

\newcommand{\F}{\mathcal F}
\newcommand{\Can}{\operatorname{Can}}
\newcommand{\gammaD}{\gamma^{D}}

\newcommand{\ManuscriptAuthors}{}

\renewcommand{\ManuscriptAuthors}{%
  Lili Wang, Jinjing Wu, Aifa Wang \\[5pt]
  {\normalsize School of Mathematical Sciences}\\
  {\normalsize Chongqing University of Technology}\\
  {\normalsize Chongqing, 400054, P.R. China}\\[3pt]
  {\small Corresponding author: Aifa Wang, \href{mailto:wangaf@cqut.edu.cn}{\texttt{wangaf@cqut.edu.cn}}}%
}

\hypersetup{pdfauthor={ Lili Wang; Jinjing Wu; Aifa Wang}}
\title{Compatible additions on a six-element commutative semigroup: equational bases and subvariety lattices}
\author{\ManuscriptAuthors}
\date{}
\fancypagestyle{plain}{%
  \fancyhf{}
  \fancyfoot[C]{\thepage}
  }
\renewenvironment{abstract}{%
  \begin{center}\begin{minipage}{0.82\textwidth}\small
  \textsc{Abstract.}\ }{%
  \end{minipage}\end{center}\vspace{0.7em}}
\renewcommand{\footnoterule}{\kern-3pt\hrule width 0.28\columnwidth\kern 2.6pt}
\begin{document}
\maketitle
\begin{abstract}
Let $M$ be the six-element commutative semigroup occurring as the common multiplicative reduct of the semirings $SR_6$ and $TR_6$. The closing paragraph of Shao, Ren, and Gao~\cite{ShaoRenGao2026} asks for the finite-basis and subvariety questions for the four remaining compatible additions on $M$. We answer these questions for the four isomorphism types $R_{01},R_{02},R_{11},R_{12}$. First, we classify all compatible additions on $M$: there are nine labelled additions and six isomorphism types, parametrized by $R_{ij}$ with $0\leq i\leq j\leq 2$. For each of the four new types we give a graph-theoretic criterion for every identity, an explicit infinite basis, and a proof of nonfinite basability. The generated varieties $\V(R_{01})$ and $\V(R_{02})$ have eleven subvarieties each, while $\V(R_{11})$ has sixty-six. The lattice $\Sub(\V(R_{12}))$ is countably infinite. Every identity in this variety reduces to a subset of twenty-five fixed identities together with two monotone path families $\gamma_n$ and $\gammaD_n$. This yields a canonical signature $(H,p,q)$, complete normal forms, explicit meet and join operations, and a formula for all covers. There are 153 fixed nodes, 43 one-parameter families, and 9 two-parameter families; exactly eighteen subvarieties are finitely based, and the unique limit subvariety is $\V(SR_6)$. The strong nonfinite-basis status of the four finite semirings remains open.
\end{abstract}

\begingroup
\renewcommand{\thefootnote}{}
\begin{NoHyper}
\footnotetext{\textit{2020 Mathematics Subject Classification.} 16Y60, 03C05, 08B15, 08B05.\\
\textit{Key words and phrases.} additively idempotent semiring, compatible addition, finite basis problem, limit variety, subvariety lattice, graph semiring.}
\end{NoHyper}
\endgroup

\section{Introduction}
A variety is a class of algebras closed under subalgebras, homomorphic images, and arbitrary direct products. By Birkhoff's theorem, varieties are precisely equational classes. The finite basis problem asks whether the identities of an algebra can be generated by finitely many identities. This problem is especially subtle for additively idempotent semirings (ai-semirings), where the additive operation is a commutative idempotent semigroup and the multiplicative operation is a semigroup satisfying both distributive laws.

The max-plus algebra and its finite quotients provide a rich source of examples; see \cite{AcetoEsikIngolf2023,Dolinka2007,JacksonRenZhao2022}. A variety is called a limit variety if it is nonfinitely based while every proper subvariety is finitely based. Lyu, Ren, and Yue \cite{LyuRenYue2026} proved that the variety generated by the six-element semiring $SR_6$ is a limit variety and described its four-element subvariety chain. Shao, Ren, and Gao \cite{ShaoRenGao2026} studied a second semiring $TR_6$ on the same multiplicative reduct, proved that $SR_6$ and $TR_6$ are nonfinitely based but not strongly nonfinitely based, and determined the nine-element subvariety lattice of $\V(SR_6,TR_6)$. For related work on limit varieties and finitely based reducts, see \cite{RenJacksonZhao2023,RenZhao2016}.

The common multiplicative reduct has more compatible additions. The authors of~\cite{ShaoRenGao2026} observed that there are six additions up to isomorphism, studied two of them, and left the other four for future work. The purpose of this paper is to carry out that programme. We use the notation $R_{ij}$ introduced below; $R_{00}=SR_6$ and $R_{22}=TR_6$, whereas $R_{01},R_{02},R_{11},R_{12}$ are the four remaining types.

Our results are as follows.
\begin{enumerate}[label=(\roman*),leftmargin=2.2em]
\item We classify all compatible additions on the fixed multiplication, obtaining nine labelled tables and six isomorphism classes.
\item We give a uniform graph model for terms and a complete identity criterion for each of the four new semirings. Each has an explicit infinite basis and is nonfinitely based.
\item We determine the complete subvariety lattices for $R_{01},R_{02}$, and $R_{11}$ (of sizes $11,11$, and $66$), and give a canonical description of the whole countably infinite lattice $\Sub(\V(R_{12}))$.
\item The latter lattice is parametrized by a finite set $H$ of fixed identities and two path thresholds $0\le p\le q\le\infty$. We prove effective formulas for normalization, inclusion, meets, joins, and covers. Exactly eighteen nodes are finitely based; all other nodes are nonfinitely based, and the unique limit subvariety is $\V(SR_6)$.
\end{enumerate}

The proofs use graph folding, closed-term algebras and path-distance invariants. A finite implication calculus describes all relations among the fixed representatives. Separating the resulting signatures into ordinary edge states, conditioned edge states and marking states gives direct counting proofs for the six threshold regions and the three finite lattices. The same calculus identifies the sixteen-node distributive interval above the internal-mark three-edge variety.

\section{The common multiplication and the six additions}
We work in the language with two binary operation symbols $+$ and $\cdot$, without constants. To avoid confusion with a multiplicative identity, we relabel the six elements as $0,c,p,q,a,b$; the symbol $0$ is only a name for the multiplicative zero and additive maximum of the displayed reduct, not a language constant. The only nonzero products are
\[
pb=bp=ab=ba=aq=qa=c,
\]
and every other product, including every square, is $0$. Thus the graph of nonzero multiplication is the path
\[
p\;--\;b\;--\;a\;--\;q.
\]
The element $0$ is absorbing for multiplication and is the greatest element for every compatible addition.

For $s\in M$ put $N(s)=\{t\in M:st=c\}$. Then
\[
N(0)=N(c)=\varnothing,\quad N(p)=\{b\},\quad N(q)=\{a\},\quad
N(a)=\{b,q\},\quad N(b)=\{p,a\}.
\]
If $+$ is compatible, then $(a+b)^2=0$ and distributivity give $0+c=0$. Consequently distributivity is equivalent to
\[
N(s+t)=N(s)\cap N(t).
\]
This observation makes the addition classification elementary.

\begin{theorem}[classification of compatible additions]\label{thm:additions}
There are nine compatible additions on the labelled multiplication, and six isomorphism types. They are denoted by $R_{ij}$, $i,j\in\{0,1,2\}$, with $R_{ij}\cong R_{ji}$ and no other identifications. In $R_{ij}$ the additive order is the transitive closure of
\[
a<p<0,\qquad b<q<0,\qquad c<0,
\]
together with the first $i$ elements of the chain $a<p$ lying below $c$ and the first $j$ elements of the chain $b<q$ lying below $c$.
\end{theorem}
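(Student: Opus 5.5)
The plan is to reduce everything to the criterion stated just before the theorem. A compatible addition $+$ is a semilattice operation, and we order $M$ by $s\le t$ iff $s+t=t$. Distributivity is then equivalent to $N(s+t)=N(s)\cap N(t)$, and this identity already forces $0+c=0$. To see why the criterion suffices: every product lies in $\{0,c\}$, and $c+c=c$ while $0+c=0+0=0$. Hence $xs+xt=c$ iff $xs=xt=c$, which is exactly the condition $N(s+t)=N(s)\cap N(t)$. So the task is to classify the semilattice orders on $M$ for which $N$ converts joins into intersections.

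\textbf{Step 1 (forced relations).} The sets $N(s)$ are $\{b\},\{a\},\{b,q\},\{p,a\},\varnothing,\varnothing$ for $s=p,q,a,b,c,0$ respectively.
\begin{itemize}
\item $0$ is the top. For any $x$ we have $N(x+0)=\varnothing$, so $x+0\in\{0,c\}$. If $x+0=c$, then $c+0=x+0+0=c$, contradicting $0+c=0$. Hence $x+0=0$ for all $x$, and in particular $c<0$.
\item $a<p$. We have $N(a+p)=\{b\}$, and $p$ is the only element whose neighbourhood is $\{b\}$. So $a+p=p$.
\item $b<q$, by the symmetric argument.
\item Monotonicity of $N$. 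The criterion gives $s\le t\Rightarrow N(t)\subseteq N(s)$. The only inclusions among the neighbourhoods come from the relations already listed, together with relations $x\le c$.
\end{itemize}
Consequently the order consists of the two chains $a<p<0$ and $b<q<0$, the relation $c<0$, and possibly some relations $x<c$. The set of elements below $c$ is a down-set. Its intersection with the chain $a<p$ is therefore an initial segment of length $i\in\{0,1,2\}$. Likewise its intersection with $b<q$ is an initial segment of length $j\in\{0,1,2\}$.

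\textbf{Step 2 (each such order works).} Conversely, each of the nine posets determined by $(i,j)$ is a join-semilattice with top $0$. If two incomparable elements both lie below $c$, their join is $c$; otherwise it is $0$.

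The criterion has to be checked only on incomparable pairs, since for comparable pairs it follows from the monotonicity of $N$. Every incomparable pair has join in $\{c,0\}$, so $N$ of the join is $\varnothing$. The incomparable pairs are the pairs from $\{a,p\}\times\{b,q\}$ and the pairs involving $c$. A direct check shows that in each of these cases the two neighbourhoods are disjoint, so the criterion holds. This yields exactly nine labelled additions.

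\textbf{Step 3 (isomorphism types).} A semiring isomorphism $R_{ij}\to R_{kl}$ is in particular an automorphism of the multiplicative reduct $M$.
\begin{itemize}
\item It fixes $0$, the absorbing element.
\item It fixes $c$, the unique nonzero product.
\item It induces an automorphism of the nonzero-multiplication graph, the path $p\,-\,b\,-\,a\,-\,q$.
\end{itemize}
Hence it is either the identity or the reflection $p\leftrightarrow q$, $a\leftrightarrow b$. The reflection carries the order of $R_{ij}$ onto that of $R_{ji}$. The identity identifies nothing. Thus $R_{ij}\cong R_{kl}$ iff $\{i,j\}=\{k,l\}$, which gives $6$ isomorphism types.

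I expect the main obstacle to be the bookkeeping in Step 1: making sure that no unexpected relation (for instance $p<q$ or $c<p$) can occur. Each such relation is excluded by comparing neighbourhoods through the monotonicity of $N$. The rest of the argument is routine verification.
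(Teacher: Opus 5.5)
Your overall route is the same as the paper's: the neighbourhood-intersection criterion, the forced relations $a<p$ and $b<q$, $0$ on top, prefixes of the two chains below $c$, and the reflection of the path $p-b-a-q$ for the isomorphism count. You carry out the verification in more detail than the paper does. Steps 1--3 are otherwise correct.

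There is one incorrect justification. The identity $N(s+t)=N(s)\cap N(t)$ does \emph{not} by itself force $0+c=0$. Take the order with $a<p<0<c$ and $b<q<0<c$, so that $c$ is the top element. This order satisfies $N(s+t)=N(s)\cap N(t)$ for every pair, since all incomparable pairs join to $0$ and have disjoint neighbourhoods. Yet it is not distributive: $a(a+b)=a\cdot0=0$, while $a^2+ab=0+c=c$.

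Your own derivation of ``distributivity $\Leftrightarrow$ criterion'' also uses $0+c=0$. So, as written, the order of deductions is circular, and the first bullet of Step 1 (``contradicting $0+c=0$'') has no valid support.

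The fix is the order used in the paper. First obtain $0+c=0$ directly from distributivity: every square in $M$ is $0$, so $0=(a+b)^2=a^2+ab+ba+b^2=0+c$. Only then is distributivity equivalent to the neighbourhood criterion, and Step 1 goes through. In Step 2, note explicitly that $0+c=0$ holds in each of the nine posets because $c<0$ there. That fact is what lets the criterion give distributivity for the converse. With these two remarks the proof is complete.
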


\begin{proof}
The neighbourhood-intersection equation gives $a+p=p$ and $b+q=q$, while all other sums of distinct nonzero elements can only be $0$ or $c$. If $s+0=c$, associativity and $c+0=0$ would imply $c=0$, so $0$ is the additive maximum. On each of the chains $a<p$ and $b<q$, the elements below $c$ form a downward closed prefix, giving three choices. For elements from different chains, the sum is $c$ exactly when both are below $c$, and is $0$ otherwise. Each displayed order is a join semilattice, and the neighbourhood-intersection equation gives both distributive laws. A multiplication automorphism fixes $0,c$ and either fixes the path or reflects it, interchanging $a\leftrightarrow b$ and $p\leftrightarrow q$. Hence $(i,j)$ and $(j,i)$ are the only identifications.
\end{proof}

The original notation is $SR_6=R_{00}$ and $TR_6=R_{22}$. The four additions studied in this paper are therefore $R_{01},R_{02},R_{11},R_{12}$. For reference, the defining order relations not common to all four are shown in Table~\ref{tab:orders}.

\begin{table}[ht]
\centering
\caption{Additional additive comparisons for the four new types.}
\label{tab:orders}
\begin{tabular}{c|l}
\toprule
Type & Comparisons below $c$\\
\midrule
$R_{01}$ & $b<c$\\
$R_{02}$ & $b<q<c$\\
$R_{11}$ & $a<c$ and $b<c$\\
$R_{12}$ & $a<c$ and $b<q<c$\\
\bottomrule
\end{tabular}
\end{table}

\section{A graph calculus for identities}
For terms in an additively idempotent semiring, repeated summands may be deleted and multiplication distributes over addition. We write
\[
u\preceq v\quad\Longleftrightarrow\quad u+v\approx v.
\]
An identity $u\approx v$ is equivalent to the finite set of absorption inequalities obtained by comparing each monomial on either side with the other side. Write $M(u)$ for the variables occurring as linear summands and $G(u)$ for the graph whose edges are the square-free quadratic summands. Its vertex set consists only of variables incident with an edge. The marked set is
\[
W(u)=M(u)\cap V(G(u)).
\]
Isolated linear summands, namely $M(u)\setminus V(G(u))$, are retained separately and are never treated as marked vertices of the quadratic graph.

A term is called basic-zero if it contains a square, a monomial of degree at least three, or an odd cycle in its quadratic graph. In each $R_{ij}$ every basic-zero term evaluates to $0$. Otherwise the quadratic graph is bipartite. In a connected component, let $E$ and $O$ denote its two parts, with $E$ chosen to contain the marked vertices whenever this is possible. Let $E(W)$ (respectively $O(W)$) be the vertices at even (respectively odd) distance from $W$; a path of length zero is allowed in $E(W)$.

The following theorem is the basic identity criterion. It is stated for a nonzero right-hand term; if the right side is basic-zero, every absorption inequality is valid.

\begin{theorem}[identity criteria]\label{thm:criterion}
Let $u$ not be basic-zero and let $G$ be its quadratic graph. Monomials already displayed in $u$ are always absorbed. Apart from them, the following list is exhaustive.
\begin{enumerate}[label=(\alph*),leftmargin=2.2em]
\item In $R_{01}$, if two marked vertices are joined by an odd path, then $u=0$. Otherwise the only nontrivial absorbed monomials are edges $xy$ with one endpoint in $W$ and the other in $O(W)$.
\item In $R_{02}$, if two marked vertices are joined by an odd path, then $u=0$. Otherwise the only nontrivial absorbed monomials are the degree-one variables in $E(W)$.
\item In $R_{11}$, a nontrivial edge $xy$ is absorbed exactly when $x$ and $y$ are joined by an odd path and at least one endpoint is marked.
\item In $R_{12}$, a nontrivial edge $xy$ is absorbed exactly when $x$ and $y$ are joined by an odd path and both endpoints are marked.
\end{enumerate}
\end{theorem}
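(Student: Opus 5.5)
The plan is to reduce every absorption inequality $m\preceq u$ to a statement about graph homomorphisms from $G$ into the multiplication path $p-b-a-q$, and then to decide each case by exhibiting explicit witness homomorphisms.

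\emph{Step 1 (evaluation of $u$).} Fix $R=R_{ij}$ and an assignment $\varphi$. Every quadratic summand evaluates to $c$ or $0$, and $0$ is the additive maximum. Hence, when $G$ has an edge, $u(\varphi)\in\{0,c\}$. Moreover $u(\varphi)=c$ exactly when three conditions hold:
\begin{itemize}[leftmargin=2.2em]
\item $\varphi$ restricted to $V(G)$ is a graph homomorphism into the path $p-b-a-q$;
\item every marked vertex is sent into the down-set $D=\{s: s+c=c\}$;
\item every isolated linear summand is sent into $D$.
\end{itemize}
Isolated linear summands never restrict $\varphi$ on $V(G)$, since they can be sent to $c$. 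By Theorem~\ref{thm:additions} and Table~\ref{tab:orders}, the set $D\setminus\{c\}$ is $\{b\}$, $\{b,q\}$, $\{a,b\}$ and $\{a,b,q\}$ for $R_{01},R_{02},R_{11},R_{12}$ respectively. The case $G=\varnothing$ reduces to the linear part and is routine.

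A candidate monomial $m$ not displayed in $u$ is then absorbed exactly when $m(\varphi)\le u(\varphi)$ for every $\varphi$. This splits into three kinds of $m$.
\begin{itemize}[leftmargin=2.2em]
\item A basic-zero monomial is absorbed only if $u\equiv 0$, because $0$ is the additive maximum.
\item An edge $xy$ is absorbed iff every admissible $\varphi$ (one with $u(\varphi)=c$) sends $x,y$ to adjacent vertices of the path.
\item A variable $x$ is absorbed iff every admissible $\varphi$ sends $x$ into $D$.
\end{itemize}
Conversely, a monomial fails to be absorbed as soon as some admissible $\varphi$ violates the corresponding requirement. In that case the value $c$ is moved out of reach, or a single variable is sent outside $D$ (for instance to $p$).

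\emph{Step 2 (forcing).} Because the path is bipartite with parts $\{p,a\}$ and $\{b,q\}$, each component of $G$ is mapped part-to-part. So two vertices can be adjacent in the image only if they are joined by an odd path; this gives the necessary direction of (c) and (d). For the sufficiency directions, and for (a) and (b), I would argue part by part:
\begin{itemize}[leftmargin=2.2em]
\item In $R_{01}$ every marked vertex is forced to $b$. Two marks joined by an odd path therefore make $u\equiv0$, and otherwise $W$ lies in one part.
\item In $R_{02}$ the marked part is forced into $\{b,q\}$.
\item In $R_{11}$ a marked endpoint is forced into $\{a,b\}$. A vertex in $\{a,b\}$ has all its path-neighbours adjacent to whatever the odd-distance partner receives once the parity is fixed. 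I would verify this carefully: the image of $y$ lies in $N(\varphi(x))$ whenever $\varphi(x)\in\{a,b\}$ and $y$ is on the opposite side.
\item In $R_{12}$ both endpoints must be marked. A single marked vertex could still be sent to $q$, which leaves its partner free to go to $p$.
\end{itemize}

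\emph{Step 3 (witness homomorphisms).} For every monomial claimed \emph{not} absorbed, I would construct an explicit admissible $\varphi$ by folding along distances. The main tool is to choose a base set $S$ (a vertex, or $W$ itself) and send $v$ according to $d(v,S)$ truncated onto the path. For example, in $R_{01}$, put $v\mapsto b,a,q,a,q,\dots$ according to $d(v,W)=0,1,2,3,\dots$, or $p,b,p,\dots$ to push a given vertex to $p$. Components of $G$ are treated independently, so vertices in different components are easily made nonadjacent. An odd-distance pair $x,y$ that is not an edge has distance at least $3$, and the distance folding can send it to the far pair $p,q$.

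\emph{Main obstacle.} The hardest part is Step 3 in the presence of marks. The witness must keep every marked vertex inside $D$ while still sending the tested vertex outside $D$, or the tested pair to nonadjacent points. This is where the sets $O(W)$ and $E(W)$ enter, and it is especially delicate in $R_{12}$, where marks may be sent to $q$ but not to $p$. I would first try folding by distance from $W\cup\{x\}$ with a carefully chosen starting label. I would then check that parity consistency holds exactly in the complement of the absorbed configurations listed in (a)–(d).
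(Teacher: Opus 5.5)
Your Step 1 is exactly the paper's reduction. A valuation with $u=c$ is a homomorphism $G\to p-b-a-q$ sending marks into $\{b\}$, $\{b,q\}$, $\{a,b\}$ or $\{a,b,q\}$, and Step 2 gives the positive cases. Two facts still need stating there. In $R_{01}$ all of $O(W)$, in every marked component, lands in $\{p,a\}=N(b)$. In $R_{12}$ a mark in the part sent to $\{p,a\}$ is forced to $a$, which is adjacent to both $b$ and $q$.

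The genuine gap is Step 3. It carries the whole content of ``exhaustive'', you leave it open, and both tools you propose for it fail.
\begin{itemize}
\item \emph{Different components.} The remark that vertices of different components are ``easily made nonadjacent'' is false in $R_{01}$. Every mark is rigidly sent to $b$, so a mark of one component and an $O(W)$-vertex of another are adjacent under every admissible valuation. These are exactly the cross-component edges absorbed in (a). Independent orientation is available only for unmarked components, and in $R_{11},R_{12}$.
\item \emph{Distance folding.} No map of the form $v\mapsto f(d(v,S))$ works in general. Such a map is a homomorphism only if $S$ lies in one part of the component, since every edge needs distinct images. It also gives equal images to vertices outside $S$ with equal neighbourhoods. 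In $R_{11}$ take edges $xv_1,v_1v_2,v_2y,v_2w_1,v_1w_2$ with marks $w_1,w_2$. By (c) the edge $xy$ is not absorbed, and any refuting valuation must send $x,y$ to $p,q$ while keeping $w_2,w_1$ in $\{a,b\}$. But $x,w_2$ are twin leaves at $v_1$ and $y,w_1$ are twin leaves at $v_2$, and these pairs lie in different parts. So whichever part contains $S$, the twin pair in the other part receives a common image.
\end{itemize}

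The missing idea, which is how the paper argues, is to relocate only the queried vertices. Colour each component onto the middle edge $a$--$b$, with the marks at $b$ in $R_{01},R_{02}$, and then move the query to the ends.
\begin{itemize}
\item \emph{Linear query.} Send it to $p$ with its part at $a$. For an unmarked vertex on the marked side in $R_{01}$, send it instead to $q$ with its part at $b$.
\item \emph{Missing edge $xy$ in one component.} Send $x\mapsto p$ and $y\mapsto q$, the rest of $x$'s part to $a$, and the rest of $y$'s part to $b$. In a marked component of $R_{01},R_{02}$, take $y$ on the marked side. In $R_{12}$ with exactly one marked endpoint, take $y$ to be that endpoint. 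Because $xy$ is not an edge, every actual edge still maps onto $pb$, $ba$ or $aq$, and every other mark stays in its permitted set.
\item \emph{Cross-component queries in $R_{01},R_{02}$.} The same relocation refutes a query from the marked side of one component to the unmarked side of another, provided the vertex sent to $q$ may go there. In $R_{01}$ it must be unmarked, and this fails precisely in the absorbed $W\times O(W)$ case; in $R_{02}$ any vertex may go to $q$. Other cross-component pairs are refuted by the middle-edge colouring itself or by reorienting an unmarked component.
\item \emph{Isolated or fresh variables.} A query through an isolated linear variable is refuted by sending that variable to $c$; a fresh variable is sent to $0$.
\end{itemize}
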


\begin{proof}
If $G$ has an edge and $u\ne0$, every quadratic summand has value $c$, and therefore $u=c$. The restriction of the valuation to $V(G)$ is a homomorphism to $P_4=p-b-a-q$. Conversely, any such homomorphism extends to a valuation with $u=c$ if the marked vertices have images in the following sets and the isolated linear summands have images at most $c$:
\[
\begin{array}{c|c}
R_{01}&\{b\}\\
R_{02}&\{b,q\}\\
R_{11}&\{a,b\}\\
R_{12}&\{a,b,q\}.
\end{array}
\]
For $R_{01}$ and $R_{02}$ the permitted marked images lie in the same part of $P_4$. Thus opposite-part marks in one component force $u=0$. Otherwise, in $R_{01}$ all marks have image $b$, and vertices at odd distance from a mark have image $a$ or $p$; both are adjacent to $b$. This proves (a), including edges between different marked components. In $R_{02}$ every vertex at even distance from a mark has image $b$ or $q$, both at most $c$, proving (b). In $R_{11}$ a marked endpoint has image $a$ or $b$ and is adjacent to every vertex in the opposite part. In $R_{12}$, after orienting the parts, the marked images in one part consist only of $a$, and those in the other part lie in $\{b,q\}$; all these pairs are adjacent. This proves the asserted positive edge cases.

We give countervaluations for the remaining cases. An undisplayed linear query outside $V(G)$ is either a fresh variable, which may be sent to $0$, or an already displayed isolated summand. For an unmarked graph vertex, start with a two-colouring into the middle edge and move the queried vertex to an end of $P_4$. In $R_{01}$ use $q$ on the marked side and $p$ on the other side; in $R_{02}$ an unforced vertex can be put on the $a$ side; in $R_{11}$ and $R_{12}$ orient the component so that the queried vertex can be sent to $p$. These images are not at most $c$. An unmarked component can always be oriented independently.

For quadratic queries involving an isolated linear variable, send that variable to $c$; for a fresh variable use $0$. Queries with endpoints in the same part can be made zero by mapping both into that part of $P_4$. For a missing opposite-part edge with both endpoints unmarked, preserve the endpoints as $p,q$ and send all other vertices in the two parts to $a,b$, respectively. Since the queried edge is missing, every actual edge still has nonzero product. This assignment respects the marked restrictions in $R_{11}$ and $R_{12}$. In $R_{12}$, if one endpoint is marked, instead send it to $q$, the unmarked endpoint to $p$, and the remaining vertices of their parts to $b,a$. Again all actual edges have nonzero product.

For $R_{01}$, orient each marked component with its marks on the $b$ side. If a query is not one of the positive cases in (a), an endpoint on this side that must be moved to $q$ is unmarked; an opposite endpoint can be moved to $p$. Missingness of the query prevents the only forbidden pair $pq$ from occurring as an actual edge. If both endpoints lie on corresponding sides of different components, the middle-edge colouring itself makes their product zero. The same construction works for $R_{02}$ even when the endpoint moved to $q$ is marked, because $q\le c$ there. In either semiring an unmarked component may be oriented to put the two queried endpoints on corresponding sides. For $R_{11},R_{12}$, independently orienting distinct components also makes a cross-component query zero. These cases exhaust the undisplayed quadratic queries. Squares and higher products are always zero.

Finally, if $G$ has no edges, assign all displayed variables to $c$. Every quadratic or higher query then has value $0$, and a fresh linear variable may also be assigned $0$. Thus a purely linear term absorbs precisely its displayed summands. This covers the case excluded from the homomorphism argument.
\end{proof}

The graph formulation is useful for two reasons. It gives a short proof of the explicit bases below, and it remains stable under arbitrary polynomial substitutions once one uses finite supports: a nonzero image of a quadratic term can only use degree-one summands of the substituted variables.

\section{Infinite bases and nonfinite basability}
All bases and relative varieties in this paper are considered within the variety of commutative ai-semirings. Thus associativity of both operations, commutativity and idempotence of addition, commutativity of multiplication, and the two distributive laws are understood. Let $\Sigma_0$ consist of
\[
x\preceq y^2,\qquad x^2\approx yzt,
\]
and, for every $n\ge1$, the odd-cycle identity
\[
\sigma_n:\quad x\preceq x_1x_2+x_2x_3+\cdots+x_{2n}x_{2n+1}+x_{2n+1}x_1. \label{eq:oddcycles}
\]
Put $C(x,y)=x+xy$ and $D(x,y)=x+y+xy$. We use the following additional identities:
\begin{align*}
\xi &: x^2\approx D(x,y),\\
\alpha &: x_1x_4\preceq x_1+x_1x_2+x_2x_3+x_3x_4,\\
\beta &: xz\preceq x+xy+t+tz,\\
\eta &: z\preceq x+xy+yz,\\
\theta_n &: x_1x_{2n+2}\preceq x_1+x_{2n+2}+x_1x_2+x_2x_3+\cdots+x_{2n+1}x_{2n+2}.
\end{align*}

\begin{theorem}[explicit bases]\label{thm:bases}
Relative to the commutative ai-semiring axioms, complete bases are
\[
\begin{array}{c|c}
R_{01}&\Sigma_0\cup\{\xi,\alpha,\beta\}\\
R_{02}&\Sigma_0\cup\{\xi,\eta\}\\
R_{11}&\Sigma_0\cup\{\alpha\}\\
R_{12}&\Sigma_0\cup\{\theta_n:n\ge1\}.
\end{array}
\]
The claim of nonfinite basability will be proved separately below.
\end{theorem}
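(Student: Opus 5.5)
The proof has two halves for each of the four semirings: soundness (each listed identity holds in $R_{ij}$) and completeness (every identity of $R_{ij}$ is derivable from the listed ones). Soundness is a direct check with Theorem~\ref{thm:criterion}.

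\textbf{Soundness.} $\Sigma_0$ holds in every $R_{ij}$. Squares, products of degree at least three and odd cycles are basic-zero, so they evaluate to $0$, the additive maximum, and every $x$ lies below them. For the remaining identities, read off the graph $G$ and marked set $W$ of the right-hand side and apply the relevant clause of Theorem~\ref{thm:criterion}.
\begin{itemize}
\item $\xi$ in $R_{01}$ and $R_{02}$: the edge $xy$ has both endpoints marked, an odd path between marks, so $D(x,y)=0=x^2$.
\item $\alpha$ in $R_{01}$: $x_4$ lies at odd distance from the mark $x_1$, so $x_1x_4$ is an edge from $W$ to $O(W)$.
\item $\alpha$ in $R_{11}$: the path $x_1x_2x_3x_4$ is odd and $x_1$ is marked.
\item $\beta$ in $R_{01}$: $z$ is at odd distance from the mark $t$, and $x$ is marked.
\item $\eta$ in $R_{02}$: $z$ has degree one and lies at even distance from $x$.
\item $\theta_n$ in $R_{12}$: both endpoints are marked and joined by an odd path.
\end{itemize}

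\textbf{Completeness, reduction.} Let $u\approx v$ hold in $R_{ij}$; it suffices to derive each absorption $m\preceq u$ with $m$ a monomial of $v$ not displayed in $u$. Using $\Sigma_0$, any basic-zero $u$ is provably equal to a fixed zero term $x^2$, since $x\preceq y^2$ and $x^2\approx yzt$ make all such terms absorb everything. In $R_{01}$ and $R_{02}$, the first case of the criterion is handled by showing that $\xi$ turns an odd path between two marks into $0$. Write the path as $w_1w_2\cdots w_{2k}$ with $w_1,w_{2k}$ marked.
\begin{itemize}
\item In $R_{01}$, repeated use of $\alpha$ derives $w_1w_{2k}\preceq u$, after which $\xi$ gives $0$.
\item In $R_{02}$, use $\eta$ to walk the mark along the path two steps at a time, reaching an adjacent marked pair, and then apply $\xi$.
\end{itemize}

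\textbf{Completeness, positive cases.} Each absorbed monomial in Theorem~\ref{thm:criterion} is derived as follows.
\begin{itemize}
\item In $R_{11}$, an edge $xy$ joined by an odd path with $x$ marked follows by induction on the path length using $\alpha$: the substitution $x_1\mapsto x$, with $x_2,x_3$ consecutive path vertices, shortens the path by two each time. Within one component an odd path between $x$ and $y$ exists exactly when they lie in opposite parts.
\item In $R_{12}$, an edge $x_1x_{2n+2}$ between two marked vertices joined by an odd path of length $2n+1$ is exactly the instance of $\theta_n$ with the path variables substituted.
\item In $R_{01}$, an edge from a mark $x$ to a vertex $z\in O(W)$ in the same component comes from $\alpha$-chaining. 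When $z$ lies in a different marked component, use $\beta$, with $t$ the nearby mark in $z$'s component.
\item In $R_{02}$, degree-one variables in $E(W)$ come from iterating $\eta$ along even paths from a mark.
\end{itemize}
Because all derivations take place inside ai-semirings, monotonicity lets us add the remaining summands of $u$ freely.

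\textbf{Main obstacle.} The hardest step is arranging the inductions so that each intermediate absorbed monomial is genuinely present or derivable. This matters most in $R_{01}$: for an $O(W)$ vertex, the nearest mark may differ from the endpoint in question, so a combination of $\alpha$ and $\beta$ must transport marks along even paths before the edge can be obtained. If this fails, the fallback is to strengthen the induction hypothesis: along the way, derive that $x\cdot w\preceq u$ for every $w$ at odd distance from some mark, in the component order.
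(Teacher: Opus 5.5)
Your proposal is correct and follows essentially the same route as the paper. Soundness is read off from Theorem~\ref{thm:criterion}. Completeness derives every absorbed monomial listed there: $\Sigma_0$ for basic-zero terms; $\alpha$-shortening followed by $\xi$, or $\eta$-propagation followed by $\xi$, for the zero cases; and $\alpha$, $\eta$ and direct $\theta_n$ instances for the positive cases. An arbitrary identity is then decomposed into monomial absorptions.

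The one step to tighten is the cross-component case in $R_{01}$. The paper handles it as your fallback suggests. First, $\alpha$-chaining gives the edge $tz$ for any mark $t$ in the component of $z$; this works because all marks of a component lie in one part. Then $\beta$ is applied to $tz$ and an edge at $x$. No transport of marks is needed, nor is it possible, since $R_{01}$ absorbs no new linear monomials.
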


\begin{proof}
The first identity of $\Sigma_0$ makes all squares equal and greatest in the additive order; write their common value as $0$. The second identifies every cubic monomial with $0$, and multiplication by another variable preserves $0$. The identities $\sigma_n$ make every odd-cycle term zero. These observations account for all basic-zero cases.

The law $\alpha$ shortens a path from a marked endpoint by two edges: apply it to its first three edges, keep the other summands as additive context, and iterate. In $R_{01}$, an odd path between marks therefore gives a double-marked edge, which $\xi$ makes zero. For a cross-component query allowed by (a), first use $\alpha$ to connect a mark in the second component to the queried vertex and then apply $\beta$ to this marked edge and an edge at the first mark. In $R_{02}$, successive instances of $\eta$ mark the vertices at even distance from a mark. If two marks are at odd distance, this produces adjacent marks and $\xi$ applies. In $R_{11}$ the same path-shortening argument gives exactly (c). In $R_{12}$ the odd path in (d) gives a direct instance of $\theta_n$, unless the edge is already present. Every listed axiom is valid by Theorem~\ref{thm:criterion}. Since all monomial absorptions in that theorem have now been derived, decomposing an arbitrary identity into its monomial absorptions proves completeness.
\end{proof}

\begin{lemma}[rigidity of unmarked bipartite terms]\label{lem:rigidity}
Suppose that a term $s$ has only degree-one and square-free degree-two monomials, its quadratic graph is bipartite, and no variable occurs both as a degree-one and degree-two monomial. If $s\approx t$ holds in one of $R_{01},R_{02},R_{11},R_{12}$, then $s$ and $t$ have the same monomials after idempotent-addition normalization.
\end{lemma}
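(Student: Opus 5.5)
We would prove the lemma by applying the identity criterion, Theorem~\ref{thm:criterion}, in both directions. An identity $s\approx t$ is equivalent to the two families of absorption inequalities: $m\preceq s$ for every monomial $m$ of $t$, and $m'\preceq t$ for every monomial $m'$ of $s$. It therefore suffices to show that every monomial of $t$ is already a summand of $s$. The reverse inclusion will then follow once we know that $t$ satisfies the same hypotheses, or more simply from a symmetric argument after the first inclusion is established.

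First we observe that $s$ is not basic-zero. It has no squares and no monomials of degree at least three, and its quadratic graph $G(s)$ is bipartite, so it has no odd cycle. Next, the hypothesis that no variable occurs both as a degree-one and a degree-two monomial says exactly that
\[
W(s)=M(s)\cap V(G(s))=\varnothing.
\]
Thus every linear summand of $s$ is an isolated linear summand. We now read off the nontrivial absorbed monomials in each of the four cases of Theorem~\ref{thm:criterion}:
\begin{enumerate}[label=(\alph*),leftmargin=2.2em]
\item in $R_{01}$ they are edges with an endpoint in $W$, and there are none;
\item in $R_{02}$ they are degree-one variables in $E(W)$, the set of vertices at even distance from $W$, which is empty;
\item in $R_{11}$ they require at least one marked endpoint, and there is none;
\item in $R_{12}$ they require two marked endpoints, and there are none.
\end{enumerate}
The odd-path collapse clauses in (a) and (b) need two marked vertices, so they do not apply either. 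Hence the only monomials $m$ with $m\preceq s$ valid in the given $R_{ij}$ are the displayed summands of $s$. In particular, every monomial of $t$ is a monomial of $s$.

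For the converse we use that $t\preceq s$, since each of its monomials is a summand of $s$, together with $s\preceq t$ from the identity. Each monomial of $s$ must be absorbed by $t$. Since the monomials of $t$ form a subset of those of $s$, the term $t$ has only linear and square-free quadratic monomials. Its quadratic graph is a subgraph of the bipartite graph $G(s)$, so $t$ is not basic-zero. Its linear summands are among those of $s$, and these are disjoint from $V(G(s))\supseteq V(G(t))$, so $t$ again has no marked vertices. Applying the same case analysis to $t$ shows that $t$ absorbs only its own summands, so every monomial of $s$ lies in $t$, and the two monomial sets agree.

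The only delicate point is checking that the exhaustiveness clause of Theorem~\ref{thm:criterion} really covers linear queries in the unmarked setting, including queries of fresh variables and of unmarked graph vertices. It also has to cover quadratic queries involving isolated linear variables, including products of two isolated linear summands. The theorem's proof handles these through its explicit countervaluations: fresh variables are sent to $0$, isolated summands to $c$, and an unmarked vertex is moved to an end of $P_4$. So we would simply cite the statement "apart from them, the list is exhaustive". We would also remark that the edgeless case is covered by the theorem's final paragraph.
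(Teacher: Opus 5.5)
Your proposal is correct and follows essentially the same route as the paper. The identity criterion, including its purely linear case, shows that a term which is not basic-zero and has no marked vertices absorbs only its displayed monomials. Hence $t$ is a subpolynomial of $s$, it inherits the hypotheses, and the symmetric argument gives equality. Your explicit checks that $s$ is not basic-zero and that $W(s)=\varnothing$ leaves nothing to absorb in each of the cases (a)--(d) simply spell out what the paper's two-line proof leaves implicit.
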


\begin{proof}
Every monomial of $t$ is absorbed by $s$. The graph criterion and its purely linear case say that $s$ absorbs only its displayed monomials, so $t$ is a subpolynomial of $s$. In particular, $t$ is also unmarked and bipartite. Applying the same argument in the other direction gives equality of the monomial sets.
\end{proof}

\begin{theorem}[nonfinite basability]\label{thm:nfb}
Each of $R_{01},R_{02},R_{11},R_{12}$ is nonfinitely based.
\end{theorem}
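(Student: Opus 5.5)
Since each semiring is finite, it is locally finite. I will show that for every $n$ the identities of $R_{ij}$ in at most $n$ variables (equivalently, any finite subset of its identities) fail to imply some valid identity. I will do this by building, for each large $n$, an algebra $A_n$ that satisfies every identity of $R_{ij}$ in fewer than $n$ variables but violates one specific identity of $R_{ij}$.

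The witnessing identity is the odd-cycle family $\sigma_n$. It is valid in all four semirings, because odd cycles are basic-zero. So a single uniform construction handles all four types, and only the basis-specific verification differs.

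\textbf{The construction.} Let $A_n$ be the closed-term (relatively free–style) algebra on generators $x_1,\dots,x_{2n+1}$. Its elements are $0$ together with the polynomials whose monomials are linear letters and edges $x_ix_{i+1}$ (indices mod $2n+1$) of the cycle $C_{2n+1}$. Each element is closed under the absorptions of Theorem~\ref{thm:criterion}, with one exception: the full cycle term is declared nonzero. Every other term whose quadratic graph is a proper subgraph of the cycle is a path forest, hence bipartite, and is evaluated by the criterion. Products of two edges, squares, and non-adjacent products go to $0$. In $A_n$ the identity $\sigma_n$ fails, since $x_1\not\preceq$ the cycle term.

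\textbf{Verification.} Take an identity $u\approx v$ of $R_{ij}$ with fewer than $n$ variables, and any assignment into $A_n$. By the remark after Theorem~\ref{thm:criterion} on finite supports, only degree-one summands of substituted values contribute to nonzero quadratic images. If the image involves all $2n+1$ cycle edges, then some variable of $u$ is sent to a value containing two edges or two cycle letters, and the pigeonhole principle forces a repetition. That produces a product of a variable with itself, or a degree-$\geq 3$ term, which is $0$. Otherwise the images of $u$ and $v$ live in the proper path-subgraph, where $A_n$ behaves like a subalgebra of a power of $R_{ij}$. There the identity holds because every path forest is realized by valuations into $R_{ij}$, via the countervaluations of Theorem~\ref{thm:criterion}.

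\textbf{The main obstacle.} The delicate step is the pigeonhole argument. A single variable sent to a sum $x_i+x_{i+1}x_{i+2}$ can contribute edges in several places, so I must show that reaching the whole cycle with few variables forces some variable to appear twice in one monomial or forces a marked vertex. A marked vertex on the cycle makes the term absorb linearly and so changes the criterion. If this gets messy, my first fallback is to switch to the path families: use $\theta_n$ for $R_{12}$, and for the others use odd paths between marks (via $\alpha,\eta$). In each case I would pick the witness identity so that its left graph is a long path with marked endpoints, where deleting any edge kills the absorption. Lemma~\ref{lem:rigidity} then shows that unmarked bipartite intermediate terms in a derivation cannot be rewritten, so any derivation from identities in fewer than $n$ variables cannot shorten a path of length $2n+1$ in one step.
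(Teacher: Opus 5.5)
There is a genuine gap, and it sits in the verification step, which is where all the content of the theorem lies.

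\textbf{The false dichotomy.} Elements of your $A_n$ are sums, not single vertices or edges. So a variable occurring in $u$ only as a linear summand may be assigned the cycle term $w$ itself, which is nonzero by your own declaration. Likewise, two linear variables may be assigned complementary arcs of $C_{2n+1}$. The image then contains every cycle edge, yet no variable is repeated and no monomial of degree at least three appears. Your dichotomy ``whole cycle $\Rightarrow 0$, otherwise a proper path forest inside a power of $R_{ij}$'' is therefore false; already $u=y$ with $y\mapsto w$ refutes the claim that such an image is $0$.

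\textbf{Products.} Even here the pigeonhole sentence is wrong: $y\mapsto x_{i-1}+x_{i+1}$, $y'\mapsto x_i$ gives a nonzero product. What is true is only a counting bound: products coming from $k$ variables cover at most $2k$ cycle edges. That bound says nothing about linear summands.

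\textbf{What closing the gap requires.} You would have to define how $A_n$ closes elements whose graph is the whole cycle together with marks; your description covers only the bare cycle term. You would then have to prove that every short identity respects this closure. This is a global matter. In $R_{02}$ the three-variable law $\eta$ carries the mark of $w+x_1$ around the odd cycle until two marks become adjacent, and $\xi$ then forces $w+x_1=0$. Yet every proper path-forest part of $w+x_1$ is nonzero.

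\textbf{The fallback.} It does not help. For $R_{01},R_{02},R_{11}$ the absorptions along odd paths between marks follow from the finitely many laws $\alpha,\eta,\xi$ (see the proof of Theorem~\ref{thm:bases}), so they cannot witness nonfinite basability. Also, Lemma~\ref{lem:rigidity} concerns unmarked terms only, so it does not control derivations of marked path identities such as $\theta_n$.

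\textbf{The paper's route.} The paper never builds such an algebra; it argues syntactically. Take a derivation of $w\approx w+z$ from a finite basis whose members have at most $m<L$ variables, and look at the first step that changes the polynomial. The contribution $\varphi(s)$, or $P\varphi(s)$, consists of monomials of $w$.
\begin{itemize}
\item If the multiplicative context $P$ is present, degree counting makes $s$ purely linear.
\item Otherwise degree counting excludes squares, cubic monomials, and variables occurring both linearly and quadratically in $s$.
\item An odd cycle in the graph of $s$ would have length at most $m$, and so would give an odd closed walk shorter than $L$ in $C_L$, which is impossible.
\end{itemize}
Thus $s$ is unmarked bipartite with disjoint isolated linear summands. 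Lemma~\ref{lem:rigidity} then gives $t$ the same monomials as $s$, so the step changes nothing, a contradiction.

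Because rigidity is a property of the identity $s\approx t$ itself, the size of the values substituted for its linear variables is irrelevant. That is exactly the case your semantic check cannot control. If you want to keep a semantic formulation, take $A_n$ to be the relatively free algebra of the variety defined by the short identities; proving $w\neq w+z$ there is the same argument.
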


\begin{proof}
Fix one of the four semirings and suppose that a finite identity basis $F$ exists. Let $m$ bound the number of variables in an identity in $F$, and choose an odd integer $L>\max\{m,3\}$. Let
\[
w=x_1x_2+x_2x_3+\cdots+x_{L-1}x_L+x_Lx_1
\]
be the quadratic term of an $L$-cycle, and let $z$ be new. The odd-cycle identity gives $w\approx w+z$. Work with polynomials modulo the commutative ai-semiring axioms. At the first step that changes the polynomial, a basis identity $s\approx t$ is used under a substitution $\varphi$ and additive and possibly multiplicative contexts. Its contribution before the step is $\varphi(s)$ or $P\varphi(s)$, with $P$ nonempty in the latter case; each monomial of that contribution occurs in $w$. Missing contexts mean absence of an operation, not a constant in the language.

Substitutions are nonerasing because the language has no constants. If $P$ is present, every monomial of $s$ must have degree one: a quadratic monomial would give total degree at least three. The rigidity lemma then makes $s\approx t$ a polynomial equality, so this step cannot change $w$.

Suppose that $P$ is absent. The original term $s$ contains no square or monomial of degree at least three, since no instance of such a monomial can be a square-free quadratic monomial of $w$. A variable cannot occur both linearly and quadratically in $s$: its linear occurrence would force every monomial of its image to have degree two, making its quadratic occurrence have degree at least three. If the quadratic graph of $s$ had an odd cycle, it would have one of length at most $m$. Every variable on that cycle has a substitution image consisting of linear summands. Choose one support vertex from each image. The substituted cycle would give a homomorphism into $C_L$, hence an odd closed walk in $C_L$ of length less than $L$. Such a walk does not exist: after cancellation of immediate reversals, a nonempty odd closed walk in a cycle must traverse the whole cycle. Thus $s$ is an unmarked bipartite term with disjoint isolated linear summands. The rigidity lemma again makes the step trivial. This contradiction rules out every finite basis.
\end{proof}

\section{Fixed representatives and complete normal forms}
\label{sec:normal}
Throughout the rest of the paper put $\mathcal V=\V(R_{12})$. The notation $\mathcal V[E]$ means the subvariety obtained by adjoining the identities in $E$. The symbols
\[
C(t,u)=t+tu,\qquad D(t,u)=t+u+tu,\qquad L(a,b,c,d)=ab+bc+cd
\]
are abbreviations for terms. A superscript or subscript $D$ will indicate a specified double-marked context, rather than a new operation.

Let $M(v)$ be the variables occurring as linear summands of a term $v$, let $G(v)$ be its quadratic graph, and put $W(v)=M(v)\cap V(G(v))$. A component containing an edge has type U if it has no marked vertex, type S if only one bipartition class is marked, and type D if both classes are marked. In an S-component we call the marked class E and the other class O. Linear summands outside the quadratic graph are retained separately. Write $d(v)$ when the graph has a D-component.

The normal form supplied by Theorem~\ref{thm:criterion}(d) has two steps. A basic-zero term becomes $0$; otherwise, we add all edges between oppositely placed marked vertices in the same component. No other edge is added. In particular, a D-component is not in general a complete bipartite graph. It does, however, contain a double-marked edge, so its term absorbs a copy of $D(t,u)$.

We shall use the fixed set $\F_{25}$ in Table~\ref{tab:F25}. The purpose of the present section is to decide identities for a specified presentation. The assertion that these presentations exhaust all subvarieties is a separate reduction theorem proved in Section~\ref{sec:reduction}.

{\small
\setlength{\tabcolsep}{7pt}
\renewcommand{\arraystretch}{1.22}
\begin{longtable}{c@{\quad}l}
\caption{The fixed representative identities $\F_{25}$.}\label{tab:F25}\\
\toprule
Name & Identity or absorption inequality\\
\midrule
\endfirsthead
\multicolumn{2}{c}{Table \thetable\ (continued)}\\
\toprule Name & Identity or absorption inequality\\ \midrule
\endhead
\bottomrule
\endfoot
$\rho$ & $x\preceq xy$\\
$\eta$ & $z\preceq x+xy+yz$\\
$\omega$ & $y\preceq x+xy$\\
$\zeta$ & $z\preceq C(x,y)+zt$\\
$\lambda$ & $z\preceq D(x,y)+xz$\\
$\eta_D$ & $z\preceq x+xy+yz+D(t,u)$\\
$\omega_D$ & $y\preceq C(x,y)+D(t,u)$\\
$\zeta_D$ & $z\preceq D(x,y)+zt$\\
$\mu$ & $ad\preceq b+L(a,b,c,d)$\\
$\nu$ & $ad\preceq C(t,u)+L(a,b,c,d)$\\
$\mu_2$ & $ad\preceq b+c+L(a,b,c,d)$\\
$\mu_D$ & $ad\preceq b+L(a,b,c,d)+D(t,u)$\\
$\nu_D$ & $ad\preceq L(a,b,c,d)+D(t,u)$\\
$\delta$ & $ad\preceq L(a,b,c,d)$\\
$\xi$ & $x^2\approx D(x,y)$\\
$\beta$ & $xz\preceq C(x,y)+C(t,z)$\\
$s$ & $x^2\approx C(x,y)$\\
$\chi$ & $x^2\approx xy$\\
$\tau$ & $x\approx y$\\
$\alpha$ & $ad\preceq a+L(a,b,c,d)$\\
$\alpha_D$ & $ad\preceq a+L(a,b,c,d)+D(t,u)$\\
$\varepsilon$ & $ad\preceq a+b+c+L(a,b,c,d)$\\
$\kappa$ & $ad\preceq a+b+L(a,b,c,d)$\\
$\iota_2$ & $x_0x_5\preceq x_1+\sum_{j=0}^{4}x_jx_{j+1}$\\
$\iota_2^D$ & $x_0x_5\preceq x_1+\sum_{j=0}^{4}x_jx_{j+1}+D(t,u)$\\
\end{longtable}
}

The square identities in the table may be read as $0\preceq v$, since the converse inequality already holds. The identity $\tau$ may be read as $y\preceq x$. All variables displayed in an independent context are distinct from the path variables, but substitutions are permitted to identify them.

For $n\ge1$ define
\begin{align}
\gamma_n:\quad x_0y&\preceq
x_0+x_{2n}+\sum_{j=0}^{2n-1}x_jx_{j+1}+x_{2n}y,\label{eq:gamma}\\
\gamma_n^D:\quad x_0y&\preceq
x_0+x_{2n}+\sum_{j=0}^{2n-1}x_jx_{j+1}+x_{2n}y+D(t,u).\label{eq:gammaD}
\end{align}
For $H\subseteq\F_{25}$ and $0\le p\le q\le\infty$ put
\begin{equation}
\mathcal U(H,p,q)=
\mathcal V[H,\gamma_i:1\le i\le p,\gamma_j^D:1\le j\le q].
\label{eq:presentation}
\end{equation}
The threshold $0$ specifies an empty family and $\infty$ specifies the whole family.

\subsection{The closure of a term}
For every presentation \eqref{eq:presentation}, define the closure of a term as follows; no logical closure of $H$ is assumed. If $\tau\in H$, all terms have the same normal form. Otherwise, take the least extension of the environment normal form of $v$ that is closed under the following rules.
\begin{enumerate}[label=(\arabic*),leftmargin=2.2em]
\item For each $f=(q_f\preceq v_f)\in H$, for every map of the variables of $v_f$ into the variables of the term, allowing identifications, include the image of $q_f$ whenever all image summands of $v_f$ are present. A square query is interpreted as $0$, and its insertion sends the result to $0$.
\item Set $k=p$ if there is no D-component and $k=q$ otherwise. For each two marked vertices $a,b$ in the same bipartition class with distance at most $2k$, join $a$ to every neighbour of $b$. For $k=\infty$, require only that $a,b$ lie in the same component and class.
\item Reapply the environment rule. A square or odd cycle gives $0$. Otherwise, add all edges between oppositely placed marks in the same component.
\end{enumerate}
The condition $d(v)$ is evaluated on the enlarged term at each application of a rule. No rule introduces a variable. On $n$ variables there are at most $n+\binom n2$ eligible linear and square-free quadratic monomials. Consequently the least closed extension is obtained after finitely many strict enlargements, even when a threshold is infinite. Denote it by $C_{H,p,q}(v)$.

The second step expresses every vertex instance of the path families. An even walk may be shortened to a path; a shorter positive even path may be padded by an out-and-back step to any larger prescribed even length. If the two marked vertices coincide, the queried edge is already present. Thus the distance version gives precisely the same saturation as all permitted path instances.

\begin{theorem}[complete normal forms]\label{thm:normal}
For arbitrary nonempty terms $u,v$,
\[
\mathcal U(H,p,q)\models u\approx v
\quad\Longleftrightarrow\quad
C_{H,p,q}(u)=C_{H,p,q}(v).
\]
For a monomial $w$, this is equivalently
\[
\mathcal U(H,p,q)\models w\preceq v
\quad\Longleftrightarrow\quad
C_{H,p,q}(v)=0\ \text{or}\ w\in C_{H,p,q}(v).
\]
\end{theorem}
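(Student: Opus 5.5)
Both equivalences will follow from two inclusions: a soundness statement and a completeness statement for the closure.

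\emph{Soundness.} Every monomial inserted by $C_{H,p,q}$ is derivable from $v$ in $\mathcal U(H,p,q)$. We check this rule by rule.
\begin{itemize}
\item For rule (1), an instance of $f\in H$ under a substitution sending variables to variables gives exactly the stated absorption; a square query yields $0$ via the square identities of $\mathcal V$.
\item For rule (2), the remark following the closure definition reduces every distance-$\le 2k$ pair to a genuine instance of $\gamma_i$ or $\gamma_j^D$. We shorten walks to paths and pad by out-and-back steps. When $d(v)$ holds, the required $D(t,u)$ is obtained by mapping $t,u$ onto a double-marked edge of the current term.
\item For rule (3), the environment normal form is valid in $R_{12}$ by Theorem~\ref{thm:criterion}(d) and Theorem~\ref{thm:bases}.
\end{itemize}
Hence $v\approx C_{H,p,q}(v)$ holds in $\mathcal U(H,p,q)$. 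So equal closures give $u\approx v$, and $w\in C(v)$ or $C(v)=0$ gives $w\preceq v$.

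\emph{Completeness.} This is the converse and is the main work. For a term $v$ with $C=C_{H,p,q}(v)\neq 0$, I would build a model of $\mathcal U(H,p,q)$ that separates $C$ from each monomial $w\notin C$. The natural candidate is the relatively free algebra itself, realized as closed-term algebra. We take the set of closed nonzero polynomials $C_{H,p,q}(\cdot)$, together with $0$, on the relevant variables. Addition is defined by $X\oplus Y=C(X+Y)$ and multiplication by $X\odot Y=C(XY)$.

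\emph{Step 1.} Show these operations are well defined on closures, i.e.\ $C(C(X)+Y)=C(X+Y)$ and $C(C(X)Y)=C(XY)$. The additive case follows from monotonicity and idempotence of the least closed extension. For the multiplicative case, note that nonzero products of nonzero polynomials are only linear$\times$linear. So $C(X)\cdot Y$ is either $0$ or a graph read off from linear parts. The linear parts of $C(X)$ and $X$ differ only by monomials whose products are forced by rules already available. I expect to check this case by case, using that every rule is stable under substitution of variables.

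\emph{Step 2.} Show this algebra satisfies $\mathcal V$ together with $H$, $\gamma_i$ for $i\le p$ and $\gamma_j^D$ for $j\le q$. Validity of an absorption $q_f\preceq v_f$ under an arbitrary polynomial substitution must be reduced to variable substitutions. Here I would use the finite-support remark after Theorem~\ref{thm:criterion}: a nonzero image of a quadratic term uses only degree-one summands of the substituted polynomials. So any nonzero instance factors through a map of variables into support variables, where rule (1) or (2) already applies. The condition $d(\cdot)$ is monotone under enlargement, so the $D$-conditioned identities also transfer.

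\emph{Step 3.} In this algebra, the assignment $x\mapsto C(x)$ evaluates $v$ to $C(v)$, and $w$ to $C(w)$. If $w\notin C(v)\neq0$, then $C(v)\oplus C(w)\neq C(v)$, so $w\preceq v$ fails. This gives the monomial criterion. The identity criterion follows by decomposing $u\approx v$ into monomial absorptions, as in the proof of Theorem~\ref{thm:bases}.

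\emph{Main obstacle.} The hard part is Step 2, and specifically the interaction of the threshold $k$ with $d$. A substitution can create a D-component only in the image. The $\gamma^D$ rule with the larger threshold $q$ then becomes active, and we must verify that closure is computed consistently, i.e.\ that $d$ evaluated on the enlarged term is the right condition. I would handle this by proving a lemma that $C$ commutes with variable renamings and identifications (a homomorphism property of the closure). The $\tau$ case is trivial and is treated separately.
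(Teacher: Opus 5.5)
Your route is the paper's. Soundness holds because every insertion is an instance of a defining law with the remaining summands as context. Completeness uses the finite algebra of closed sets of linear and square-free quadratic monomials on $X$, with $A+B=C_{H,p,q}(A\cup B)$ and $AB=C_{H,p,q}(A\cdot B)$, together with the support argument and evaluation at singletons. However, the multiplicative half of your Step 1 is not established, and your sketched justification points the wrong way.

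The claim that the linear parts of $C(X)$ and $X$ differ only by monomials ``whose products are forced by rules already available'' is not true. Take $\rho\in H$ and $X=xy$: then $C(X)$ gains $x,y$, and nothing forces the products $xz,yz$. The idea you need is that no rule fires on a purely linear term. Every right side $v_f$ with $f\neq\tau$ contains an edge, and rules (2) and (3) need an edge. Hence $C(X)\neq X$ forces $X$ to contain a monomial of degree at least two. Then $XY$ contains a monomial of degree at least three, and so does $C(X)Y$ unless $C(X)=0$. In every case both sides are $0$, which gives $C(C(X)Y)=C(XY)$ at once. You should also record $A^2=ABC=0$, since that is what yields $x\preceq y^2$ and $x^2\approx yzt$ in the algebra.

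Your ``main obstacle'' is illusory, and the lemma you propose for it is false for identifications. Take $H=\varnothing$, $p=q=0$, $v=x+xy+yz+zw+t$ and $\varphi(t)=w$. Then $C(v)=v$, but $\varphi(v)$ has opposite-part marks $x,w$ in one component, so $C(\varphi(v))=\varphi(v)+xw\neq\varphi(C(v))$. Only the inclusion $\varphi(C(v))\subseteq C(\varphi(v))$ (or $C(\varphi(v))=0$) holds.

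No such lemma is needed. For an instance of $\gamma_j^D$ with $j\le q$, let $R$ be the polynomial image of its right side. If some variable image has a monomial of degree at least two, $R$ contains a cubic monomial and $C(R)=0$; otherwise all images are linear. A chosen support-vertex map sends $t+u+tu$ into $R$, so $R$ already has a D-component (or a square, giving $0$). Hence rule (2) runs with $k=q\ge j$ on $R$ and on every enlargement, and it inserts the selected query monomial. For $\gamma_i$ with $i\le p$, either threshold is at least $i$ because $p\le q$. This is exactly the paper's direct support argument, and your remark that $d$ is monotone under enlargement already contains what is required.
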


\begin{proof}
If $\tau\in H$, the presentation defines the trivial variety and the assertion holds with a single normal form. Assume henceforth that $\tau\notin H$. Each insertion is an instance of a defining identity with the remaining summands added as context. Hence $v\approx C_{H,p,q}(v)$ is valid. It remains to prove that polynomial substitutions yield no further identifications.

Apart from $\tau$, each variable in a fixed right-hand representative occurs in a quadratic summand. The same is true of all path representatives. If the image of such a variable has a monomial of degree at least two, one of those products has degree at least three, and the whole right side is $0$. Therefore a nonzero right side requires every variable image to be a nonempty sum of linear variables. Given any monomial of the substituted query, select the required vertices from these supports and select arbitrary support vertices for the remaining variables. All right-hand image monomials of this vertex substitution are in the polynomial expansion. The corresponding saturation step inserts the chosen query monomial. For a square query, choosing the same support vertex twice inserts a square. The argument also proves validity of all $\theta_n$ instances; an odd closed walk contains an odd cycle, giving all $\sigma_n$ instances.

To turn this observation into a separating algebra, fix a nonempty finite set $X$ containing the variables in $u,v$. Let $B_X(H,p,q)$ consist of all nonzero closed, nonempty subsets of the linear and square-free quadratic monomials on $X$, together with one element $0$. The latter represents the common square and is greatest for addition and absorbing for multiplication. Define
\[
A+B=C_{H,p,q}(A\cup B),\qquad
A B=C_{H,p,q}(A\cdot B).
\]
Here $A\cdot B$ denotes the ordinary polynomial product, expanded as a set of commutative monomials. The closure is extensive, monotone and idempotent (with $0$ as top), so addition is a semilattice operation. A nontrivial insertion can only occur in a term with a quadratic monomial. Its product with any nonempty term already contains a cubic monomial and is $0$, before and after insertion. Purely linear terms are unchanged. Consequently
\[
C_{H,p,q}(C_{H,p,q}(A)B)=C_{H,p,q}(AB),
\]
and likewise in the other factor. Associativity and distributivity follow from the polynomial operations. Every square of a nonempty polynomial contains a square or a monomial of degree at least three, and every product of three nonempty polynomials has degree at least three. Thus $A^2=ABC=0$ in this algebra. Since $0$ is the additive maximum, the two basic identities $x\preceq y^2$ and $x^2\approx yzt$ hold. The support argument proves all remaining defining identities, so the algebra belongs to $\mathcal U(H,p,q)$.

Assigning each variable to its singleton linear term evaluates every term to its closure. Distinct closures thus refute the queried identity. In particular, a missing query is separated by an explicitly defined finite algebra belonging to the specified variety. This algebra has at most $2^{|X|+\binom{|X|}{2}}$ elements: there are at most $2^N-1$ nonempty subsets of the $N=|X|+\binom{|X|}{2}$ monomials, and one additional zero.
\end{proof}

\begin{remark}\label{rem:context}
If the original right side absorbs $D_0=t_0+u_0+t_0u_0$, every local use of an unconditioned rule can be replaced by the corresponding D-rule with this same $D_0$. All edge insertions used below preserve that context. Thus a conditional graph derivation is valid without assuming that its unconditioned identity holds throughout the variety. This observation will be used explicitly in the reduction proof.
\end{remark}
\section{Reduction of arbitrary identities}
\label{sec:reduction}

We now prove the assertion needed to pass from specified presentations to all subvarieties. All equivalences in this section are relative to $\mathcal V=\V(R_{12})$, unless another environment is explicitly stated. In particular, equivalence means equality of the defined subvarieties.

\begin{theorem}[global reduction]\label{thm:reduction}
For every identity $u\approx v$ there is a finite set
\[
E\subseteq\F_{25}\cup\{\gamma_n,\gamma_n^D:n\ge1\}
\]
such that $\mathcal V[u\approx v]=\mathcal V[E]$.
\end{theorem}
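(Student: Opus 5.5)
First reduce $u\approx v$ to finitely many absorption inequalities $w\preceq v$ and $w\preceq u$, one for each monomial $w$ of the other side. The identity is then equivalent, modulo $\mathcal V$, to the conjunction of these. So it suffices to treat a single inequality $w\preceq v$ that fails in $R_{12}$, and to show it is equivalent to a finite subset of $\F_{25}\cup\{\gamma_n,\gammaD_n\}$; taking unions then gives $E$. If $v$ is basic-zero, the inequality is trivial and $E=\varnothing$. If $w$ is a square or of degree at least three, the inequality says that $v$ is $0$. Such inequalities will be handled by the square rows $\xi,s,\chi$ and by $\tau$, according to whether $v$ has a D-component, an S-component, only unmarked edges, or is purely linear. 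For example, $x^2\preceq v$ with $v$ purely linear on two or more variables gives $\tau$. Otherwise, replace $v$ by its $R_{12}$ normal form from Theorem~\ref{thm:criterion}(d). The query $w$ is then a linear variable or an edge not absorbed in $R_{12}$.

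Next, classify the query by the local configuration of $w$ relative to $G(v)$ and $W(v)$:
\begin{enumerate}[label=(\roman*),leftmargin=2.2em]
\item whether $w$ is a fresh variable, an isolated linear summand, or a graph vertex, and in the last case its component type U, S or D and its side E or O;
\item for edge queries, the parity, the distance, and the marking status of the endpoints;
\item whether $v$ contains some D-component anywhere.
\end{enumerate}
For each configuration I would derive, in both directions, an equivalence with one table entry. For the direction ``inequality $\Rightarrow$ representative'' I apply a suitable substitution: identify variables of $v$, collapse each component onto a short path or a single edge, and turn irrelevant components into the independent context $C(t,u)$ or $D(t,u)$. For the converse I embed $v$ into an instance of the representative, using the closure rules of Theorem~\ref{thm:normal} and Remark~\ref{rem:context} to keep the D-context. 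The superscript-$D$ variants arise exactly when $v$ has a D-component not touched by $w$. For a linear query $z$ on an unmarked vertex, the candidates are $\rho,\eta,\omega,\zeta,\lambda$ and their $D$ forms. For an edge query with an odd path $a-b-c-d$, the candidates are $\mu,\nu,\mu_2,\delta,\alpha,\varepsilon,\kappa$ and their $D$ forms, chosen by which of the interior and end vertices are marked. Marked-to-marked queries at the next distance give $\iota_2$ and $\iota_2^D$, and those at arbitrary distance give $\gamma_n$ and $\gammaD_n$.

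The key technical lemma is \emph{monotonicity in the distance parameter}. Suppose $w=x_0y$, where $x_0$ and $x_{2n}$ are marked in the same class at distance $2n$ and $x_{2n}$ is adjacent to $y$. I claim that $w\preceq v$ is then equivalent to a finite set consisting of $\gamma_n$ (or $\gammaD_n$) together with fixed entries accounting for extra marks and context. Since $\gamma_n$ implies $\gamma_m$ for $m\le n$ by the padding argument, the indices form an initial segment. The reverse implication comes from a substitution that folds $v$ onto the path of length $2n$. Folding must not shorten the distance between the two marks; here I would use the path-distance invariant, namely that homomorphisms do not increase distance and that a retraction onto a geodesic preserves it.

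The main obstacle is the casework ensuring that each substitution stays inside the intended case. Collapsing extra structure in $v$ may create new marks, D-components, or odd paths, and this can make the derived inequality strictly stronger or weaker than the target. I would control this by always folding onto a shortest path, or onto a shortest marked-to-marked geodesic, and by checking each collapsed term against Theorem~\ref{thm:normal}. Finitely many residual interactions, where several marks lie near the query, would be verified directly with the closure operator, since only bounded neighbourhoods of the query matter.
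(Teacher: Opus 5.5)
Your overall strategy (split into monomial absorptions, normalize, extract a representative by folding, recover the query by closure) is the same as the paper's. The gap is that the key lemma you single out is false as stated, and that lemma is exactly where the difficulty lies. For a query $x_0y$ with $x_0$ marked, the correct index is not the distance from $x_0$ to a marked neighbour of $y$. It is a bottleneck quantity: the least $h$ for which $x_0$ and some marked neighbour of $y$ are linked by a chain of marks with consecutive distances at most $2h$ (the $h$-groups of Lemma~\ref{lem:groups}).

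Take the path $m_0-a_1-m_2-a_3-m_4-y$ with marks $m_0,m_2,m_4$. The query $m_0y$ follows from two applications of $\gamma_1$. Conversely, the substitution $a_1,a_3\mapsto x_1$, $m_2,m_4\mapsto x_2$ turns it into $\gamma_1$. So it is equivalent to $\{\gamma_1\}$, whereas your lemma gives $n=2$. No set containing $\gamma_2$ is equivalent to $\gamma_1$, since $\mathcal V[\gamma_1]\supsetneq\mathcal V[\gamma_2]$. The example also shows why folding onto a geodesic between two marks cannot extract $\gamma_n$: the other marks land in the interior of the target path and weaken the extracted law. Adding ``fixed entries'' can only strengthen a set, so it cannot compensate.

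The missing idea is the paper's fold. Take the whole $(h-1)$-group $J$ of $x_0$; every other mark is at distance at least $2h$ from $J$. Then map by the truncated distance to the set $J$, namely $f(z)=\min\{d_G(z,J),2h+(d_G(z,J)\bmod 2)\}$, which sends $J$ to $0$ and every other mark to $2h$. One further use of $\gamma_1$ finishes when the query lands on $(0,2h-1)$. For the same reason, your fallback that ``only bounded neighbourhoods of the query matter'' is false: $h$ is determined by chains of marks of unbounded length.

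Two configurations are also missing or misdescribed in your case list.

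\emph{Unmarked endpoints in one S-component.} Such a query is not settled by a local three-edge law. You must first extract $\mu$ (only $\mu_D$ when $v$ has a D-component, with later $\mu$-steps performed against a retained $D_0$). Then pass to the $\mu$-closure, which fills $L_J\times N_J$ for every group. Only then do you choose among $\gamma_h$ with $h\ge2$, $\iota_2$ (when $y\in N_G(W)$ but $x\notin N_G(N_G(W))$ in the closed graph), and $\alpha$ (when $y\notin N_G(W)$). Your description of $\iota_2$ as a ``marked-to-marked query at the next distance'' is therefore wrong. Marked vertices in opposite classes are already joined by the environment. The query of $\iota_2$ joins the neighbour $x_0$ of the mark $x_1$ to the unmarked vertex $x_5$ of the marked class.

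\emph{Cross-component E--O queries.} A query from the E part of one S-component to the O part of another, with no D-component present, is not a zero case. It extracts $\beta$, and is equivalent to $\beta$ together with some of $\alpha,\mu,\iota_2$ (Lemma~\ref{lem:crossfold}). The law $\beta$ appears nowhere in your plan. Routing this case to $\xi,s,\chi$ would give the wrong answer, since $R_{01}$ satisfies $\beta$ but not $s$.

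Finally, several cases are equivalent to genuine sets rather than to ``one table entry''. For instance, unmarked endpoints in a D-component, one of which has no marked neighbour, give $\{\mu_2,\kappa\}$. The theorem allows this, but your plan does not anticipate it.
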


We prove the theorem by giving a complete reduction of a monomial absorption $q\preceq v$. First replace $v$ by its environment normal form. An absorption with zero right side or an already present query needs no representative. Henceforth the right side is nonzero and the query is absent. When the graph has an edge, put
\[
Z(v)=\begin{cases}
\xi,&d(v),\\
s,&\neg d(v),\ W(v)\ne\varnothing,\\
\chi,&W(v)=\varnothing.
\end{cases}
\]
The labels U, S, D and the oriented parts E, O have the meanings fixed in Section~\ref{sec:normal}. A graph neighbourhood never includes isolated linear summands.

\subsection{Elementary foldings}

\begin{lemma}\label{lem:elementaryfolds}
The reductions in Table~\ref{tab:elementaryfolds} hold. In the last row, the only excluded cross-component case is a query from the E part of one S-component to the O part of another S-component when $d(v)$ is false.
\end{lemma}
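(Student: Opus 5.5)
Table~\ref{tab:elementaryfolds} is not reproduced above, so the plan is stated for its evident shape. Each row pairs an absorption $q\preceq v$, with $v$ in environment normal form, the query absent and $v\ne0$, with a fixed representative drawn from $\F_{25}$ together with $Z(v)$. The claim is that $\mathcal V[q\preceq v]$ equals $\mathcal V$ extended by that representative. Each row needs two inclusions, and I would prove them separately in a uniform way.

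\textbf{Representative implies the inequality.} Here we must derive $q\preceq v$ from the representative. I would use Theorem~\ref{thm:normal} for the relevant presentation and check that $q\in C_{H,p,q}(v)$. Concretely, I exhibit a variable map from the representative's right side into $v$. The map sends the displayed path $L(a,b,c,d)$, the context $C(x,y)$ or $D(t,u)$, and the marks onto the corresponding features of $v$, with identifications where needed. Two devices extend this to the general case. First, the factor $Z(v)$ supplies the square, so that degenerate images become $0$ consistently. Second, Remark~\ref{rem:context} lets a $D$-variant act whenever $d(v)$ holds.

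\textbf{Inequality implies representative.} Here we must show that $q\preceq v$ yields the representative. I would substitute into $q\preceq v$ itself, collapsing $v$ onto the representative's right side. Every vertex of the same component and class as the relevant endpoints is sent to the corresponding path variable, so distance parity and markedness are preserved. Each unneeded component is sent to a context block: a U-component to $xy$, which is absorbed by $\chi$ or $C$; an S-component to $C(x,y)$; a D-component to $D(t,u)$. Isolated linear summands go to an existing linear variable of the context.

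One point must be checked in this collapse: the image of $v$ should contain no monomial beyond the target's right side, up to the environment normal form of $\V(R_{12})$. This holds because the map is a graph homomorphism onto the small target graph, and it respects markedness because marked vertices go to marked variables. After the substitution, $q$ becomes the target query, which gives the representative. The type of the target is determined by three features: the kind of query (linear or quadratic), whether the endpoints are marked, and which kinds of components are present. Assigning each case to a row by these features gives the case split.

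\textbf{The cross-component row.} This row is where I expect the main difficulty. Folding two different components onto one pattern must not create a new odd cycle. It also must not add an edge that Theorem~\ref{thm:criterion}(d) would then absorb, since that would change the query's status.

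The remedy I would use is to fold each component separately onto its own copy of $C(x,y)$ or $D(t,u)$, orienting the components independently. The query then becomes $xz$ with $x$ and $z$ in different blocks, which is $\beta$ or its D-variant. The excluded case has the query running from the E part of one S-component to the O part of another, with $d(v)$ false. There the only available orientation would force the image onto a marked--unmarked edge. By the countervaluation in the proof of Theorem~\ref{thm:criterion} for $R_{12}$, that edge is not derivable, so this case is correctly omitted from the table and must be handled later.
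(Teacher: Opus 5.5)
There is a genuine gap, located in the row you singled out. In Table~\ref{tab:elementaryfolds} the last row covers a new graph-external linear variable, a square, a higher monomial, an edge through a graph-external variable, a same-part edge, and every cross-component edge except the excluded one. Its representative is the \emph{zero law} $Z(v)\in\{\chi,s,\xi\}$, not $\beta$; there is no D-variant of $\beta$ in $\F_{25}$ at all.

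The idea you are missing is to fold so that the two query endpoints land on the \emph{same} variable. Orient U-components freely. If $\neg d(v)$, send every marked part of an S-component to $x$ and its opposite part to $y$. If $d(v)$, both orientations are allowed and everything folds into $D(x,y)$. The query then becomes a square, i.e.\ $0$, while the right side becomes $xy$, $C(x,y)$ or $D(x,y)$. (A fresh query variable is replaced by a square, and an isolated linear variable by the existing product $xy$.) This extracts $\chi$, $s$ or $\xi$. Conversely, that law makes an edge, marked edge or double-marked edge already present in $v$ equal to $0$, so $v=0$.

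The only case where the endpoints cannot be identified without weakening the context to $D(x,y)$ is an E--O query between two S-components with $\neg d(v)$. That is the real reason for the exclusion. It is not non-derivability in $R_{12}$: no cross-component edge is absorbed there, excluded or not. Your separate-block fold is in fact the argument of Lemma~\ref{lem:crossfold} for the excluded case. On the remaining cases it yields laws such as $xz\preceq C(x,y)+C(z,t)$ for an E--E query. That law is not $\beta$, and it becomes $s$ only after the further identification $z\mapsto x$, $t\mapsto y$. Since $\beta$ is strictly weaker than $s$ ($R_{01}$ satisfies $\beta$ but fails $s$), naming $\beta$ as the representative makes the stated equivalence false. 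Your worry about creating odd cycles is beside the point: every substitution instance is a valid consequence, and the only question is whether the extracted law is strong enough.

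The generic part of your plan also has defects that affect the other rows.
\begin{itemize}
\item You treat $Z(v)$ as available in every row to make degenerate images vanish. It is the representative of the last row only, and adjoining it elsewhere collapses the equivalences: for instance $\mathcal V[\eta,s]=\mathcal V[s]\ne\mathcal V[\eta]$.
\item Sending isolated linear summands to a linear variable of the target fails for $\rho$, $\delta$ and $\chi$, whose right sides contain no linear variable. The paper instead substitutes the existing product $xy$, so the summand is absorbed into an edge already on the right.
\item The direction from the representative to $q\preceq v$ is not a single variable map. For $\eta$, $\omega$ and $\lambda$, marks must be propagated iteratively: in two-edge steps, along a path, or through a whole D-component. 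For $\delta$ and $\nu$, an odd path between the query endpoints must be shortened by two edges repeatedly. Each step retains the same, possibly double-marked, context as in Remark~\ref{rem:context}.
\item The edgeless case, where collapsing the variables yields $\tau$, is not treated.
\end{itemize}
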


\begin{table}[ht]
\centering\small
\caption{Linear, unmarked-component and zero reductions.}
\label{tab:elementaryfolds}
\begin{tabular}{@{}p{0.56\textwidth}p{0.24\textwidth}p{0.10\textwidth}@{}}
\toprule
Absent query & $\neg d(v)$ & $d(v)$\\
\midrule
Linear variable in the E part of an S-component & $\eta$ & $\eta_D$\\
Linear variable in its O part & $\omega$ & $\omega_D$\\
Linear variable in a D-component & --- & $\lambda$\\
Linear variable in a U-component & $\rho$ if $W=\varnothing$; $\zeta$ otherwise & $\zeta_D$\\
Missing opposite-part edge in a U-component & $\delta$ if $W=\varnothing$; $\nu$ otherwise & $\nu_D$\\
New graph-external linear variable; square; higher monomial; edge involving a graph-external variable; same-part edge; remaining cross-component edge & $Z(v)$ & $Z(v)$\\
\bottomrule
\end{tabular}
\end{table}

\begin{proof}
For a new linear query $z$ in the E part, map $z$ to itself, all other E vertices of its component to $x$, and all O vertices to $y$. The image right side is contained in $x+xy+yz$. Other S-components fold into the marked edge $x+xy$, and U-components into $xy$. Isolated linear variables may be replaced by $xy$. Thus the original absorption implies $\eta$. If D-components are present, map all of them to a separate $D(t,u)$, giving $\eta_D$. Conversely, repeated two-edge propagation of a mark gives any vertex in the E part, retaining the same D-context when necessary. For an O-part linear query, folding the two parts to $x,y$ gives $\omega$ or $\omega_D$; successive applications propagate marks along a path.

For a query $z$ in a D-component, map $z$ to $z$, the other vertices in its part to $y$, and the opposite part to $x$. All terms map into $x+y+xy+xz$, so the original law implies $\lambda$. Conversely, an adjacent marked pair exists by the environment rule. The law $\lambda$ marks every neighbour of a marked vertex having a marked neighbour, and connectedness propagates this to the whole component.

A U-component folds to an ordinary edge. Keeping a queried linear endpoint gives $\rho$; an additional marked edge or double-marked edge gives $\zeta$ or $\zeta_D$. Conversely, apply the appropriate representative to an edge incident with the query and to the indicated context. For a missing opposite-part edge $ad$ in a U-component, preserve its endpoints and send the other vertices in their two parts to $c,b$, respectively. No original edge maps to $ad$, so all edges map into $ab+bc+cd$. Fold other components to an ordinary, marked or double-marked edge according to the global context. This extracts $\delta,\nu$ or $\nu_D$. In the reverse direction, repeatedly shorten an odd path between the query endpoints by two edges, retaining the chosen context.

For the last row, orient each U-component freely. When there is no D-component, send all marked parts of S-components to $x$ and their opposite parts to $y$. Except for the excluded S--S E/O query, the two endpoints of a same-part or cross-component query can be sent to the same variable. With a D-component, either orientation is permitted and all components fold into $D(x,y)$. A fresh graph-external query variable may be replaced by a square; an isolated linear variable occurring in a quadratic query may be replaced by $xy$, making that query cubic while keeping the right side unchanged after normalization. Squares and higher queries already equal $0$. We obtain $0\preceq xy$, $0\preceq C(x,y)$ or $0\preceq D(x,y)$, namely $\chi,s$ or $\xi$. Conversely, the appropriate representative makes an edge, a marked edge or a double-marked edge already absorbed by $v$ equal to $0$, and therefore makes $v=0$.

If the graph has no edges, a missing linear query is fresh, and collapsing the displayed variables gives $y\preceq x$. For a query of degree at least two, collapse all variables to $x$ to obtain $0\preceq x$. Either law forces the trivial variety, so the representative is $\tau$.
\end{proof}

\subsection{Two descriptions of path closure}

For a positive integer $k$ and an S-component, join two marks in an auxiliary graph when their distance in the original graph is at most $2k$. Call the auxiliary components the $k$-groups. At $k=\infty$, all marks of the component form one group. For a group $J$ put
\[
N_J=N_G(J),\qquad L_J=N_G(N_J).
\]

\begin{lemma}[group closure]\label{lem:groups}
On an S-component, adjoining $\gamma_k$ to the environment shares the original neighbours of marks in each $k$-group: the added edges are $J\times N_J$. Adjoining $\mu$ as well, with $k\ge1$, adds exactly $L_J\times N_J$ for every $k$-group. No linear term, component or bipartition changes. Under $\mu$, each D-component becomes complete bipartite, and U-components remain unchanged. The conditional versions have the same action on a term containing a fixed D-context.
\end{lemma}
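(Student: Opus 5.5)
The plan is to verify the two claims by a fixed-point argument. First exhibit the asserted edge set as derivable. Then show that the enlarged term is already closed under the relevant rules, so the least closed extension of Section~\ref{sec:normal} adds nothing more. Throughout, fix an S-component with marked class E and other class O. Marks lie only in E, so the last environment step (edges between oppositely placed marks) never fires, and all work is in rule (2) of the closure, together with rule (1) for $\mu$. By the padding remark after the closure definition, the instances of $\gamma_k$ are exactly: for marks $a,b$ at distance at most $2k$, join $a$ to every neighbour of $b$. Every such edge runs from E to O inside the component. Hence no square, odd cycle, linear summand, new component or change of bipartition can arise, and the final sentences about linear terms and bipartitions follow at once.

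\emph{Derivability of $J\times N_J$.} Let $J$ be a $k$-group and $a,b\in J$. Choose a chain $a=m_0,m_1,\dots,m_r=b$ of marks with consecutive original distance at most $2k$. Since edges are only added, the distances never increase. Working backwards along the chain, $m_{r-1}$ acquires $N_G(b)$, then $m_{r-2}$ acquires these edges, and so on. So $a$ becomes adjacent to $N_G(b)$, and hence $J\times N_J$ is contained in the closure.

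\emph{Closedness of $G'=G\cup\bigcup_J J\times N_J$.} This is the step I expect to be the main obstacle: the new edges could a priori bring marks of different groups within distance $2k$. I would handle it by a segment argument. Take a path of length at most $2k$ in $G'$ between marks $m,m'$, and cut it at its internal marks. Every new edge is incident with a mark, so inside a segment between consecutive marks $m_1,m_2$ only the first and last edges can be new. Suppose the first edge $m_1y$ is new, so $y\in N_J$ with $m_1\in J$. Replace $m_1$ by some $b_1\in J$ with $b_1y\in G$, and treat the last edge symmetrically. This gives an original path of the same length between $b_1$ and $b_2$. Thus $b_1,b_2$, hence $m_1,m_2$, lie in one group, and by transitivity so do $m,m'$. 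Consequently the $k$-groups of $G'$ coincide with those of $G$. Moreover, in $G'$ each mark of $J$ has neighbourhood exactly $N_J$, so no further $\gamma_k$ instance applies.

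\emph{Adding $\mu$.} In $ad\preceq b+L(a,b,c,d)$ the vertex $b$ is marked, $a,c\in N(b)$ and $d\in N(c)$. After group sharing, $N(b)=N_J$, so $\mu$ adds $N_J\times N(N_J)$, and $N(N_J)=L_J$ since $J\subseteq L_J$. For closedness, first note that the neighbourhood of a mark stays $N_J$: a mark can receive a $\mu$-edge only as $d\in L_J$, attached to $N_J$. Next, a mark $m'\in L_J$ is adjacent to some $y'\in N_J$, and $y'$ is adjacent to some $b\in J$. So $m'$ is at distance $2$ from $J$, and hence $m'\in J$. Rerunning the segment argument therefore shows that the groups again do not merge. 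Any further $\mu$ or $\gamma_k$ instance then lands inside $L_J\times N_J$.

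\emph{D- and U-components, and the conditional case.} In a D-component, start from an adjacent marked pair $x,y$, which exists by the environment rule. Applying $\mu$ with $b=y$, $a=x$ gives $N(x)\supseteq N(N(y))$, and symmetrically $N(y)\supseteq N(N(x))$. Alternating these and inducting on distance, $x$ becomes adjacent to the whole class opposite to $x$, and $y$ to the whole class opposite to $y$. Applying $\mu$ with $b=x$ (respectively $b=y$), every vertex of the class opposite $x$ is then joined to every vertex at distance two in the opposite class. Iterating along connectedness gives a complete bipartite graph. A U-component has no marked vertex to play the role of $b$, and $\gamma_k$ requires marks, so it is unchanged. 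For the conditional versions, carry the fixed $D(t,u)$ context through each step as in Remark~\ref{rem:context}. Since $d(v)$ holds, rule (2) uses $q$ in place of $p$, and all the arguments above are unchanged.
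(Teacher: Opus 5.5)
Your derivability steps, the segment argument for $\gamma_k$ alone, and the D- and U-component parts are fine and match the paper. Your direct use of $\mu$ with both $a,b$ marked is the paper's $\kappa$ step. The gap is the sentence ``Rerunning the segment argument therefore shows that the groups again do not merge.''

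\textbf{Why the rerun fails.} Your segment argument rested on the fact that every added edge is incident with a mark. Then a mark-to-mark segment with no internal mark has only its two end edges new, and swapping the two end marks for original suppliers gives an original walk of the same length. After $\mu$ this premise is false. The edges of $(L_J\setminus J)\times N_J$ join two unmarked vertices, and such edges can sit in the interior of a segment, possibly several of them. For example, take $m-y_1-x-y_2-m'$ with $m\in J$, $m'\in K$, $x\in L_J\setminus J$, $y_1x$ a $\mu$-edge and $xy_2$ original. No replacement of the two endpoints turns this into an original walk. So closure under $\gamma_k$ after the $\mu$-rectangles is not yet proved.

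\textbf{The paper's repair.} The paper cuts the path at its O-vertices lying in $\bigcup_T N_T$, not at marks.
\begin{enumerate}[label=(\roman*),leftmargin=2.2em]
\item Every added edge has its O-endpoint in some $N_T$ and its E-endpoint in $L_T$. A vertex of $L_T$ has an original neighbour in $N_T$.
\item Neighbours of marks of $J$ are exactly $N_J$, so the labels of these O-vertices start with $J$ and end with $K$.
\item At the first change of label, from $N_{J'}$ to $N_{K'}$, the intervening segment of length $r$ has only original interior edges.
\item Replace its two end O-vertices by original suppliers and attach original marks at both ends. This gives an original walk of length $r+2\le\ell\le 2k$ between $J'\neq K'$, a contradiction.
\end{enumerate}
You need this argument, or an equivalent one, in place of the rerun.

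\textbf{A smaller omission.} You write $N(N_J)=L_J$ ``since $J\subseteq L_J$''. After sharing, however, $N_J$ could a priori also receive edges from $K\times N_K$ for other groups $K$. You need $N_J\cap N_K=\varnothing$ for $J\neq K$, which holds because a common neighbour puts marks of $J$ and $K$ at distance $2\le 2k$. This disjointness ensures that the rectangles for different groups do not interact, so that $N(N_J)$ stays $L_J$ and no further $\mu$-edges appear. It is also what makes the labelling above well defined.
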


\begin{proof}
The distance formulation in Section~\ref{sec:normal} first shares neighbours between marks at distance at most $2k$, and transitivity shares them within a group. A new short path between different groups would have a subpath between marks from different groups with no internal mark. Only its end edges could be new. Replace the end marks by original marks in their respective groups that supplied those edges. This produces an original path of the same length, contradicting the definition of the groups. Thus sharing creates no further groups and is saturated.

The law $\mu$ implies $\gamma_1$: on the path $a-b-c-d$ with $a,c$ marked, apply $\mu$ to the reversed walk $d-c-b-a$. After a group has shared neighbours, if $z\in N_J$ and $x\in L_J$, choose $w\in J$ and $r\in N_J$ with $rx$ an original edge. The walk $z-w-r-x$ has internal mark $w$, so $\mu$ adds $zx$.

For saturation, different $N_J$ are disjoint, since a common neighbour would join their marks at distance two. A marked vertex belongs to $L_J$ only if it is in $J$. After all rectangles $L_J\times N_J$ are filled, the neighbours of marks of $J$ are still exactly $N_J$, and the neighbours of $N_J$ are still exactly $L_J$: every edge newly incident with $N_J$ has its other endpoint in $L_J$, while edges added for $K\ne J$ have their O endpoint in the disjoint set $N_K$. Thus $\mu$ adds nothing further.

Suppose the new graph joined different mark groups by a path of length at most $2k$. Along its O vertices, choose two successive occurrences of distinct neighbour sets $N_J,N_K$, with no intervening O vertex in any $N_T$. The intervening path has only original interior edges; replace its two end O vertices by the original neighbours that supplied its end edges. Attach an original mark at either end. If the intervening O-to-O segment has length $r$ and the assumed marked path has length $\ell$, then $r+2\le\ell$. The replacements and the two original marks give a walk in the original graph of length $r+2$. Hence $d_G(J,K)\le r+2\le\ell\le2k$, contradicting that $J,K$ are distinct $k$-groups. When $k=\infty$, there is only one mark group in the component, so this case does not arise. This proves saturation under the threshold rules as well.

In a D-component the environment connects all opposite-part marks. The law $\mu$ implies $\kappa$ by adding the first mark as context. Fix opposite marks $a,b$. Along a path, use the walks $b-a-v_1-v_2$, $a-b-v_2-v_3$, and so on; $\kappa$ alternately connects $b$ and $a$ to every vertex in the appropriate part. A further application of $\mu$ fills all missing opposite-part edges. The support argument and separating algebra of Theorem~\ref{thm:normal} now prove that the saturated descriptions are complete. A fixed D-context can be kept at each step as in Remark~\ref{rem:context}.
\end{proof}

\subsection{A marked query endpoint}

Suppose that $xy$ is missing, $x$ is marked, and $x,y$ lie in opposite parts of one component. The case where both endpoints are marked is already an environment consequence. If $y$ has a marked neighbour, let $h$ be the least positive threshold for which the group containing $x$ also contains a marked neighbour of $y$.

\begin{lemma}\label{lem:markedfold}
The reductions in Table~\ref{tab:markedfold} are equivalences.
\end{lemma}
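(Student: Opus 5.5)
The plan is to prove each row of Table~\ref{tab:markedfold} as a two-way implication, as in the proof of Lemma~\ref{lem:elementaryfolds}. In the \emph{extraction} direction, $\mathcal V[q\preceq v]\models f$, we give an explicit vertex substitution. It sends the query $xy$ to the query of the representative $f$, and sends every summand of the environment normal form of $v$ into the right side of $f$. In the \emph{derivation} direction, $\mathcal V[f]\models q\preceq v$, we run the closure $C_{H,p,q}$ with $H=\{f\}$, or with the appropriate path thresholds. We then invoke Theorem~\ref{thm:normal}, or the group description of Lemma~\ref{lem:groups}, to see that $xy$ is inserted. Whenever $d(v)$ holds, every D-component is folded onto a separate copy of $D(t,u)$. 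The conditioned representative is then used instead, and Remark~\ref{rem:context} lets us keep this context throughout each derivation.

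The case split follows the position of $y$ relative to the marks, and I expect the table to be organised the same way.
\begin{enumerate}[label=(\roman*),leftmargin=2.2em]
\item \emph{$y$ has no marked neighbour.} Orient the component with $x$ in the marked class $E$. Send $x$ to $a$, the $O$-vertices other than $y$ to $b$, the $E$-vertices other than $x$ to $c$, and $y$ to $d$. Every original edge then lands in $L(a,b,c,d)$ or is $ab$, and no edge lands on $ad$ because $xy$ is missing. The marks land among $a$, $c$, and possibly $b$, which is how the rows $\alpha$, $\kappa$ and $\varepsilon$ arise. Other components fold into ordinary, marked or double-marked edge contexts. In the S-component setting with O-part marks absent, the appropriate row should be $\alpha$ or $\alpha_D$. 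When further marks force more summands, the rows should be $\kappa$ or $\varepsilon$, and the five-edge $\iota_2$ appears when the shortest odd path has an unmarked interior of length five. For the converse, shorten an odd $x$--$y$ path by two edges at a time from the marked end, exactly as for $\alpha$ in Theorem~\ref{thm:bases}.
\item \emph{$y$ has a marked neighbour.} Here the threshold $h$ governs the row.
\end{enumerate}

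For case (ii), the derivation direction is immediate from Lemma~\ref{lem:groups}. Under $\gamma_h$ (respectively $\gamma^D_h$), the group $J$ of $x$ contains a mark $w$ adjacent to $y$, so $y\in N_J$ and the edge $x\times y$ is added. For extraction, fix a marked neighbour $w$ of $y$ lying in the $h$-group of $x$, and a chain of marks from $x$ to $w$ with consecutive distances at most $2h$. Map the vertices onto the path $x_0,\dots,x_{2h}$ by truncated distance from the group $J'$ of $x$ at threshold $h-1$. Vertices at distance $j\le 2h$ go to $x_j$, capped by parity. Send $y$ to the extra vertex $y$, and send the remaining marks to $x_0$ or $x_{2h}$ according to their group. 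Minimality of $h$ is what guarantees that no original edge is sent onto $x_0y$ and that $y$ has no neighbour mapped to $x_0$. This yields $\gamma_h$, or $\gamma_h^D$ with a D-context. If $h$ is undefined because $y$'s marked neighbours lie in other groups at every finite threshold, the same folding gives $\beta$ or $\mu$-type rows, depending on whether those marks are in a different component.

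I expect the main obstacle to be this threshold-extraction step. The folding must be a genuine substitution whose image of the whole right side lies inside the right side of $\gamma_h$. Intermediate marks at distance below $2h$ must not create a shorter marked $x_0$--$x_{2n}$ path in the image that is inconsistent with the target. The difficulty is the unmarked vertices at large distance and the marks of other groups. My first attempt is to layer by distance from the group $J'$ of $x$ at threshold $h-1$. Everything beyond layer $2h$ would collapse onto the last two path vertices by parity, and the argument in the proof of Lemma~\ref{lem:groups}, which replaces end vertices by original suppliers of edges, would rule out an edge onto $x_0y$. Once every row is established, the finiteness of the case split shows that the table is exhaustive for a marked query endpoint.
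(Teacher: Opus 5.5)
Your framework matches the paper's: folding substitutions for extraction, group sharing for derivation, and D-components sent to a separate $D(t,u)$. However, two of the extractions you describe do not produce the table's representatives.

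\textbf{The case where $y$ has no marked neighbour.} You send the other marks of $x$'s class to $c$, together with the neighbours of $y$. As soon as that class contains a second mark, $c$ becomes a marked vertex adjacent to $d$. The extracted law is then $ad\preceq a+c+L(a,b,c,d)$, that is $\gamma_1$, in an S-component, and $\varepsilon$ in a D-component. Both are strictly weaker than $\alpha$ and $\kappa$, and neither gives back the query: when $y$ has no marked neighbour, neither neighbour sharing nor $\varepsilon$ ever inserts an edge at $y$.

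The missing idea is that this same hypothesis lets you send $x$ \emph{and every mark of its class} to $a$. No edge then lands on $ad$. The marks lie only at $a$, giving $\alpha$ or $\alpha_D$, or only at $a,b$, giving $\kappa$. The converse for $\kappa$ needs the alternating argument of Lemma~\ref{lem:groups} started from an adjacent marked pair; marked-end shortening does not work there.

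You also never prove the $\varepsilon$ row, which is a D-component where $y$ has a marked neighbour $c$. Take an opposite mark $b$; the environment supplies $xb,bc$, giving the marked walk $x-b-c-y$. For extraction, keep $x,y$ as $a,d$ and send the rest of their parts to $c,b$.

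\textbf{The threshold extraction.} You send $y$ to the pendant vertex of $\gamma_h$ but place its neighbours by their distance from $J'$. If $d_G(J',y)=2h-1$, a neighbour at distance $2h-2$ lands on $x_{2h-2}$, and $x_{2h-2}y$ is not a summand of the right side of $\gamma_h$. For example, take the path $x-o-e-y-w$ with marks $x,w$. Then $h=2$, $J'=\{x\}$, and $e\mapsto x_2$.

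The paper maps every vertex, including $y$, by $f(z)=\min\{d_G(z,J),\,2h+(d_G(z,J)\bmod 2)\}$ onto the path $0,\dots,2h+1$. The query then lands either at $(0,2h+1)$, which gives $\gamma_h$ directly, or at $(0,2h-1)$. In the second case one first extracts $\gamma_1$, using the folding that keeps $x,y$ as $a,d$ so that marks lie only at $a,c$. One then applies $\gamma_1$ to the walk $0-(2h-1)-(2h)-(2h+1)$ to move the query to $(0,2h+1)$. This preliminary extraction of $\gamma_1$ is the step your plan lacks.

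Finally, $h$ is never undefined in this lemma. Since $x,y$ lie in one S-component, and at $k=\infty$ all of its marks form one group, $h$ is finite. The $\iota_2$, $\beta$ and $\mu$ rows you anticipate belong to Lemmas~\ref{lem:extractmu}, \ref{lem:internalfold} and~\ref{lem:crossfold}, not to this table.
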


\begin{table}[ht]
\centering\small
\caption{A marked endpoint $x$ and an unmarked endpoint $y$.}
\label{tab:markedfold}
\begin{tabular}{@{}lcc@{}}
\toprule
Query component and global context & $y$ has a marked neighbour & $y$ has none\\
\midrule
S-component, $\neg d(v)$ & $\gamma_h$ & $\alpha$\\
S-component, $d(v)$ & $\gamma_h^D$ & $\alpha_D$\\
D-component & $\varepsilon$ & $\kappa$\\
\bottomrule
\end{tabular}
\end{table}

\begin{proof}
In a D-component, if $y$ has marked neighbour $c$, choose an opposite mark $b$. The environment supplies $xb,bc$, so the marked walk $x-b-c-y$ gives the query by $\varepsilon$. Conversely, preserve $x,y$ as $a,d$, send the remaining vertices in their parts to $c,b$, and fold other components into a double-marked edge. This maps the right side into the right side of $\varepsilon$. If $y$ has no marked neighbour, instead send $x$ and every mark in its part to $a$, the other vertices of that part to $c$, $y$ to $d$, and the other part to $b$. No actual edge maps to $ad$. The resulting law is $\kappa$, whose reverse implication is the alternating path argument in Lemma~\ref{lem:groups}.

For an S-component with no marked neighbour at $y$, the latter folding has marks only at $a$, giving $\alpha$, or $\alpha_D$ if other D-components are folded to a separate $D(t,u)$. Repeated marked-end path shortening gives the converse.

Now let the query have finite threshold $h$. Sharing neighbours within $h$-groups proves that $\gamma_h$ implies the query. Conversely, the first folding above, now with marks only at $a,c$, extracts $\gamma_1$. For $h\ge2$, let $J$ be the $(h-1)$-group of $x$. Every outside mark has distance at least $2h$ from $J$, and $d_G(J,y)\ge2h-1$. Define on this component
\begin{equation}\label{eq:distancefold}
f(z)=\min\{d_G(z,J),\ 2h+(d_G(z,J)\bmod2)\}.
\end{equation}
Adjacent vertices have distances differing by one, because $J$ lies in a single bipartition class. Their images remain adjacent on the path $0,1,\ldots,2h+1$. Marks in $J$ map to $0$, and other marks to $2h$. The query maps to $(0,2h-1)$ or $(0,2h+1)$. Add any missing path summands as context. In the second case we have $\gamma_h$ directly. In the first, apply the already extracted $\gamma_1$ to the walk $0-(2h-1)-(2h)-(2h+1)$ to obtain the same result. Other S/U components fold into a marked or ordinary edge and isolated linear summands into an existing product. If D-components are present, retain their common independent D-edge in both extractions and in every reverse step. This gives exactly $\gamma_h^D$.
\end{proof}

\subsection{Unmarked query endpoints}

The next step must precede the use of the $\mu$-closure. It is not legitimate to assume $\mu$ merely because a query has unmarked endpoints.

\begin{lemma}\label{lem:extractmu}
Suppose two unmarked query endpoints are in opposite parts of one S-component, or in the E and O parts of different S-components when $\neg d(v)$. The query implies $\mu$ if $\neg d(v)$ and implies $\mu_D$ in the former case if $d(v)$. In a D-component, a missing edge with two unmarked opposite-part endpoints is equivalent to $\mu_2$ if both endpoints have marked neighbours, and to $\{\mu_2,\kappa\}$ otherwise.
\end{lemma}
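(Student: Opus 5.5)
The proof will be by folding, in the style of Lemmas~\ref{lem:elementaryfolds} and~\ref{lem:markedfold}. Every implication from the query to a representative will be obtained by a vertex substitution that maps variables to variables. Under it, every summand of the right side must land in the right side of the representative, and the query must map onto the representative's query $ad$. The converse implications are built from path-shortening steps, carried out in the environment normal form. We may use the separating algebras of Theorem~\ref{thm:normal} wherever we need to check that nothing beyond the claimed representatives is derived.

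\emph{S-component case.} Let the query be $yz$ with $y$ unmarked in E and $z$ unmarked in O. Assume the environment closure is taken, so the query is absent after adding all mark-to-mark edges. We first choose a shortest path from $y$ to $z$. It has odd length, and we may take it to pass through a mark, or else only through the connectivity of the component. The plan is to send $y\mapsto a$ and $z\mapsto d$. Every other O vertex goes to $b$ and every other E vertex goes to $c$, with one exception: if a marked neighbour configuration is needed, we instead put the marks on the $b$-side.

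This requires care about parity. Marks lie in E, which contains $y$, so marks should go to the E-side image other than $a$, namely $c$. That would give $ad\preceq c+L$, which is $\mu$ after reversal $d,c,b,a$. So the fold is: $y\mapsto a$, $z\mapsto d$, the rest of E (marks included) $\mapsto c$, and the rest of O $\mapsto b$.

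We must check that no actual edge maps onto $ad$. This holds because $yz$ is absent, and the images of all other edges lie in $ab$, $bc$ or $cd$. Other S-components fold into $c+cb$ and U-components into $bc$. Isolated linear summands are replaced by the monomial $bc$, which is legitimate by the cubic-zero argument used in Lemma~\ref{lem:elementaryfolds}. If D-components are present, we fold them into a separate $D(t,u)$, which yields $\mu_D$.

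For the cross-component E/O case under $\neg d(v)$, we send the E-part of $y$'s component with $y\mapsto a$ and its other vertices $\mapsto c$. The O-part of that component goes to $b$. In $z$'s component we send $z\mapsto d$, the rest of O to $b$, and E (marked) to $c$. Again every summand lands in $c+L(a,b,c,d)$.

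\emph{D-component case.} Here both parts contain marks, and the query $yz$ has both endpoints unmarked. Suppose both endpoints have marked neighbours $m_y$ (adjacent to $y$, lying in the part of $z$) and $m_z$. The environment supplies the edge $m_ym_z$, so the walk $y-m_y-m_z-z$ has two marked interior vertices. Then $\mu_2$ yields the query directly. Conversely, we fold $y\mapsto a$ and $z\mapsto d$, all other vertices of $z$'s part (marks included) $\mapsto b$, and all other vertices of $y$'s part $\mapsto c$. The marks then land on $b,c$, and the result is $\mu_2$ with other components folded into the context.

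Otherwise one endpoint, say $y$, has no marked neighbour. The same fold still extracts $\mu_2$. A second fold, sending marks of $z$'s part to $a$ together with $y$, is the one to use, and it yields $\kappa$ only if it keeps the query image away from actual edges. This needs $y$ to have no marked neighbour, and that is exactly our assumption. For the converse we use $\kappa$ along the alternating walks of Lemma~\ref{lem:groups} to join a mark to the far endpoint, which produces a marked neighbour. We then apply $\mu_2$.

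The main obstacle is the second fold in the D-case. Sending marks to $a$ identifies them with $y$, and we must check that no edge from a mark to $z$ exists. If such an edge existed, $z$ would have a marked neighbour. So the $\kappa$-fold must be chosen at whichever endpoint lacks marked neighbours, and the other endpoint is kept as $d$.
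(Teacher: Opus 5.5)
Your folds for $\mu$, $\mu_D$ and $\mu_2$ coincide with the paper's, and so do both converses: $\mu_2$ along $y-m_y-m_z-z$ using the environment edge $m_ym_z$, and $\kappa$ followed by $\mu_2$. The gap is in the extraction of $\kappa$, the step you single out yourself. You resolve it with the roles of the two endpoints reversed.

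As written, your rule sends the endpoint $y$ that lacks marked neighbours to $a$ and keeps the other endpoint at $d$ (``the other endpoint is kept as $d$''). Consider the case where exactly one endpoint has a marked neighbour: $y$ has none, and $z$ has a marked neighbour $m$.
\begin{itemize}
\item If $m$ is sent to $a$ with $y$, as in your last paragraph, then $m$ lies in $y$'s class. The actual edge $mz$ then maps onto the query $ad$, and the extracted absorption is trivial. Your own check in that paragraph shows exactly this.
\item If instead $y$ is identified with the marks of $z$'s class, as in your middle paragraph, those marks lie opposite to $y$. Place the remaining vertices as in your $\mu_2$ fold. The environment edges joining them to the marks of $y$'s class map to $ac$, and the marks of $y$'s class become the linear summand $c$. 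Neither is a summand of $a+b+ab+bc+cd$, so the image does not lie inside the right side of $\kappa$.
\end{itemize}

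The orientation is forced. In $a+b+ab+bc+cd$ the vertex $d$ has no marked neighbour, so the endpoint sent to $d$ must be the one without marked neighbours. Take $y\in L$ lacking marked neighbours and $z\in R$. Use the following fold:
\begin{itemize}
\item send $z$ and all marks of $R$ to $a$;
\item send the other vertices of $R$ to $c$;
\item send $y$ to $d$;
\item send the other vertices of $L$, marks included, to $b$;
\item fold the other components into $D(a,b)$.
\end{itemize}
No actual edge maps to $ad$, because $zy$ is missing and no mark of $R$ is adjacent to $y$. All marks land in $\{a,b\}$, so the extracted law is exactly $\kappa$. This is the paper's fold up to renaming.

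A minor point: sending an isolated linear variable to $bc$ is simply substituting a term for a variable that occurs nowhere else. It is not an instance of the cubic-zero argument.
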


\begin{proof}
Orient the query as $x\in\mathrm E,y\in\mathrm O$. Send $x$ to $a$, $y$ to $d$, all other E vertices to $c$, and all other O vertices to $b$. Orient all S-components by their marked parts. The query is missing and both endpoints are unmarked, so edges map into $ab+bc+cd$ and marks into $c$. Isolated linear summands map to $bc$. The resulting absorption
\[
ad\preceq c+ab+bc+cd
\]
is $\mu$ after reversing the path. In the conditional case fold all D-components into a separate $D(t,u)$, giving $\mu_D$. Choose an actual double-marked edge $D_0\preceq v$. Every subsequent local $\mu$ insertion is then valid using $\mu_D$ with this $D_0$, even though the unconditioned law has not been deduced.

For a D-component with unmarked endpoints $x\in L,y\in R$, send them to $a,d$ and the remaining vertices of $L,R$ to $c,b$. All marks map to $b,c$, giving $\mu_2$. Let $N_L=N_G(W\cap R)$ and $N_R=N_G(W\cap L)$. If $x\in N_L$ and $y\in N_R$, select marked neighbours $b,c$ of $x,y$. The edge $bc$ is an environment consequence, so $\mu_2$ applies to $x-b-c-y$. In fact it fills exactly $N_L\times N_R$, which does not enlarge these two neighbour sets.

Otherwise, interchange the parts if necessary so that $y$ has no marked neighbour. Send $x$ and all marks in $L$ to $a$, the other L vertices to $c$, $y$ to $d$, and the remaining R vertices to $b$. Neither $x$ nor a marked L vertex is adjacent to $y$, so no edge maps to $ad$; the marks map into $a,b$. This extracts $\kappa$. Conversely, $\kappa$ connects each mark to its entire opposite part; then $\mu_2$ fills the whole component. Other components fold into the available double-marked edge throughout.
\end{proof}

Let $v_{\mu}$ be the closure of a right side under $\mu$ and the environment. When only $\mu_D$ has been extracted, the same term is valid as a replacement because $D_0\preceq v$ remains present. If the query is already present in $v_\mu$, the extracted law alone completes the reduction. Otherwise, in an S-component put $N=N_G(W)$ and $L_2=N_G(N)$, now using this closed graph, and orient the remaining missing query as $x\in\mathrm E,y\in\mathrm O$.

\begin{lemma}[remaining S-component queries]\label{lem:internalfold}
In the environment with $\mu$, the three reductions are
\[
\begin{array}{c|c}
\text{condition}&\text{representative}\\\hline
y\in N,\ x\in L_2&\gamma_h\\
y\in N,\ x\notin L_2&\iota_2\\
y\notin N&\alpha,
\end{array}
\]
where $h\ge2$ is the first threshold in Lemma~\ref{lem:groups} that inserts the query. If a fixed D-context is present, all representatives and all local uses of $\mu$ may be replaced by their D-versions.
\end{lemma}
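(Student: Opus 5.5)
The plan is to treat the three rows separately, and in each row to prove two directions in the environment with $\mu$. First, the representative together with $\mu$ and the environment rule inserts $xy$. Second, the absorption $xy\preceq v_\mu$ implies the representative by a folding substitution, in the style of Lemmas~\ref{lem:markedfold} and~\ref{lem:extractmu}. Throughout, $x\in\mathrm E$ and $y\in\mathrm O$ are unmarked, since the marked-endpoint cases were settled in Lemma~\ref{lem:markedfold}. By Lemma~\ref{lem:groups} the $\mu$-closed graph already contains every rectangle $L_J\times N_J$. Other S- and U-components fold into a marked or ordinary edge, and isolated linear summands fold into an existing product. A D-context is retained as a separate $D(t,u)$, which is legitimate by Remark~\ref{rem:context}. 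This gives the D-versions verbatim.

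\emph{Row $y\notin N$.} For extraction, send $x$ and every mark of the component to $a$, the other E vertices to $c$, $y$ to $d$, and the other O vertices to $b$. Since $y$ has no marked neighbour and $xy$ is missing, no edge maps to $ad$, and the linear image is $a$. This yields $\alpha$. For the converse, $\alpha$ shortens odd marked-end paths two edges at a time, so every O vertex becomes adjacent to a mark. Hence $N=\mathrm O$ and $L_2=\mathrm E$. Choose a neighbour $r$ of $x$ and a mark $w$ adjacent to $y$. Then $\mu$ applied to the walk $x-r-w-y$, whose internal vertex $w$ is marked, inserts $xy$.

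\emph{Row $y\in N$, $x\in L_2$.} The converse is Lemma~\ref{lem:groups}: once $\gamma_h$ makes the relevant group's neighbour and second-neighbour sets contain $y$ and $x$, $\mu$ fills $L_J\times N_J$. For extraction, first extract $\gamma_1$ as in Lemma~\ref{lem:markedfold}. For $h\ge2$, take $J$ to be the $(h-1)$-group involved and use the distance fold \eqref{eq:distancefold} measured from $J$. Marks of $J$ go to $0$, outside marks go to $2h$, and the query lands on an edge of the form $(1,\,2h)$ up to reversal. The step to check is that $\mu$-closure does not create shorter routes that spoil the distance bounds. For this I would reuse the saturation argument of Lemma~\ref{lem:groups}, which says the groups and the sets $N_J$ are unchanged by $\mu$.

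\emph{Row $y\in N$, $x\notin L_2$.} For the converse, pick a mark $w$ adjacent to $y$ and a shortest path from $w$ to $x$. Its length is even and at least $4$, since $x\notin L_2$ and $x$ is unmarked. The walk $y-w-\cdots-x$ has a marked second vertex, and $\iota_2$ inserts the edge when the path has length $4$. Longer paths are first shortened by $\mu$ applications along unmarked stretches, keeping $w$ as the marked vertex. The extraction is the main obstacle. I would map $y\mapsto x_0$, all of $N\setminus\{y\}$ together with all marks and all of $L_2$ into $\{x_0,x_1\}$ appropriately, and fold the remaining vertices by the capped distance from $L_2$ onto $x_2,x_3,x_4,x_5$, with $x\mapsto x_5$. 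This requires every vertex of $L_2$ to go to $x_1$, all of $N$ to $x_0$, and O vertices outside $N$ to $x_2$ or $x_4$. The edges $N\times L_2$ then map to $x_0x_1$, and since the $\mu$-closed graph has no edges from $N$ outside $L_2$, no edge reaches $x_0x_5$. Since the only linear image is $x_1$, this produces exactly $\iota_2$. The delicate point is confirming that the cap at $x_5$ (parity-adjusted as in \eqref{eq:distancefold}) keeps all remaining edges on path edges. This holds because $x\notin L_2$ forces distance at least $2$ from $L_2$.
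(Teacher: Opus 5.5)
Your row $y\notin N$ agrees with the paper. One small correction there: $\mu$ on $y-w-r-x$ needs the edge $wr$. That edge is present because $\alpha$ joins every mark to the whole O part, which is stronger than saying each O vertex has some marked neighbour. The other two rows, however, have genuine gaps.

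\emph{Row $y\in N$, $x\notin L_2$.} Your extraction fails whenever $d(x,L_2)=2$. The hypothesis $x\notin L_2$ gives only $d(x,L_2)\ge2$, not $\ge4$. When $d(x,L_2)=2$, some neighbour of $x$ is adjacent to $L_2\mapsto x_1$, so it cannot be sent to $x_4$. Take the S-path $w-y-e-o-x$ with only $w$ marked, so that $N=\{y\}$ and $L_2=\{w,e\}$. No substitution carries its right side into that of $\iota_2$ with $xy\mapsto x_0x_5$. Indeed, $y\mapsto x_0$ forces $e\mapsto x_1$, which keeps $x$ within distance $2$ of $x_1$. And $y\mapsto x_5$ is impossible, since $y$ is adjacent to $w\mapsto x_1$.

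The missing idea is to fold onto the shorter law $\psi\colon be\preceq a+ab+bc+cd+de$, using $W\mapsto a$, $N\mapsto b$, $\mathrm E\setminus(W\cup\{x\})\mapsto c$, $\mathrm O\setminus N\mapsto d$, $x\mapsto e$. This works because there are no edges from $W$ to $\mathrm O\setminus N$ or from $x$ to $N$. One then proves $\psi\Leftrightarrow\iota_2$ modulo $\mu$. The substitution $(x_0,\ldots,x_5)=(b,a,b,c,d,e)$ in $\iota_2$ gives $\psi$. Conversely, $\psi$ on $x_1,\ldots,x_5$ followed by $\mu$ on $x_0-x_1-x_2-x_5$ gives $\iota_2$.

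Your converse also fails as written. The law $\mu$ needs a marked inner vertex, so it cannot shorten unmarked stretches. Around $w$ it adds nothing, because the graph is already $\mu$-closed. Instead, iterate $\iota_2$ in its $\psi$ form: once $yv_{2r}$ is present, $\psi$ on $w-y-v_{2r}-v_{2r+1}-v_{2r+2}$ inserts $yv_{2r+2}$.

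\emph{Row $y\in N$, $x\in L_2$.} Your claim that the distance fold sends the query to $(1,2h)$ is false for $h\ge3$. Let $J$ be the $(h-1)$-group of the marked neighbour of $y$. The mark within distance $2$ of $x$ lies in a different $(h-1)$-group, so you only get $d(x,J)\ge2h-2$. Equality occurs for the path $m_1-y-e-o-x-o'-m_2$ with marks $m_1,m_2$: here $h=3$, but $x\mapsto4$. If you take $J$ near $x$ instead, parity sends the query to $(2,2h\pm1)$. Since you fold the closed term $v_\mu$ itself, shortcuts created by $\mu$ are not the real obstacle. What is missing is an extra step that moves a misplaced endpoint.

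The paper supplies this step by first extracting $\gamma_2$ and then using it. It extracts $\gamma_2$ with a five-valued threshold-one fold. Your own fold would also do this if capped at $4$ from the $1$-group of $y$: there $d(x,J)\ge4$, and $\mu$ on $5-4-1-0$ finishes. Then, for example, $\gamma_2$ on $0-1-(2h-2)-(2h-1)-(2h)$ with tail $2h+1$ turns the query $(1,2h-2)$ into $(0,2h+1)$, which is $\gamma_h$.

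Finally, $\gamma_1$ cannot be extracted ``as in Lemma~\ref{lem:markedfold}'' when $x$ is unmarked. It is, however, already a consequence of $\mu$.
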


\begin{proof}
For the middle row, use the folding
\[
W\mapsto a,\quad N\mapsto b,\quad
\mathrm E\setminus(W\cup\{x\})\mapsto c,\quad
\mathrm O\setminus N\mapsto d,\quad x\mapsto e.
\]
There is no edge from $W$ to $\mathrm O\setminus N$ or from $x$ to $N$. Thus the query implies
\begin{equation}\label{eq:psi}
\psi:\quad be\preceq a+ab+bc+cd+de.
\end{equation}
The substitution $(x_0,x_1,x_2,x_3,x_4,x_5)=(b,a,b,c,d,e)$ shows that $\iota_2$ implies $\psi$. Conversely, apply $\psi$ to the subpath $x_1,x_2,x_3,x_4,x_5$ to obtain $x_2x_5$, and then $\mu$ to $x_0-x_1-x_2-x_5$. Notice also that $\iota_2$ implies $\mu$ by the substitution $(a,b,a,b,c,d)$.

For the reverse implication, a mark $w$ and one of its neighbours $z$ satisfy $zv_2\preceq v$ on every two-edge walk $w-v_1-v_2$, by $\mu$. Once $zv_{2r}$ has been inserted, $\psi$ on $w-z-v_{2r}-v_{2r+1}-v_{2r+2}$ inserts $zv_{2r+2}$. Hence $\iota_2$ joins every marked neighbour to every vertex in the marked part of the component, proving the middle row. If $y\notin N$, map $x$ and all marks to $a$, other E vertices to $c$, $y$ to $d$, and other O vertices to $b$. This gives $\alpha$. The laws $\alpha,\mu$ complete every marked component: first connect marks to their entire opposite part by $\alpha$, and then apply $\mu$ through an internal mark. This proves the last row.

For the first row, Lemma~\ref{lem:groups} gives a finite first threshold $h$, since at infinite threshold the rectangle is $L_2\times N$. The converse from $\gamma_h$ is immediate from that lemma. We give the extraction, including the case where $x$ is unmarked.

At threshold one, let $\mathcal J_x$ be the groups $J$ for which $x\in L_J$. This collection is nonempty. The vertex $y$ belongs to the neighbour set of one other group, since the query is missing. Map $x$ and all marks in $\mathcal J_x$ to $0$, other marks to $4$, and other E vertices to $2$. Map O vertices neighbouring an ``other'' mark to $3$ and the remaining O vertices to $1$. No edge maps to $0-3$: such an edge at $x$ would put the supplying mark group in $\mathcal J_x$, and such an edge at a mark would merge the two groups at threshold one. No edge maps to $4-1$ either. Thus all edges map into $0-1-2-3-4$ and the query maps to $0-3$. After adding the last edge $4-5$, $\mu$ on $5-4-3-0$ gives $0-5$, extracting $\gamma_2$.

If $h\ge3$, all marks at distance at most two from $x$ lie in one $(h-1)$-group $J$, because their mutual distances are at most four. We have $d_G(J,x)\in\{0,2\}$ and $d_G(J,y)\ge2h-1$. The distance truncation \eqref{eq:distancefold} maps the query to one of
\[
(0,2h-1),\quad(0,2h+1),\quad(2,2h-1),\quad(2,2h+1).
\]
All are absent from the target path. For a query $(2,r)$, where $r\in\{2h-1,2h+1\}$, use the substitution
$(x_0,x_1,x_2,x_3,x_4,y)=(0,1,2,r,2h,r)$ in the already extracted
$\gamma_2$. The required walk is $0-1-2-r-(2h)-r$; its marks are
$0,2h$, and its edge $2r$ is the inserted query. The result is $0r$,
so the left endpoint has moved to $0$. A query ending at $2h-1$ then gives the final query by $\mu$ on $(2h+1)-(2h)-(2h-1)-0$. Thus all four possibilities yield $\gamma_h$.

Other S/U components fold to existing marked or ordinary edges; isolated linear summands fold to an existing product. In every conditional extraction, all D-components map to the same separate $D(t,u)$. In every reverse insertion, retain the actual $D_0$ from the original term. This proves the conditional statement using $\mu_D$, not an assumed global $\mu$.
\end{proof}

\subsection{Cross-component queries and completion of the reduction}

It remains to handle two S-components $C_1,C_2$ when $\neg d(v)$, with query $x\in\mathrm E(C_1)$, $y\in\mathrm O(C_2)$. Write $N_i=N_G(W\cap C_i)$ and $L_i=N_G(N_i)$.

\begin{lemma}\label{lem:crossfold}
If $x$ is marked, the query is equivalent to $\beta$ when $y\in N_2$ and to $\{\beta,\alpha\}$ otherwise. If $x$ is unmarked, first extract $\mu$ and replace $v$ by $v_\mu$. The remaining query, relative to $\mu$, is equivalent to
\[
\begin{cases}
\{\beta\},&y\in N_2,\ x\in L_1,\\
\{\beta,\iota_2\},&y\in N_2,\ x\notin L_1,\\
\{\beta,\alpha\},&y\notin N_2.
\end{cases}
\]
\end{lemma}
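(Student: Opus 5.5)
The plan is to follow the folding template of Lemmas~\ref{lem:markedfold} and~\ref{lem:internalfold}. For each row we first extract the representatives by a vertex substitution that sends the right side into the right side of the representative and the query onto its query. Then we recover the query from those representatives by explicit graph insertions. Throughout $\neg d(v)$ holds, so every component other than $C_1,C_2$ folds into an ordinary or marked edge, and isolated linear summands fold into an existing product.

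\textbf{Extracting $\beta$.} Map $C_1$ to the marked edge $x+xy'$: send $x$ and every E vertex of $C_1$ to the first variable and every O vertex to the second. Map $C_2$ to $C(t,z)$ by sending its marks to $t$, its O part to $z$ and its other E vertices to $t$. Then the query $xy$ becomes $xz$, which is $\beta$ after renaming, and the right side maps into $C(x,y)+C(t,z)$. This works whether $x$ is marked or not; if $x$ is unmarked, $x$ still lands on the marked variable and the right side is unaffected. Hence $\beta$ is always extracted.

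\textbf{Further extractions.} When $y\notin N_2$, fold $C_2$ as in the last row of Lemma~\ref{lem:internalfold}: marks to $a$, other E vertices to $c$, $y$ to $d$, and other O vertices to $b$. Fold $C_1$ so that $x$ goes to $a$ and its O part to $b$; here $x$ can be identified with the marks of $C_2$ because no edge joins $x$ to $C_2$. Since $y$ has no marked neighbour, no edge maps to $ad$, and we obtain $\alpha$. When $x$ is unmarked and $y\in N_2$, $x\notin L_1$, we merge the two components by sending $x$ to $e$ and folding $C_1$'s marks to $a$, $N_1$ to $b$ and the rest of $C_1$ to $c,d$. We also fold $C_2$ so that $y$ goes to $b$. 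This should yield $\psi$ from \eqref{eq:psi}, which is equivalent to $\iota_2$ relative to $\mu$. The obstacle is to make sure that no edge of $C_1$ lands on the forbidden $be$ or $ad$ positions. This uses $x\notin L_1$, meaning $x$ has no neighbour in $N_1$, and the absence of edges from $W$ to $\mathrm O\setminus N_1$, both in the closed graph $v_\mu$.

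\textbf{Converse.} Suppose $y\in N_2$, with marked neighbour $w_2$. If $x$ is marked, then $\beta$ applied to $C(x,x')+C(w_2,y)$, with $x'$ any neighbour of $x$, inserts $xy$. If $x$ is unmarked and $x\in L_1$, choose $w_1$ and $r\in N_1$ with $rx$ an edge of $v_\mu$. Apply $\beta$ to get $w_1y$, and then $\mu$ on the walk $x-r-w_1-y$, whose interior contains the mark $w_1$, to get $xy$. If $x\notin L_1$, then $\iota_2$ first joins marked neighbours to every vertex of the marked part of $C_1$, as in Lemma~\ref{lem:internalfold}, which puts $x$ into $L_1$; the previous case then applies. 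If $y\notin N_2$, then $\alpha$ connects every mark of $C_2$ to its whole opposite part, so $y$ becomes a neighbour of a mark and we reduce to the case $y\in N_2$.

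The main obstacle is the $\iota_2$ extraction across two components. If the single-component folding fails there, I would instead fold $C_2$ onto the edge $b$--$y$ with marks at $a$ and derive $\psi$ through one application of $\beta$ within the target.
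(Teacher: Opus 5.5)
Your proposal is correct and is essentially the paper's proof. It uses the same three foldings: two independent marked edges for $\beta$; the four-vertex folding for $\alpha$; and, for $\psi$ (hence $\iota_2$ relative to $\mu$), the map $W\cap C_1\mapsto a$, $N_1\mapsto b$, the remaining E and O vertices of $C_1$ to $c,d$, $x\mapsto e$, with $C_2$ and the other components collapsed onto the marked edge $a$--$b$. The converses also match: $\beta$ followed by $\mu$ on the reversed walk $y-w_1-r-x$. Your check that no edge can land on $ad$ or $be$ is exactly what makes the $\psi$ folding work, so the fallback plan at the end is unnecessary. Applying $\iota_2$ before $\beta$ in the second row is an inessential reordering of the paper's argument.

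There is one small omission, in the converse of the row $y\notin N_2$ when $x$ is unmarked and $x\notin L_1$.
\begin{itemize}
\item Applying $\alpha$ only inside $C_2$ puts you in the row $y\in N_2$, $x\notin L_1$.
\item Your converse for that row uses $\iota_2$, which is not in $\{\beta,\alpha\}$.
\item The fix is to apply $\alpha$ inside $C_1$ as well. Every mark of $C_1$ then becomes adjacent to all of $\mathrm O(C_1)$. Since $x$ has a neighbour in $C_1$, this gives $x\in L_1$, and the first-row converse with $\beta$ and $\mu$ finishes the argument.
\item Equivalently, cite $\alpha,\mu\Rightarrow\iota_2$. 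This is what the paper means by letting $\alpha,\mu$ complete both marked components.
\end{itemize}
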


\begin{proof}
Fold the two query components to two independent marked edges, taking $x$ to the marked endpoint of the first and $y$ to the opposite endpoint of the second. This extracts $\beta$ in every case, even if $x$ was not marked. If $x$ is marked and $y$ has a marked neighbour, apply $\beta$ directly to an edge at $x$ and this marked edge at $y$. If $y$ has no marked neighbour, map $x$ and all E marks to $a$, the other E vertices to $c$, $y$ to $d$, and other O vertices to $b$. There is no edge $xy$ between the components and no marked neighbour at $y$, so this extracts $\alpha$. Conversely, use $\alpha$ to connect a mark in $C_2$ to $y$, followed by $\beta$.

When $x$ is unmarked, Lemma~\ref{lem:extractmu} justifies the preliminary $\mu$-closure. If $x\notin L_1$, map the marks in $C_1$ to $0$, $N_1$ to $1$, the other E vertices except $x$ to $2$, the other O vertices to $3$, and $x$ to $4$. Fold $C_2$ and the other components into the marked edge $0-1$. The missing query maps to $1-4$ and the right side into the four-edge path with mark $0$. This is \eqref{eq:psi}, and hence gives $\iota_2$. When $y\notin N_2$, the preceding four-vertex folding gives $\alpha$.

For the converse in the first unmarked case, choose an edge $xr$ with $r\in N_1$ and a mark $w$ adjacent to $r$. The law $\beta$ adds $wy$, and $\mu$ on $y-w-r-x$ adds $xy$. In the second case, $\beta$ first makes $y$ a neighbour of a mark in $C_1$, and the even-walk propagation of $\iota_2$ from Lemma~\ref{lem:internalfold} reaches $x$. In the third case, $\alpha,\mu$ first complete both marked components, reducing to the first case. This proves both directions of every reduction.
\end{proof}

\begin{proof}[Proof of Theorem~\ref{thm:reduction}]
Decompose an identity into finitely many monomial absorptions and use the environment normalization. Purely linear right sides, linear queries, graph-external queries, zero queries, U-component queries and all but one kind of cross-component query are covered by Lemma~\ref{lem:elementaryfolds}. For a same-component opposite-part edge in a marked component, two marked endpoints are already handled by the environment, one marked endpoint by Lemma~\ref{lem:markedfold}, and two unmarked endpoints by Lemmas~\ref{lem:extractmu} and \ref{lem:internalfold}. The exceptional S--S cross-component queries are covered by Lemma~\ref{lem:crossfold}. These cases exhaust all monomials, component positions and marking patterns. Each reduction is a two-way relative implication and produces finitely many representatives. Taking their union proves the theorem.
\end{proof}

\begin{corollary}[existence of presentations]\label{cor:presentation}
Every subvariety of $\mathcal V$ has a presentation \eqref{eq:presentation}.
\end{corollary}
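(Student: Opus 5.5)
The plan is to obtain Corollary~\ref{cor:presentation} from Theorem~\ref{thm:reduction}, combined with two facts: the path families are monotone, and the fixed set $\F_{25}$ is finite.

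First, let $\mathcal W$ be a subvariety of $\mathcal V$. By Birkhoff's theorem, $\mathcal W=\mathcal V[\Sigma]$ for some set $\Sigma$ of identities, possibly infinite. For each identity in $\Sigma$, Theorem~\ref{thm:reduction} gives a finite set $E_\sigma\subseteq\F_{25}\cup\{\gamma_n,\gamma_n^D:n\ge1\}$ with $\mathcal V[\sigma]=\mathcal V[E_\sigma]$. Equational classes are intersections of the classes defined by the individual identities, so
\[
\mathcal W=\bigcap_{\sigma\in\Sigma}\mathcal V[E_\sigma]=\mathcal V[E],\qquad E=\bigcup_{\sigma\in\Sigma}E_\sigma .
\]
Now put $H=E\cap\F_{25}$, which is automatically finite. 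Let $p'$ be the supremum of the indices $n$ with $\gamma_n\in E$, and $q'$ the supremum of those with $\gamma_n^D\in E$. Each supremum may be $0$ if the set is empty, and may be $\infty$.

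The key step is downward monotonicity: $\gamma_n$ implies $\gamma_m$ for all $m\le n$, and likewise for $\gamma^D$. Given an instance of $\gamma_m$, pad the even path $x_0\cdots x_{2m}$ by out-and-back steps $x_{2m}x_{2m-1}x_{2m}\cdots$ to reach length $2n$, keeping the endpoint $x_{2m}$. Substituting this walk into $\gamma_n$ gives a right side whose monomials are exactly those of the $\gamma_m$ instance, so the query $x_0y$ is derived. This is the padding remark made after the closure rules in Section~\ref{sec:normal}. Consequently, if $p'$ is finite, the family $\{\gamma_n\in E\}$ is equivalent to $\{\gamma_i:1\le i\le p'\}$. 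If $p'=\infty$, the family is unbounded, so it implies every $\gamma_i$.

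Next, I would show $\gamma_n$ implies $\gamma_n^D$: add $D(t,u)$ as extra context, which absorption inequalities permit because the right side only grows. So the $D$-threshold can be raised to $q=\max(p',q')$ without changing the variety, and we take $p=p'$. This gives $0\le p\le q\le\infty$ and
\[
\mathcal W=\mathcal U(H,p,q),
\]
which is a presentation of the form~\eqref{eq:presentation}.

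The only delicate point is justifying the normalization $p\le q$, that is, verifying that $\gamma_n\Rightarrow\gamma_n^D$ is a legitimate derivation. I expect this to be easy, since adding summands to the right side of an absorption preserves validity. Everything else is bookkeeping.
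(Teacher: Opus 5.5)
Your proposal is correct and follows essentially the same route as the paper. You apply Theorem~\ref{thm:reduction} to each defining identity and take the union of the representatives, whose fixed part lies in the finite set $\F_{25}$; you make each path index set downward closed by the out-and-back substitution, which done in one step is the iterated form of the paper's fold-back $\gamma_{n+1}\Rightarrow\gamma_n$; and you use $\gamma_n\Rightarrow\gamma_n^D$ (adding context) to normalize to $p\le q$.
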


\begin{proof}
Apply Theorem~\ref{thm:reduction} to each identity in any defining set and take the union of its representatives. The fixed part is a subset of the finite set $\F_{25}$. Folding back two edges gives $\gamma_{n+1}\Rightarrow\gamma_n$ and $\gamma_{n+1}^D\Rightarrow\gamma_n^D$, while adding context gives $\gamma_n\Rightarrow\gamma_n^D$. Thus each index set can be replaced by its downward closure: it is empty, a finite initial segment, or all positive integers. These are encoded by $0$, a positive integer, or $\infty$, with $p\le q$.
\end{proof}
\section{Canonical signatures and the full lattice}
\label{sec:canonical}

We first determine the exact path thresholds of an arbitrary presentation. This includes a converse proof on paths of unrestricted length.

\subsection{Exact thresholds}

Let
\begin{align*}
I_0&=\{\rho,\omega,\zeta,\delta,\nu,\alpha,\beta,s,\chi,\tau,\iota_2\},\\
I_D&=I_0\cup\{\omega_D,\zeta_D,\nu_D,\alpha_D,\xi,\iota_2^D\}.
\end{align*}
For $H\subseteq\F_{25}$ and $0\le p\le q\le\infty$ put
\[
a=\max\{p,\mathbf1_{\mu\in H}\},\qquad
b=\max\{q,\mathbf1_{H\cap\{\mu,\mu_D\}\ne\varnothing}\}.
\]

\begin{theorem}[exact thresholds]\label{thm:thresholds}
The true unconditioned and conditioned path indices in $\mathcal U(H,p,q)$ have respective suprema
\begin{align}
p^*&=\begin{cases}
\infty,&H\cap I_0\ne\varnothing\text{ or }(\eta\in H\text{ and }a\ge1),\\
a,&\text{otherwise},
\end{cases}\label{eq:pthreshold}\\
q^*&=\begin{cases}
\infty,&H\cap I_D\ne\varnothing\text{ or }(H\cap\{\eta,\eta_D\}\ne\varnothing\text{ and }b\ge1),\\
b,&\text{otherwise}.
\end{cases}\label{eq:qthreshold}
\end{align}
The supremum of the empty index set is understood to be $0$.
\end{theorem}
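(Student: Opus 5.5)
The proof splits into two directions. Lower bounds: each listed condition forces the stated family of $\gamma_n$ (resp.\ $\gamma_n^D$). Upper bounds: in the ``otherwise'' cases, $\gamma_{a+1}$ (resp.\ $\gamma_{b+1}^D$) fails in $\mathcal U(H,p,q)$. For the upper bounds we use Theorem~\ref{thm:normal}. It suffices to compute the closure $C_{H,p,q}$ of the right side of $\gamma_{a+1}$ and check that the query $x_0y$ is absent. Because $\gamma_{n+1}\Rightarrow\gamma_n$ and $\gamma_n\Rightarrow\gamma_n^D$ (Corollary~\ref{cor:presentation}), the true index sets are initial segments. We also have $p^*\le q^*$, so only the suprema need to be determined.

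For the lower bounds we proceed as follows.
\begin{itemize}
\item $\mu\in H$ gives $\gamma_1$ by Lemma~\ref{lem:groups}, and $\mu_D$ gives $\gamma_1^D$ in the same way. This accounts for $a$ and $b$.
\item For each $f\in I_0$ we show directly that $f$ implies every $\gamma_n$. The right side of $\gamma_n$ is a single S-component path with marks $x_0,x_{2n}$ and a pendant edge $x_{2n}y$. Some laws kill this term: $s$ and $\chi$ make a marked or unmarked edge zero, and $\tau$ trivialises the variety. Other laws complete the path: $\alpha$ shortens from the marked end $x_0$ two edges at a time until $x_0y$ appears. For $\iota_2$, the propagation in the proof of Lemma~\ref{lem:internalfold} gives $\mu$, and $\psi$ joins the mark $x_0$ along even walks to the opposite vertex $y$.
\item The laws $\rho,\omega,\zeta,\delta,\nu,\beta$ are handled the same way. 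The linear laws $\rho,\omega,\zeta$ mark enough vertices that the environment rule (the $R_{12}$ rule joining opposite marks) inserts the query. The edge laws $\delta$ and $\nu$ insert $x_0y$ directly from an odd path with context. The law $\beta$ is applied to the edges $x_0x_1$ and $x_{2n}y$, viewed as marked edges.
\item For $I_D$ we run the same checks in the presence of the context $D(t,u)$. The additional members $\xi,\omega_D,\zeta_D,\nu_D,\alpha_D,\iota_2^D$ act exactly as their undecorated versions, by Remark~\ref{rem:context}.
\item If $\eta\in H$ and $a\ge1$, $\eta$ marks every even-position vertex $x_{2i}$. Then $\gamma_1$ (or $\mu$) applied along consecutive marks at distance $2$ shares neighbours step by step, and the neighbour $y$ of $x_{2n}$ reaches $x_0$. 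This gives all $\gamma_n$. The same argument with $\eta_D$ and $b\ge1$ gives all $\gamma_n^D$.
\end{itemize}

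For the upper bounds, assume we are in the ``otherwise'' case for $p^*$. Then $H\cap I_0=\varnothing$. Also, either $\eta\notin H$ or $a=0$, and $H$ may contain only members of $\F_{25}\setminus I_0$: namely $\eta$, $\lambda$, the D-laws, $\mu$, $\mu_2$, $\delta$-type laws excluded, $\varepsilon$, $\kappa$, $\xi$-free. We take $v$ to be the right side of $\gamma_{a+1}$, which has no D-component. Then we check that no law in $H$ fires on $v$ except in ways that stay inside the path-closure picture.
\begin{itemize}
\item Conditioned laws need a D-context and never apply.
\item $\lambda,\varepsilon,\kappa,\mu_2$ require a D-component or two adjacent-class marks, so they do not fire either.
\item $\mu$ only fills $L_J\times N_J$ for $1$-groups, which is covered by $a\ge1$.
\item $\eta$, present only when $a=0$ (so $\mu\notin H$ and $p=0$), marks the even vertices. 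With no path law available, the marks then add no edges.
\end{itemize}
The path rule with threshold $a$ groups marks at distance at most $2a$. Since $x_0$ and $x_{2a+2}$ are at distance $2a+2$, they lie in different groups, and Lemma~\ref{lem:groups} shows $y$ never becomes adjacent to $x_0$. Hence $x_0y\notin C_{H,p,q}(v)$, and $\gamma_{a+1}$ fails. For $q^*$ we repeat the argument with $v+D(t,u)$ and threshold $b$. The D-component is a separate component, so the laws that now fire on it (for example $\lambda$ and $\kappa$) affect only that component. This again uses the last sentence of Lemma~\ref{lem:groups}.

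The main obstacle is the upper bound: we must certify that the closure of the path term is stable. The delicate point is when $\eta$ is present with $a=0$ but $b\ge1$ and $\eta\in H$. There we must confirm that $\eta$-marking, together with the conditioned $\mu_D$, does not leak into the unconditioned term. I would settle this by an explicit description of $C_{H,p,q}(v)$: the even vertices are marked, edges are added only via groups, and no odd cycle is created. Then I would check each of the 25 rules against this description.
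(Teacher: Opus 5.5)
Your proposal is correct and follows the paper's proof. For the lower bounds you show that each member of $I_0$ (resp.\ $I_D$), or $\eta$/$\eta_D$ together with a first path law, forces every $\gamma_n$ (resp.\ $\gamma_n^D$); for the upper bounds you check that the closure of the right side of $\gamma_{a+1}$ (resp.\ $\gamma_{b+1}^D$ with a disjoint $D(t,u)$) omits the query and then apply Theorem~\ref{thm:normal}. The differences are cosmetic: you cite Lemma~\ref{lem:groups} for the $\mu$-step where the paper computes the single added edge $x_{2n-2}y$ directly (this misses $x_0$ exactly because $n=a+1\ge 2$), and the ``delicate'' case you flag is immediate, since neither the conditioned laws nor the threshold $q$ act on a term without a D-component (also, $\xi\notin I_0$ can lie in $H$, but it never fires on the S-path because it needs a double-marked edge).
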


\begin{proof}
For the unconditioned family, consider the right side of $\gamma_n$: a path $0-1-\cdots-(2n)-(2n+1)$ with marks $0,2n$ and query $0\,(2n+1)$. It is an S-component. The input path laws already give every index at most $p$, and $\mu$ implies $\gamma_1$ by Lemma~\ref{lem:groups}, giving the lower bound $a$.

Each member of $I_0$ gives every path index. The laws $\rho,\omega,\zeta$ eventually mark the whole path, after which the environment joins the two opposite-part queried vertices. The laws $\delta,\nu$ repeatedly shorten a three-edge subpath; the marked-edge context for $\nu$ is already present. The law $\alpha$ does the same starting at the marked endpoint. The law $\beta$ connects that mark directly to a neighbour of the other mark. Each of $s,\chi,\tau$ makes the right side zero. Finally, $\iota_2$ connects a marked neighbour to the entire marked part by the even-walk induction in Lemma~\ref{lem:internalfold}. This proves the first infinity condition. If $\eta$ and one path law are available, $\eta$ marks the whole even part, and repeated uses of $\gamma_1$ along length-two paths share their neighbours. Thus the second infinity condition is also sufficient.

To prove necessity, assume neither condition holds and $p<\infty$. We show that the closure of every query with $n>a$ omits its queried edge. On the S-path, any remaining fixed rule other than possibly $\eta,\mu$ requires a D-context or adjacent opposite-part marks: these are the conditional representatives and $\lambda,\mu_2,\kappa,\varepsilon,\xi$. The rules with a global marked-edge context and the other zero rules are precisely those already excluded by $I_0$.

If $\eta\in H$, necessarily $p=0$ and $\mu\notin H$. Its iterations mark exactly the even part. No edge changes, no D-component is formed, and none of the excluded-pattern premises becomes applicable. The missing query remains missing for every $n\ge1$.

If $\eta\notin H$, the marks stay exactly $0,2n$. Without $\mu$ the graph stays unchanged for $n>p$, since distinct marks remain farther apart than $2p$. If $\mu$ is present and $n>\max\{p,1\}$, it can add only the edge $(2n-2)\,(2n+1)$. Indeed, a three-edge walk having internal mark $0$ returns through its unique neighbour and adds no new edge. At mark $2n$, its two neighbours $2n-1,2n+1$ produce exactly the displayed additional edge; afterwards they share the same two-step E-neighbourhood. No further $\mu$ edge is possible. This added edge does not shorten the distance $2n$ between the two marks, and so does not activate a path rule. The graph remains an S-component with no adjacent marks. Thus none of the other remaining fixed rules becomes applicable at any later stage. Theorem~\ref{thm:normal} turns this saturated missing query into a counterexample algebra. The upper bound is therefore exactly $a$.

For $\gamma_n^D$, retain a disjoint double-marked edge throughout. The effective path threshold is $q$, and $\mu$ or $\mu_D$ gives the first conditioned index. Each extra member of $I_D$ acts as its unconditioned version on the long path with this fixed context; $\xi$ makes the context itself zero. The same propagation and path-shortening arguments prove both infinity conditions in \eqref{eq:qthreshold}.

If no infinity condition occurs, the independent D-edge remains unchanged. The local rules $\lambda,\mu_2,\kappa,\varepsilon$ can act on that edge only trivially, and cannot affect the disconnected S-path. On the latter, the only possible actions are those of $\eta$ or $\eta_D$, of $\mu$ or $\mu_D$, and the threshold $q$. No O-part mark or connection to the D-edge is created. The preceding two invariant arguments therefore apply with $q,b$ in place of $p,a$. This proves the conditioned upper bound. Infinite input thresholds already give the entire corresponding family, completing all cases.
\end{proof}

\subsection{Canonical signatures}

For $f\in\F_{25}$ write $q_f\preceq v_f$ for its absorption form, with square queries interpreted as $0$. Define
\begin{align}
H^*&=\{f\in\F_{25}:C_{H,p,q}(v_f)=0\text{ or }q_f\in C_{H,p,q}(v_f)\},\label{eq:hclosure}\\
\Can(H,p,q)&=(H^*,p^*,q^*).\label{eq:canonical}
\end{align}
The convention for $\tau$ is that its query is a fresh linear variable. If $\tau\in H$, the result is $(\F_{25},\infty,\infty)$.

\begin{theorem}[canonical signatures]\label{thm:signatures}
The fixed points of $\Can$ are in bijection with $\Sub(\mathcal V)$ through \eqref{eq:presentation}. The operator is well defined and idempotent. The lattice is countably infinite.
\end{theorem}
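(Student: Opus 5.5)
The plan is to show that $\Can(H,p,q)$ is a complete invariant of the subvariety $\mathcal U(H,p,q)$. Concretely, I will prove that $\mathcal U(H,p,q)=\mathcal U(\Can(H,p,q))$ and that the signature $\Can(H,p,q)$ depends only on that subvariety. With these two facts, Corollary~\ref{cor:presentation} (every subvariety has a presentation) gives surjectivity onto $\Sub(\mathcal V)$. Invariance then gives injectivity on fixed points, and idempotence follows formally.

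\emph{Step 1: the signature is an invariant.} By Theorem~\ref{thm:normal}, a fixed representative $f$ holds in $\mathcal U(H,p,q)$ exactly when $C_{H,p,q}(v_f)=0$ or $q_f\in C_{H,p,q}(v_f)$. So $H^*$ is precisely the set of members of $\F_{25}$ valid in the subvariety. For $\tau$ we use the fresh-variable convention; if $\tau$ is valid, the variety is trivial and everything is valid. Similarly, Theorem~\ref{thm:thresholds} shows that $p^*$ and $q^*$ are the suprema of the indices $n$ with $\gamma_n$, respectively $\gamma_n^D$, valid in $\mathcal U(H,p,q)$. This uses the downward closure $\gamma_{n+1}\Rightarrow\gamma_n$ and $\gamma_n\Rightarrow\gamma_n^D$ from Corollary~\ref{cor:presentation}, which also gives $p^*\le q^*$. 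Hence $(H^*,p^*,q^*)$ is determined by the equational theory, i.e.\ by the subvariety alone. Well-definedness is then clear: $\Can$ is computed by the finite closure procedure, and the value does not depend on which presentation of the variety is chosen.

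\emph{Step 2: same variety.} We have $H\subseteq H^*$, $p\le p^*$ and $q\le q^*$, so $\mathcal U(H^*,p^*,q^*)\subseteq\mathcal U(H,p,q)$. Conversely, all identities in $(H^*,p^*,q^*)$ are valid in $\mathcal U(H,p,q)$ by Step 1, which gives the reverse inclusion. Applying Step 1 to the equal variety gives $\Can(\Can(H,p,q))=\Can(H,p,q)$, which is idempotence. For the bijection: a fixed point $\sigma$ maps to $\mathcal U(\sigma)$. Surjectivity holds since any subvariety is some $\mathcal U(H,p,q)$, and that equals $\mathcal U(\Can(H,p,q))$. Injectivity holds because if $\mathcal U(\sigma)=\mathcal U(\sigma')$ then $\sigma=\Can(\sigma)=\Can(\sigma')=\sigma'$ by invariance.

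\emph{Step 3: countably infinite.} Countability is immediate, since there are at most $2^{25}$ choices of $H$ and countably many pairs $(p,q)$ with values in $\mathbb N\cup\{\infty\}$. For infinitude, take $H=\varnothing$. No infinity condition in Theorem~\ref{thm:thresholds} fires, so $p^*=q^*=n$ for $\Can(\varnothing,n,n)$. I also need $H^*=\varnothing$, or at least that $p^*$ varies; but $p^*$ alone already distinguishes the varieties $\mathcal U(\varnothing,n,n)$ for $n\ge1$. These are therefore pairwise distinct subvarieties.

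The main obstacle is Step 1's use of Theorem~\ref{thm:thresholds}. That theorem must genuinely give the \emph{true} validity of $\gamma_n$ in the generated variety, including the interaction of conditioned rules with the fixed context. I would rely on its necessity argument via the separating algebras of Theorem~\ref{thm:normal}.
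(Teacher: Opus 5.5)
Your proposal is correct and follows essentially the paper's own argument. Corollary~\ref{cor:presentation} gives existence, Theorems~\ref{thm:normal} and~\ref{thm:thresholds} identify $(H^*,p^*,q^*)$ as the exact truth coordinates (so the triple is an invariant of the variety, which yields idempotence and injectivity), and the varieties $\mathcal U(\varnothing,n,n)=\mathcal V[\gamma_n]$ with exact thresholds $(n,n)$ give infinitude, while your explicit surjectivity/injectivity bookkeeping and your check that $p^*\le q^*$ only spell out what the paper compresses into a few lines.
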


\begin{proof}
Corollary~\ref{cor:presentation} supplies a presentation for every subvariety. Theorems~\ref{thm:normal} and \ref{thm:thresholds} show that $H^*,p^*,q^*$ are its exact representative truth coordinates. Adjoining true identities preserves the defined variety, so the normalized triple defines the same variety and is fixed by $\Can$. If two normalized triples differ, a fixed representative or a path identity at a finite index separates them. Conversely, equal triples give equal presentations. This proves both existence and uniqueness, rather than merely uniqueness among already specified presentations.

There are finitely many choices of $H$ and countably many pairs of thresholds. For the lower bound, Theorem~\ref{thm:thresholds} applied with $H=\varnothing$ gives exact thresholds $(n,n)$ for $\mathcal V[\gamma_n]$. Consequently
\[
\mathcal V[\gamma_1]\supsetneq\mathcal V[\gamma_2]\supsetneq\cdots.
\]
The missing query proof in that theorem and $B_X(\varnothing,n,n)$ give a finite separating algebra at every step, so all these containments are strict.
\end{proof}

\subsection{A finite implication calculus}

Put $g=\gamma_1$, $g_D=\gamma_1^D$, and
$B=\F_{25}\cup\{g,g_D\}$. These two additional names record whether a
path threshold is positive; they do not identify a finite threshold with
an infinite one. An implication between subsets of $B$ will always mean
an implication relative to $\mathcal V$.

\begin{lemma}\label{lem:finitecalculus}
The implications between the identities in $B$ are generated by
Table~\ref{tab:horn}. In particular, the table includes all implications
with arbitrarily many premises. On the right side of any identity in
$B$, the closure under $\mathcal U(H,p,q)$ depends on $p,q$ only through
whether each is zero or positive.
\end{lemma}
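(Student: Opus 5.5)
The plan is to reduce every implication question in $B$ to finitely many closure computations, using Theorem~\ref{thm:normal}. For $S\subseteq B$ and $f\in B$, the implication $S\Rightarrow f$ holds relative to $\mathcal V$ exactly when $C_{S}(v_f)=0$ or $q_f\in C_{S}(v_f)$. Here $C_S$ is the closure $C_{H,p,q}$ with $H=S\cap\F_{25}$, $p=\mathbf 1_{g\in S}$ and $q=\mathbf 1_{S\cap\{g,g_D\}\neq\varnothing}$. Two facts make this legitimate. First, $g\Rightarrow g_D$ by adding context. Second, $\mathcal V[S]=\mathcal U(H,p,q)$ for this choice of $(H,p,q)$. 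Since $B$ has only $27$ members, the implication relation is a finite closure operator on subsets of $B$. The statement about arbitrarily many premises is then automatic once we show that the Horn clauses of Table~\ref{tab:horn} generate the same closure operator.

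I would carry this out in three steps.

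\textbf{Step 1: validity of the table.} For each row I exhibit the derivation. Most rows come directly from the folding and propagation arguments already proved. Examples are $\mu\Rightarrow g$ (Lemma~\ref{lem:groups}), $\mu\Rightarrow\kappa$, and $\iota_2\Rightarrow\mu$ (Lemma~\ref{lem:internalfold}). The implications from each member of $I_0$ or $I_D$ to the path laws come from Theorem~\ref{thm:thresholds}. The remaining rows follow from specializations: adding context gives the $D$-versions, and substituting $y\mapsto xy$ turns linear laws into zero laws.

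\textbf{Step 2: completeness.} I would show that every set $S\subseteq B$ closed under the table is also closed under true implication. Concretely, for each $f\notin S$ I verify that $q_f\notin C_S(v_f)$ and $C_S(v_f)\neq0$. Then the finite algebra $B_X(H,p,q)$ from Theorem~\ref{thm:normal} separates $\mathcal V[S]$ from $f$. To keep this manageable I would organize the check by the shape of $v_f$ rather than by $S$. The right sides of $B$ fall into few graph types:
\begin{itemize}[leftmargin=2.2em]
\item a single edge or a marked edge, possibly with a disjoint $D(t,u)$;
\item a three-edge path with various mark patterns, plus a context;
\item the five-edge path of $\iota_2$;
\item the path $0-1-2-3$ of $g$ with marks $0,2$.
\end{itemize}
For each type I compute which rules of $\F_{25}\cup\{g,g_D\}$ can fire at all. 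A rule fires only if its right side folds into the current term, which requires the appropriate mark pattern, D-context or path length. I then check that the set of rules that fire on $v_f$ and produce $q_f$ is exactly the set of antecedents recorded in the table for $f$.

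\textbf{Step 3: the threshold claim.} The graphs $v_f$ have diameter at most $5$. Their marks in one class lie at distance at most $2$, the only exception being the two ends of $\iota_2$'s path at distance $4$. So in rule (2) of the closure, distance at most $2k$ for $k\ge1$ joins exactly the same mark pairs as $k=1$. The one case needing care is $\iota_2$. There I check that the marks $x_1$ and its partners never reach distance $4$ within one class before the query is already inserted by $\mu$-type sharing. Hence $C_{H,p,q}(v_f)$ depends only on whether $p$ and $q$ are positive.

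The main obstacle is the completeness check in Step 2. Closure rules can cascade: a rule creates a new mark or a D-component, which then activates a conditional rule. So one must track the \emph{saturated} term, not a single step. I would handle this with the invariant method of Theorem~\ref{thm:thresholds}. For each right-side type, I list the possible enlargements (mark propagation by $\eta/\omega$, edge insertion by $\mu/\alpha/\kappa$, zeroing) and show that the enlarged graphs stay within a small finite family, each of whose closures is computed once. This is where the counting by edge states, conditioned states and marking states mentioned in the introduction would be used.
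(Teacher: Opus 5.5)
Your overall strategy matches the paper's. You prove each row of Table~\ref{tab:horn} sound. Then, for every set $E$ closed under the table and every $f\notin E$, you show that the saturated closure of $v_f$ is nonzero and omits $q_f$, and you separate with $B_X(E\cap\F_{25},\mathbf 1_{g\in E},\mathbf 1_{g_D\in E})$ from Theorem~\ref{thm:normal}.

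The paper makes your ``small finite family of enlarged graphs'' concrete as the envelopes of Table~\ref{tab:envelopes}. Each right side receives exactly the insertions whose single label lies in $E$, or becomes $0$ if its zero label does. The binary rows $\alpha\mu\Rightarrow\iota_2$, $\eta g\Rightarrow\alpha,\iota_2$, $\omega\xi\Rightarrow s$, etc.\ are what make that candidate saturated.

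Two smaller points:
\begin{itemize}
\item Your list of shapes omits the two-edge right sides of $\eta,\lambda$. It also omits the two-component right sides of $\beta,\zeta,\zeta_D$; in the $\beta$ case the two $\beta$ edges are the only nonzero cross-component insertions.
\item Do not appeal to the edge, conditioned and marking state counts. Those tables are derived from this lemma, so using them would be circular. Only the separation of marking actions from edge actions is needed.
\end{itemize}

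The step that fails as written is Step~3. On the right side of $\iota_2$ only $x_1$ is marked. The ends $x_0,x_5$ lie in opposite classes, and the same-class pairs at distance four are $(x_0,x_4)$ and $(x_1,x_5)$.

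Your proposed check is false. Take $S=\{\eta,g\}$: two applications of $\eta$ mark $x_3$ and $x_5$ before any edge is inserted. Then $x_1,x_5$ are same-class marks at distance four, and the rule with $k=2$ joins a pair that the rule with $k=1$ does not. Knowing that $x_0x_5$ eventually appears would not rescue this either, because the assertion concerns the whole closure, not just the query.

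What is needed is an invariant of the saturated term, which is the paper's argument. In every nonzero closure of these right sides, any two same-class marks of a component are linked by a chain of marks at successive distance two. This is automatic on components with at most four vertices. On the six-vertex path, marks leave $x_1$ in one of two ways:
\begin{itemize}
\item two steps at a time (by $\eta$ or $\eta_D$), so $x_5$ is marked only together with $x_3$;
\item once some rule marks a vertex of the other class (by $\omega,\omega_D,\zeta,\zeta_D,\rho,\lambda$), eventually the whole component.
\end{itemize}
When the relevant threshold is positive, the sharing along such a chain in the threshold-one closure gives every member of the chain the same neighbourhood, as in the transitivity step of Lemma~\ref{lem:groups}. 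Hence the rule for any larger threshold adds nothing to that closure. Since the closure is monotone in the thresholds and least among closed extensions, the two closures coincide.
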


\begin{table}[ht]
\centering\small
\caption{A complete implication calculus on $B$. Each row is read from left to right.}
\label{tab:horn}
\begin{tabular}{@{}ll@{\qquad}ll@{}}
\toprule
Premise & Consequences & Premise & Consequences\\\midrule
$\alpha$ & $\alpha_D,g$ & $\alpha_D$ & $g_D,\kappa$\\
$\beta$ & $g,\xi$ & $\chi$ & $\beta,\rho$\\
$\delta$ & $\nu$ & $\eta$ & $\eta_D$\\
$\eta_D$ & $\lambda$ & $g$ & $g_D$\\
$g_D$ & $\varepsilon$ & $\kappa$ & $\varepsilon$\\
$\iota_2$ & $\iota_2^D,\mu$ & $\iota_2^D$ & $\mu_D$\\
$\lambda$ & $\kappa,\mu_2$ & $\mu$ & $g,\mu_D$\\
$\mu_2$ & $\varepsilon$ & $\mu_D$ & $g_D,\kappa,\mu_2$\\
$\nu$ & $\alpha,\iota_2,\nu_D$ & $\nu_D$ & $\alpha_D,\iota_2^D$\\
$\omega$ & $\alpha,\eta,\omega_D$ & $\omega_D$ & $\alpha_D,\eta_D$\\
$\rho$ & $\delta,\zeta$ & $s$ & $\beta,\omega$\\
$\tau$ & $\chi$ & $\xi$ & $\zeta_D$\\
$\zeta$ & $\omega,\zeta_D$ & $\zeta_D$ & $\nu_D,\omega_D$\\\midrule
$\alpha,\mu$ & $\iota_2$ & $\alpha_D,\mu_D$ & $\iota_2^D$\\
$\eta,g$ & $\alpha,\iota_2$ & $\eta_D,g_D$ & $\alpha_D,\iota_2^D$\\
$\beta,\rho$ & $\chi$ & $\omega,\xi$ & $s$\\
$\omega,\nu_D$ & $\nu$ & $\omega,\zeta_D$ & $\zeta$\\
\bottomrule
\end{tabular}
\end{table}

\begin{proof}
We first prove the positive implications. Adding an independent context
gives the D-versions. A context may also be identified with an edge
already on the path. Thus $\delta$ implies $\nu$, $\nu$ implies both
$\alpha$ and $\mu$, and $\mu_D$ implies $\kappa,\mu_2$. The analogous
statements hold in a fixed D-context. Lemma~\ref{lem:groups} gives
$\mu\Rightarrow g$, and the marked-endpoint argument gives
$\alpha\Rightarrow g$. Substituting $D(a,b)$ as the context on
$a-b-c-d$, with $a,c$ marked, gives $g_D\Rightarrow\varepsilon$.
The implications $\kappa,\mu_2\Rightarrow\varepsilon$ separately add
marks to their right sides. Repeated use of $\lambda$ marks a whole
D-component, after which the environment supplies the queries of
$\kappa$ and $\mu_2$.

The law $\omega$ marks successively every vertex of a marked component;
it therefore implies $\eta$ and $\alpha$. The law $\zeta$ marks every
vertex of every component in the presence of a marked edge; it implies
$\omega$ and its indicated conditional consequence. These arguments
with a retained D-edge prove the rows for $\omega_D,\zeta_D$.
The substitution of $D(x,y)$ into the conditional two-edge marking law
gives $\eta_D\Rightarrow\lambda$. The law $\rho$ marks both endpoints
of every edge; the environment then gives $\delta$, and an added
context gives $\zeta$. The zero laws imply the displayed consequences
because the requisite marked, double-marked or ordinary edge becomes
zero. Identifying the two queried vertices in $\beta$ gives $\xi$;
using two marked edges of a three-edge path gives $g$.

For the first binary implication, $\alpha$ inserts $x_1x_4$ on the
five-edge right side of $\iota_2$, and $\mu$ on
$x_0-x_1-x_4-x_5$ gives its query. The substitution
$(x_0,\ldots,x_5)=(a,b,a,b,c,d)$ gives
$\iota_2\Rightarrow\mu$. These also prove the stated consequences
of $\nu$ and their D-versions. Under $\eta$, an entire E part is marked;
$g$ then shares neighbours along consecutive two-edge subpaths. This
completes each marked component and gives $\alpha$ and $\iota_2$.
The conditional argument retains the same D-edge. Finally, $\omega$
makes $C(x,y)\approx D(x,y)$, which proves the last three binary
implications involving $\omega$. The laws $\rho$ and $\beta$ give
$x+y\preceq xy$ and $\xi$, respectively, whence
$xy\approx x+y+xy\approx x^2$.

We prove completeness by constructing a closed term for every proposed
nonconsequence. Let $E\subseteq B$ be closed under the displayed
implications. If $\tau\in E$, then $E=B$ and there is nothing to prove.
Otherwise use Table~\ref{tab:envelopes} to form an envelope of each
representative right side. A labelled linear term or edge is included
exactly when its label belongs to $E$; the original monomials are always
included. If the label in the final column belongs to $E$, use $0$
instead. The notation $L$ in the table means $L(a,b,c,d)$; in the last
row $P=\sum_{j=0}^4x_jx_{j+1}$. A list in a single entry consists of
separate insertions with the same label.

\begin{table}[!htbp]
\centering\footnotesize
\setlength{\tabcolsep}{4pt}
\caption{Envelopes of representative right sides. The entry $f:v$ means that $v$ is inserted when $f\in E$.}
\label{tab:envelopes}
\begin{tabular}{@{}>{\raggedright\arraybackslash}p{.27\textwidth}>{\raggedright\arraybackslash}p{.28\textwidth}>{\raggedright\arraybackslash}p{.29\textwidth}c@{}}
\toprule
Right side & New linear terms & New opposite-part edges & Zero label\\\midrule
$ab$ & $\rho:a,b$ & --- & $\chi$\\
$C(a,b)$ & $\omega:b$ & --- & $s$\\
$D(a,b)$ & --- & --- & $\xi$\\
$a+ab+bc$ & $\omega:b;\ \eta:c$ & --- & $s$\\
$D(a,b)+ac$ & $\lambda:c$ & --- & $\xi$\\
$L$ & $\rho:a,b,c,d$ & $\delta:ad$ & $\chi$\\
$a+L$ & $\omega:b,d;\ \eta:c$ & $\alpha:ad$ & $s$\\
$b+L$ & $\omega:a,c;\ \eta:d$ & $\mu:ad$ & $s$\\
$a+c+L$ & $\omega:b,d$ & $g:ad$ & $s$\\
$a+b+L$ & $\lambda:c,d$ & $\kappa:ad$ & $\xi$\\
$b+c+L$ & $\lambda:a,d$ & $\mu_2:ad$ & $\xi$\\
$a+b+c+L$ & $\lambda:d$ & $\varepsilon:ad$ & $\xi$\\
$C(a,b)+cd$ & $\omega:b;\ \zeta:c,d$ & --- & $s$\\
$D(a,b)+cd$ & $\zeta_D:c,d$ & --- & $\xi$\\
$C(a,b)+L(c,d,e,f)$ & $\omega:b;\ \zeta:c,d,e,f$ & $\nu:cf$ & $s$\\
$D(a,b)+L(c,d,e,f)$ & $\zeta_D:c,d,e,f$ & $\nu_D:cf$ & $\xi$\\
$C(a,b)+C(c,d)$ & $\omega:b,d$ & $\beta:ad,bc$ & $s$\\
$x_1+P$ & $\omega:x_0,x_2,x_4;\ \eta:x_3,x_5$ &
$\mu:x_0x_3;\ \alpha:x_1x_4;\ \iota_2:x_0x_5,x_2x_5$ & $s$\\
\bottomrule
\end{tabular}

\medskip
\begin{minipage}{.94\textwidth}\footnotesize
For a row with zero label $s$, one may add a disjoint $D(t,u)$.
Replace $\omega,\eta,\zeta,\alpha,\mu,g,\nu,\iota_2$ by their
D-versions and replace $s$ by $\xi$. The added D-edge has no new
nonzero monomial. Same-part edges, squares, and edges between distinct
components other than the two displayed $\beta$ edges have the zero
label. Purely linear right sides remain unchanged when $\tau\notin E$.
\end{minipage}
\end{table}
\FloatBarrier

Every listed insertion is a consequence of its label: the three-edge
rows are the defining laws, the two-edge marking rows use their defining
law in either orientation, and the independent-context rows keep that
context. In the five-edge row, $\mu$ gives $x_0x_3$, $\alpha$ gives
$x_1x_4$, and $\iota_2$ gives both indicated edges by
Lemma~\ref{lem:internalfold}. The zero entries follow by folding as in
Lemma~\ref{lem:elementaryfolds}. Thus each envelope is equivalent to its
original term under the laws in $E$.

Here are the closure conditions, including the possible interactions
between insertions. An unmarked component acquires marks only through
$\rho$, or through $\zeta,\zeta_D$ with the indicated context.
Once those marks occur, its chord is supplied by
$\rho\Rightarrow\delta$, $\zeta\Rightarrow\nu$, or
$\zeta_D\Rightarrow\nu_D$. On an S-component, the two possible
marking actions are even-part propagation by $\eta$ and opposite-part
propagation by $\omega$. The latter creates a D-context. A conditional
rule then has exactly its unconditional action, accounted for by
$\omega\Rightarrow\alpha,\eta$ and by
$\omega\nu_D\Rightarrow\nu$,
$\omega\zeta_D\Rightarrow\zeta$,
$\omega\xi\Rightarrow s$. Even-part propagation followed by a path
insertion is accounted for by $\eta g\Rightarrow\alpha,\iota_2$.
The same statements hold with a pre-existing D-context.

On a three-edge component there is only one missing opposite-part edge.
Its possible insertions are precisely its label and the stronger unary
premises in Table~\ref{tab:horn}; marking a D-component by $\lambda$
supplies all of them through $\kappa,\mu_2$. On the five-edge
component, the four possible missing opposite-part edges are exactly
those in the last row. Inserting $x_0x_3$ and $x_1x_4$ together supplies
the remaining edges exactly through
$\alpha\mu\Rightarrow\iota_2$. If new marks are present, the preceding
marking rules supply the same consequences. Thus no sequence of these
actions escapes the envelope. For two independent marked edges, the
only nonzero connection is the pair of $\beta$ edges; after it is added
the graph is a complete bipartite graph on the same parts. A new
opposite-part mark makes the term zero by
$\omega\beta\Rightarrow\omega\xi\Rightarrow s$.
All other cross-component connections, and all same-part edges, require
the zero label already specified in the table. This exhausts the edge,
marking, context and zero actions of every fixed law.

The path rules introduce no extra possibility. In every nonzero
envelope, two same-part marks in the same component are linked by a
sequence of marked vertices at distance two. For a D-component this
follows from the environment edges between opposite-part marks; for
the five-edge component it follows from the displayed even-part
marking pattern; all other components have at most four vertices.
Consequently any sharing produced by $\gamma_n$ is already produced
by $g$, and similarly in a D-context. This proves both saturation of
the envelopes and the last assertion of the lemma.

Theorem~\ref{thm:normal} now identifies these envelopes with the complete
closures. For every $f\notin E$ the query of $f$ is absent from its
nonzero envelope. The square and fresh-linear queries are covered by
the zero column and the purely linear case, respectively. The finite
algebra $B_X(E\cap\F_{25},\mathbf1_{g\in E},
\mathbf1_{g_D\in E})$ therefore satisfies every law in $E$ and fails
$f$. This excludes every additional implication, including any with
multiple premises.
\end{proof}

\subsection{The states of a canonical signature}

Separate the fixed representatives into the marking set
\[
M=\{\lambda,\eta_D,\eta,\omega_D,\omega,\zeta_D,\zeta,
\xi,\beta,s,\rho,\chi,\tau\}
\]
and the twelve edge representatives. The ordinary edge state is the
intersection with $\{\alpha,\mu,\iota_2,\nu,\delta\}$; its seven
possibilities are listed in Table~\ref{tab:edgestates}. When $q>0$,
the conditioned edge state contains $\varepsilon$ and has the ten
possibilities in the same table. The final column gives the minimum
conditioned state required by each ordinary state, with containment
understood as containment of sets of identities.

\begin{table}[ht]
\centering\small
\caption{Ordinary and conditioned edge states.}
\label{tab:edgestates}
\begin{tabular}{@{}cll@{\qquad}cl@{}}
\toprule
 & Ordinary state & Required & & Conditioned state\\\midrule
$O_0$ & $\varnothing$ & $D_0$ & $D_0$ & $\{\varepsilon\}$\\
$O_1$ & $\{\alpha\}$ & $D_3$ & $D_1$ & $\{\varepsilon,\kappa\}$\\
$O_2$ & $\{\mu\}$ & $D_6$ & $D_2$ & $\{\varepsilon,\mu_2\}$\\
$O_3$ & $\{\mu,\iota_2\}$ & $D_7$ & $D_3$ & $\{\varepsilon,\kappa,\alpha_D\}$\\
$O_4$ & $\{\alpha,\mu,\iota_2\}$ & $D_8$ & $D_4$ & $\{\varepsilon,\kappa,\mu_2\}$\\
$O_5$ & $O_4\cup\{\nu\}$ & $D_9$ & $D_5$ & $D_4\cup\{\alpha_D\}$\\
$O_6$ & $O_5\cup\{\delta\}$ & $D_9$ & $D_6$ & $D_4\cup\{\mu_D\}$\\
 & & & $D_7$ & $D_6\cup\{\iota_2^D\}$\\
 & & & $D_8$ & $D_7\cup\{\alpha_D\}$\\
 & & & $D_9$ & $D_8\cup\{\nu_D\}$\\
\bottomrule
\end{tabular}
\end{table}
\FloatBarrier

For the marking states put
\[
\begin{split}
L_0&=\varnothing,\qquad L_1=\{\lambda\},\qquad
L_2=L_1\cup\{\eta_D\},\\
L_3&=L_2\cup\{\omega_D\},\qquad
L_4=L_3\cup\{\zeta_D\},\qquad
L_5=L_4\cup\{\xi\}.
\end{split}
\]
The eighteen possibilities and their counts are given in
Table~\ref{tab:markstates}. The heading $0F$, for example, means
$p=0$ and $1\le q<\infty$; $F$ always denotes a positive finite
threshold and $I$ denotes $\infty$.

\begin{table}[!htbp]
\centering\small
\caption{Marking states and the number of allowed edge-state pairs. The last column also requires $\alpha$.}
\label{tab:markstates}
\begin{tabular}{@{}r l rrrrrrr@{}}
\toprule
 & $H\cap M$ & $00$ & $0F$ & $0I$ & $FF$ & $FI$ & $II$ & $II,\alpha$\\\midrule
0 & $L_0$ &5&5&10&6&14&25&8\\
1 & $L_1$ &1&2&6&3&10&20&7\\
2 & $L_2$ &1&0&2&0&4&12&6\\
3 & $L_2\cup\{\eta\}$ &1&0&2&0&0&4&4\\
4 & $L_3$ &0&0&2&0&4&12&6\\
5 & $L_3\cup\{\eta\}$ &0&0&2&0&0&4&4\\
6 & $L_4$ &0&0&1&0&2&7&4\\
7 & $L_3\cup\{\eta,\omega\}$ &0&0&0&0&0&3&3\\
8 & $L_4\cup\{\eta\}$ &0&0&1&0&0&3&3\\
9 & $L_5$ &0&0&1&0&2&7&4\\
10 & $L_5\cup\{\beta\}$ &0&0&0&0&0&7&4\\
11 & $L_5\cup\{\eta\}$ &0&0&1&0&0&3&3\\
12 & $L_5\cup\{\eta,\beta\}$ &0&0&0&0&0&3&3\\
13 & $L_4\cup\{\eta,\omega,\zeta\}$ &0&0&0&0&0&2&2\\
14 & $L_4\cup\{\eta,\omega,\zeta,\rho\}$ &0&0&0&0&0&1&1\\
15 & $L_5\cup\{\eta,\omega,\zeta,\beta,s\}$ &0&0&0&0&0&2&2\\
16 & $M\setminus\{\tau\}$ &0&0&0&0&0&1&1\\
17 & $M$ &0&0&0&0&0&1&1\\\midrule
 & Total &8&7&28&9&36&117&66\\
\bottomrule
\end{tabular}
\end{table}
\FloatBarrier

We give the selection rules and the arithmetic underlying this table.
Thus its entries can be obtained directly from the stated alternatives.
Besides the minimum D-state in Table~\ref{tab:edgestates}, impose
\begin{equation}\label{eq:stateconditions}
\begin{array}{rcl@{\qquad}rcl}
\lambda&\Rightarrow&D\supseteq D_4,&
\eta_D,\ q>0&\Rightarrow&D\supseteq D_8,\\
\omega_D&\Rightarrow&D\supseteq D_8,&
\zeta_D&\Rightarrow&D=D_9,\\
\eta,\ p>0&\Rightarrow&O\supseteq O_4,&
\omega&\Rightarrow&O\supseteq O_4,\\
\zeta&\Rightarrow&O\supseteq O_5,&
\rho&\Rightarrow&O=O_6,\\
\omega,\ D=D_9&\Rightarrow&O\supseteq O_5.&&&
\end{array}
\end{equation}
The set $H$ must also have the exact thresholds prescribed by
Theorem~\ref{thm:thresholds}. In particular, when $p=0$ only $O_0$
is allowed; when $0<p<\infty$ only $O_0,O_2$ are allowed. When
$0<q<\infty$, only $D_0,D_1,D_2,D_4,D_6$ are allowed. If $p=q=0$,
replace the D-list by
\[
\varnothing,\quad\{\varepsilon\},\quad
\{\varepsilon,\kappa\},\quad\{\varepsilon,\mu_2\},\quad
\{\varepsilon,\kappa,\mu_2\}.
\]
All marking states incompatible with the prescribed exact thresholds
are omitted. These requirements are necessary and sufficient: they are
exactly Table~\ref{tab:horn}, together with the threshold formulas,
written in the separated coordinates.

To see that there are exactly eighteen marking states, suppose first
that $\omega$ is absent. The conditional marking part is one of
$L_0,\ldots,L_5$. The mark $\eta$ can be added precisely to
$L_2,\ldots,L_5$, and $\beta$ can be added only to $L_5$, with or
without $\eta$. This gives twelve states. If $\omega$ is present but
$\xi$ is absent, the possibilities are
$L_3\cup\{\eta,\omega\}$,
$L_4\cup\{\eta,\omega,\zeta\}$, and the latter with $\rho$.
If both $\omega,\xi$ are present, $s,\beta,\zeta$ follow; one may then
adjoin neither $\rho$ nor $\chi$, both of them, or both together with
$\tau$. These are the remaining three states.

For $q>0$, the numbers of D-states containing
$D_0,D_3,D_6,D_7,D_8,D_9$ are respectively
\[
10,\quad4,\quad4,\quad3,\quad2,\quad1.
\]
Thus row 0 in region $II$ has
$10+4+4+3+2+1+1=25$ choices. Requiring $D\supseteq D_4$ changes
this sum to $6+3+4+3+2+1+1=20$. Requiring
$D\supseteq D_8$ gives $2+2+2+2+2+1+1=12$; requiring $D=D_9$
gives seven choices. If $\eta$ is also present, the ordinary states
reduce to $O_4,O_5,O_6$, giving $2+1+1=4$ or $1+1+1=3$.
If $\omega$ is present, $O_4$ cannot be paired with $D_9$, so the
first of those sums becomes $1+1+1=3$. The remaining ordinary
restrictions leave $O_5,O_6$ or only $O_6$, giving two or one choices.
This derives the entire $II$ column. Requiring $\alpha$ retains only
$O_1,O_4,O_5,O_6$, and gives its last column by the same sums.

In region $FI$, the ordinary choices are only $O_0,O_2$.
The four D requirements just used give respectively
$10+4=14$, $6+4=10$, $2+2=4$, and $1+1=2$.
The compatible marking states are $L_0,L_1,L_2,L_3,L_4,L_5$,
giving $14+10+4+4+2+2=36$.
In region $FF$, the five permitted D-states give five choices with
$O_0$ and one with $O_2$; adding $\lambda$ gives two and one.
There are no other marking states, so the total is $6+3=9$.
For $0I$, only $O_0$ remains, giving ten, six, two or one choices
under the respective D requirements; the ten permitted marking states
give $10+6+2+2+2+2+1+1+1+1=28$.
For $0F$, the counts are five and two. For $00$, the empty marking
state has five choices and each of $L_1,L_2,L_2\cup\{\eta\}$ has
one. This derives the remaining two totals, seven and eight.

\subsection{The six threshold regions}

\begin{theorem}[six-region classification]\label{thm:catalogue}
The legal fixed sets in each of the six disjoint threshold regions have the cardinalities in Table~\ref{tab:regions}. Thus the full lattice has 153 fixed nodes, 43 families indexed by a positive integer, and 9 families indexed by $1\le p\le q<\infty$.
\end{theorem}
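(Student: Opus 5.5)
The plan is to reduce Theorem~\ref{thm:catalogue} to the counts already assembled in Table~\ref{tab:markstates}. By Theorem~\ref{thm:signatures}, subvarieties correspond bijectively to fixed points $(H^*,p^*,q^*)$ of $\Can$. The six regions $00,0F,0I,FF,FI,II$ partition the admissible pairs $0\le p\le q\le\infty$. By Lemma~\ref{lem:finitecalculus}, legality of a fixed set $H$ inside a region depends only on whether each of $p,q$ is $0$, positive finite, or $\infty$. The reason is that the implication calculus on $B$ sees only $g,g_D$, and Theorem~\ref{thm:thresholds} sees finiteness only through its two infinity conditions. Hence the set of legal $H$ is constant along each region.

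\emph{Step 1: characterize the legal fixed sets.} A legal $H$ must be closed under Table~\ref{tab:horn}, with $g\in$ iff $p>0$ and $g_D\in$ iff $q>0$. In addition, the thresholds computed by \eqref{eq:pthreshold}--\eqref{eq:qthreshold} from $(H,p,q)$ must return $(p,q)$. I will decompose $H$ into its marking part $H\cap M$, its ordinary edge state $O_i$ and its conditioned edge state $D_j$.

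The main obstacle is showing that the closed subsets are exactly those described by the separated conditions \eqref{eq:stateconditions} together with the required-state column of Table~\ref{tab:edgestates}. To do this I will go through every row of Table~\ref{tab:horn} and check that each one is either internal to one coordinate or is one of the listed cross conditions. For example, $\omega\nu_D\Rightarrow\nu$ becomes ``$\omega,\ D=D_9\Rightarrow O\supseteq O_5$.'' The two infinity conditions of Theorem~\ref{thm:thresholds} translate into forbidding $I_0$ (respectively $I_D$) and $\eta$ with $p>0$ whenever the threshold is finite. This produces the restrictions $O\in\{O_0,O_2\}$ and $D\in\{D_0,D_1,D_2,D_4,D_6\}$. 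When $p=q=0$, it produces the special five-element D-list, since $g_D$, $\alpha_D$ and $\mu_D$ are then absent.

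\emph{Step 2: count.} For each region I sum, over the eighteen marking states, the number of compatible pairs $(O,D)$, using the arithmetic already laid out after Table~\ref{tab:markstates}. This gives $8,7,28,9,36,117$. I will double-check the $II$ entries by listing the D-states above each required minimum, which gives $10,4,4,3,2,1$.

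\emph{Step 3: convert to nodes.} Region $00$ contributes $8$ nodes and region $II$ contributes $117$. Region $0I$ consists of the single point $(0,\infty)$ and contributes $28$ fixed nodes. Together these give $8+117+28=153$.

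Region $0F$ consists of the points $(0,q)$ with $1\le q<\infty$, and region $FI$ of the points $(p,\infty)$ with $1\le p<\infty$. Each is a one-parameter family, giving $7+36=43$ families.

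Region $FF$ consists of the pairs $1\le p\le q<\infty$ and gives $9$ two-parameter families.

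Finally, distinct legal triples give distinct subvarieties by the uniqueness part of Theorem~\ref{thm:signatures}, so no two of these nodes or families coincide.
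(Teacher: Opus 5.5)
Your proposal is correct and follows essentially the same route as the paper. It uses the bijection with canonical signatures from Theorem~\ref{thm:signatures} and the constancy of legality on each of the six regions (via Lemma~\ref{lem:finitecalculus} and Theorem~\ref{thm:thresholds}); it then separates $H$ into marking, ordinary and conditioned states, sums as in Table~\ref{tab:markstates}, and uses uniqueness of signatures to read off $153$ fixed nodes, $43$ one-parameter families and $9$ two-parameter families. Your planned row-by-row check of Table~\ref{tab:horn} against \eqref{eq:stateconditions} only makes explicit a step the paper asserts in one sentence after Table~\ref{tab:markstates}.
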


\begin{table}[ht]
\centering\small
\caption{The six regions of normalized signatures.}
\label{tab:regions}
\begin{tabular}{@{}lrl@{}}
\toprule
Threshold region & Fixed sets $H$ & Contribution\\\midrule
$p=q=0$ & 8 & 8 fixed nodes\\
$p=0,\ 1\le q<\infty$ & 7 & 7 one-parameter families\\
$p=0,\ q=\infty$ & 28 & 28 fixed nodes\\
$1\le p\le q<\infty$ & 9 & 9 two-parameter families\\
$1\le p<\infty,\ q=\infty$ & 36 & 36 one-parameter families\\
$p=q=\infty$ & 117 & 117 fixed nodes\\
\bottomrule
\end{tabular}
\end{table}

\begin{proof}
Lemma~\ref{lem:finitecalculus} and the state selection rules give every
canonical fixed set, with no reference to a bound on the path indices.
The sums in Table~\ref{tab:markstates} are respectively
$8,7,28,9,36,117$. The derivation following that table establishes these
numbers by the seven ordinary states, the ten conditioned states, and
the eighteen marking states. The threshold theorem allows every actual
integer in each indicated finite region. Thus the counts are independent
of the particular positive finite values of $p,q$.

There are $8+28+117=153$ nodes with no free integer parameter,
$7+36=43$ one-parameter families, and nine families indexed by
$1\le p\le q<\infty$. The regions are disjoint and canonical signatures
are unique, so no node is counted twice.
\end{proof}

\subsection{Order and lattice operations}

Write $\sigma(U)$ for the canonical signature of a subvariety $U$.

\begin{proposition}\label{prop:lattice}
For normalized triples $U=(H,p,q)$ and $W=(K,r,s)$, identifying each with its variety,
\begin{align}
U\subseteq W&\iff H\supseteq K,\ p\ge r,\ q\ge s,\label{eq:order}\\
\sigma(U\wedge W)&=\Can(H\cup K,\max\{p,r\},\max\{q,s\}),\label{eq:meet}\\
\sigma(U\vee W)&=(H\cap K,\min\{p,r\},\min\{q,s\}).\label{eq:join}
\end{align}
Every lower cover of $(H,p,q)$ occurs among the following at most twenty-seven candidates: add one missing fixed representative; if $p<\infty$, replace $(p,q)$ by $(p+1,\max\{q,p+1\})$; if $q<\infty$, replace $q$ by $q+1$. Normalize, remove the original node and duplicates, and keep the inclusion-maximal remaining candidates.
\end{proposition}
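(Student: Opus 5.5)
The plan is to derive all four assertions from Theorem~\ref{thm:signatures}. A normalized triple $(H,p,q)$ records exactly which members of $\F_{25}$ and which path identities $\gamma_i,\gamma_j^D$ hold in the corresponding variety. More precisely, $H$ is the set of true fixed representatives, and $\gamma_i$ (respectively $\gamma_j^D$) holds precisely for $i\le p$ (respectively $j\le q$), by Theorem~\ref{thm:thresholds} and the downward closure in Corollary~\ref{cor:presentation}. Since varieties are ordered by reverse inclusion of their equational theories, the whole proof is an analysis of these truth coordinates.

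For \eqref{eq:order}, suppose first that $U\subseteq W$. Every identity true in $W$ is then true in $U$, so $K\subseteq H$, and the true path indices satisfy $r\le p$ and $s\le q$. Conversely, if the three inequalities hold, then every defining identity of the presentation \eqref{eq:presentation} of $W$ is true in $U$, so $U\subseteq W$. For the meet \eqref{eq:meet}, the intersection $U\cap W$ is defined by the union of the two presentations. By the downward closure of the path families this union is $\mathcal U(H\cup K,\max\{p,r\},\max\{q,s\})$, and Theorem~\ref{thm:signatures} says that its signature is obtained by applying $\Can$.

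The join \eqref{eq:join} is the main obstacle. The identities of $U\vee W$ are exactly those true in both $U$ and $W$. Consequently a fixed representative is true in $U\vee W$ iff it lies in $H\cap K$, and $\gamma_i$ is true iff $i\le\min\{p,r\}$; the same holds for $\gamma_j^D$. By Theorem~\ref{thm:reduction}, $U\vee W$ has a presentation whose representatives are drawn from $\F_{25}$ and the path families. Each such representative is one of the true identities just listed, so $U\vee W$ equals the presentation with coordinates $(H\cap K,\min\{p,r\},\min\{q,s\})$. These coordinates are its exact truth coordinates, so this triple is already normalized and no application of $\Can$ is needed. It remains to check that $\min$ preserves $p\le q$, which is immediate from $p\le q$ and $r\le s$.

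For the covers, let $U'\subsetneq U$. By \eqref{eq:order}, $U'$ satisfies one of three conditions: it has a fixed representative missing from $H$, or it has $p'>p$, or it has $q'>q$. In the first case $U'$ is contained in the candidate obtained by adding that representative. In the second case $\gamma_{p+1}$ holds in $U'$, hence so does $\gamma^D_{p+1}$ because $\gamma_n\Rightarrow\gamma_n^D$. Therefore $U'$ lies below the candidate in which $(p,q)$ is replaced by $(p+1,\max\{q,p+1\})$. In the third case $U'$ lies below the candidate in which $q$ is replaced by $q+1$. Every proper subvariety therefore lies below some normalized candidate different from $U$, and each candidate is a proper subvariety of $U$. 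It follows that the lower covers are exactly the inclusion-maximal candidates, because a cover lies below some candidate and must then equal it. The count of twenty-seven is at most $25+1+1$.
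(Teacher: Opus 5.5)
Your proposal is correct and takes essentially the same route as the paper: the order comes from the exact truth coordinates of normalized triples, the meet from the union of presentations, the already-normalized join from applying the reduction theorem to identities true in both factors, and the cover formula from showing that every strict lower node lies below one of the strict candidates. The one step you leave implicit is that each inclusion-maximal candidate $C$ is actually a lower cover; this follows as in the paper's proof, since any node strictly between $C$ and $U$ would lie below some candidate strictly above $C$.
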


\begin{proof}
Inclusion reverses the truth values of all representatives. Their completeness from Theorem~\ref{thm:reduction} gives \eqref{eq:order}. A meet adjoins the two sets of defining identities, yielding \eqref{eq:meet}. An identity holds in a join exactly when it holds in both factors. Its finite reduction is then true representative by representative in both factors, giving \eqref{eq:join}; this triple is automatically normalized.

Every strict lower node differs by an extra fixed representative or a strictly increased threshold. In the latter case the original threshold is finite, so increasing it by one does not exceed the target. Thus every strict lower node lies below one of the stated strict candidates. A lower cover must itself be such a candidate. If a maximal candidate had an intermediate node below $U$, that intermediate node would lie below a larger strict candidate, a contradiction. This proves the cover formula for the entire lattice, including infinite thresholds.
\end{proof}

\begin{corollary}\label{cor:orderproperties}
The lattice $\Sub(\mathcal V)$ satisfies the ascending chain condition and has no infinite antichain. For $m\ge1$, the number of nodes whose exact thresholds belong to $\{0,1,\ldots,m,\infty\}$ is
\[
153+43m+\frac{9m(m+1)}2.
\]
\end{corollary}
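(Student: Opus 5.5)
Both assertions rest on the order isomorphism \eqref{eq:order} of Proposition~\ref{prop:lattice} together with the census of Theorem~\ref{thm:catalogue}. By \eqref{eq:order}, $\Sub(\mathcal V)$ is order-isomorphic to the set of normalized triples $(H,p,q)$ under the order in which $U\le W$ means $H\supseteq K$, $p\ge r$ and $q\ge s$. This is a suborder of the product $P=2^{\F_{25}}\times\overline{\mathbb N}\times\overline{\mathbb N}$, where $2^{\F_{25}}$ is ordered by reverse inclusion and $\overline{\mathbb N}=\{0,1,2,\dots,\infty\}$ is ordered by the reverse of the usual order.

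For the ascending chain condition I will show that $P$ itself has no strictly ascending infinite chain. Ascending in $\Sub(\mathcal V)$ means shrinking $H$ and decreasing the thresholds in the usual order. The first coordinate can change only finitely often, since $\F_{25}$ is finite. Each threshold lies in the well-ordered set $\{0,1,\dots\}\cup\{\infty\}\cong\omega+1$, so it can strictly decrease only finitely often. Along a chain every coordinate is monotone, so a strictly ascending chain must strictly change some coordinate infinitely often, which is impossible.

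For the absence of infinite antichains I will use Dickson's lemma. Each coordinate is a well-quasi-order: $2^{\F_{25}}$ is finite, and $\omega+1$ is a well-order, hence also a well-quasi-order under either orientation as far as antichains are concerned. Antichains are the same under the reversed order, and a finite product of well-quasi-orders is a well-quasi-order. The lattice embeds in this product as a suborder, so it has no infinite antichain. The one point to state carefully is that "no infinite antichain" does not depend on reversing the order.

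For the count I will invoke Theorem~\ref{thm:catalogue} and Table~\ref{tab:regions}, together with the statement in its proof that every positive finite value is realized with the same number of fixed sets. Canonical signatures are unique, so the nodes with exact thresholds in $\{0,1,\dots,m,\infty\}$ are counted by region:
\begin{itemize}
\item the $153$ nodes with no free parameter;
\item each of the $43$ one-parameter families, whose parameter has $m$ admissible values $1,\dots,m$;
\item each of the $9$ two-parameter families, whose parameters satisfy $1\le p\le q\le m$, giving $\binom{m+1}{2}=m(m+1)/2$ pairs.
\end{itemize}
Summing gives $153+43m+9m(m+1)/2$.

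The main obstacle is making sure that an exact threshold pair is legal for every integer value, not just some. Here I rely on Theorem~\ref{thm:thresholds} and the remark in the proof of Theorem~\ref{thm:catalogue} that the counts do not depend on the particular finite values.
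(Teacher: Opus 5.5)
Your proposal is correct and follows the paper's proof: ACC comes from the monotone finite truth set and the two thresholds in $\omega+1$, the antichain claim comes from the coordinatewise order (you cite Dickson's lemma where the paper proves the pairs case by hand), and the count is summed region by region from Table~\ref{tab:regions}. Make the antichain step precise by reversing all three coordinates at once, so that Dickson's lemma is applied to $(2^{\F_{25}},\subseteq)\times(\omega+1,\le)\times(\omega+1,\le)$; the reversed $\omega+1$ alone is not a well-quasi-order, and mixed orientations such as $\omega\times\omega^{\mathrm{op}}$ can have infinite antichains.
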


\begin{proof}
Along an ascending chain, the finite truth set can strictly decrease only finitely often. Each threshold is nonincreasing in $\mathbb N_0\cup\{\infty\}$ and cannot decrease infinitely often. For an infinite antichain, restrict to an infinite subfamily with the same finite truth set and with the same pattern of infinite coordinates. With at most one finite coordinate the members form a chain. With two finite coordinates, an infinite collection of distinct pairs has comparable members: if one first coordinate occurs infinitely often, order the second coordinates; otherwise choose an increasing sequence of first coordinates, along which incomparability would force an impossible infinite decreasing sequence of second coordinates. This proves the antichain assertion. The counting formula follows directly from Table~\ref{tab:regions}.
\end{proof}
\section{The three finite subvariety lattices}
\label{sec:finitelattices}

The finite lattices follow by imposing their ambient identities on the
states already classified. In particular, their sizes are consequences
of the same implication calculus.

\begin{theorem}\label{thm:smalllattices}
The lattices $\Sub(\V(R_{01}))$, $\Sub(\V(R_{02}))$ and
$\Sub(\V(R_{11}))$ have respectively $11$, $11$ and $66$ elements.
Their complete relative bases and lower covers are listed in
Tables~\ref{tab:finite01}, \ref{tab:finite02} and \ref{tab:finite11}.
Exactly $5$, $5$ and $18$ of these nodes, respectively, satisfy $\delta$.
\end{theorem}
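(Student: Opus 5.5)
The plan is to realize each of $\V(R_{01})$, $\V(R_{02})$, $\V(R_{11})$ as a subvariety of $\mathcal V=\V(R_{12})$, so that its subvariety lattice becomes the principal ideal of $\Sub(\mathcal V)$ below it. Then we read off its elements from the canonical signatures of Theorem~\ref{thm:signatures} and the state tables. The inclusions follow from the additive orders in Theorem~\ref{thm:additions}. Each of $R_{01},R_{02},R_{11}$ should be a homomorphic image or subalgebra of a product involving $R_{12}$; more directly, we check that every identity of $R_{12}$ holds in the others via Theorem~\ref{thm:criterion}. Indeed, the absorptions listed in (d) (both endpoints marked, odd path) are also absorbed under (a), (b), (c); for (a),(b) an odd path between marks even gives $u=0$. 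By Theorem~\ref{thm:bases}, this amounts to checking that each $\theta_n$ holds in all three semirings, which the criterion gives at once.

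Next we compute the canonical signature of each of the three varieties. Using the bases of Theorem~\ref{thm:bases}, $\V(R_{11})=\mathcal V[\alpha]$, $\V(R_{01})=\mathcal V[\xi,\alpha,\beta]$ and $\V(R_{02})=\mathcal V[\xi,\eta]$. We apply $\Can$ to these presentations. By Table~\ref{tab:horn}, $\alpha$ yields $\alpha_D,g,g_D,\kappa,\varepsilon$, and Theorem~\ref{thm:thresholds} gives $p^*=q^*=\infty$ since $\alpha\in I_0$. The presence of $\beta$ or $\xi$ similarly forces infinite thresholds. For $\eta,\xi$ we also get $\zeta_D,\eta_D,\lambda,\kappa,\mu_2$, and with $\xi\in I_D$ we obtain $q^*=\infty$ while $p^*=0$ (as $\eta$ alone has $a=0$). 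So $\V(R_{02})$ lies in region $0I$ with marking state $L_5\cup\{\eta\}$ (row 11), and the other two lie in $II$ with $\alpha$.

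The subvarieties are then exactly the canonical triples $(H,p,q)$ with $H$ containing the computed truth set, by \eqref{eq:order}. For $\V(R_{11})$ this is precisely the last column of Table~\ref{tab:markstates} (region $II$ with $\alpha$), whose total is $66$. For $\V(R_{01})$ we restrict to marking rows containing $\xi,\beta$, namely rows $10,12,15,16,17$ in that column, giving $4+3+2+1+1=11$. For $\V(R_{02})$ we take the rows containing $\eta$ and $\xi$, namely rows $11,12,15,16,17$. Region $0I$ contributes $1$, and the finite thresholds are excluded because $\xi$ forces $q=\infty$, so the count requires care. Every $p$ is allowed, but with $\eta$ present the threshold formula forces $p\in\{0,\infty\}$. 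The nodes in $II$ need not contain $\alpha$, giving $3+3+2+1+1$ from the $II$ column, plus row 11 of $0I$, for a total of $11$. The main obstacle is this bookkeeping: to justify which rows and columns to keep, we verify against \eqref{eq:stateconditions} that each kept state really contains the base identities, and make sure no state is double counted.

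Finally, the $\delta$ counts come from the same tables: $\delta$ holds exactly in ordinary state $O_6$. For each kept marking row we count the edge-state pairs with $O=O_6$; there is one per row in the relevant columns. This gives $5$ for $R_{01}$ and $R_{02}$ (one per each of five rows) and $18$ for $R_{11}$ (one per each of the eighteen marking rows, all of which admit $O_6$ with $\alpha$). The lower covers in the tables are produced mechanically by Proposition~\ref{prop:lattice} restricted to the ideal.
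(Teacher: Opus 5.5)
Your proposal is correct and follows the paper's route. You realize $\V(R_{11})=\mathcal V[\alpha]$, $\V(R_{01})=\mathcal V[\xi,\alpha,\beta]$ and $\V(R_{02})=\mathcal V[\xi,\eta]$ as principal ideals of $\Sub(\mathcal V)$, read the counts $66$, $4+3+2+1+1$ and $1+(3+3+2+1+1)$ off Table~\ref{tab:markstates} (using that $\eta$ forces $p\in\{0,\infty\}$), count $\delta$-nodes through the unique pair $(O_6,D_9)$, and obtain covers from Proposition~\ref{prop:lattice}. Two small points: you do not carry out the paper's check that each displayed relative basis in the appendix closes, under Table~\ref{tab:horn} and Theorem~\ref{thm:thresholds}, to its intended state; and $\xi$ alone forces only $q=\infty$, not both thresholds, though your later computation uses this correctly.
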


\begin{proof}
The bases in Theorem~\ref{thm:bases} give
\begin{align}
\V(R_{01})&=\mathcal V[\xi,\alpha,\beta],\label{eq:r01ambient}\\
\V(R_{02})&=\mathcal V[\xi,\eta],\label{eq:r02ambient}\\
\V(R_{11})&=\mathcal V[\alpha].\label{eq:r11ambient}
\end{align}
For $R_{11}$, the law $\alpha$ makes both thresholds infinite. The
ordinary edge state must be one of $O_1,O_4,O_5,O_6$.
The last column of Table~\ref{tab:markstates} therefore gives
\[
8+7+6+4+6+4+4+3+3+4+4+3+3+2+1+2+1+1=66.
\]
Every marking state admits exactly one state pair containing $\delta$:
it is $(O_6,D_9)$. Hence eighteen of these nodes satisfy $\delta$.

For $R_{01}$, the law $\beta$ already implies $\xi$ and all path laws,
and $\alpha$ must also be present. The possible marking rows are
$10,12,15,16,17$ in Table~\ref{tab:markstates}. Their contributions in
the last column are $4,3,2,1,1$, giving eleven nodes. Each row again has
exactly one pair with $\delta$, giving five.

For $R_{02}$, the law $\xi$ makes $q=\infty$. Since $\eta$ is
present, Theorem~\ref{thm:thresholds} gives either $p=0$ or $p=\infty$.
When $p=0$, the only permitted marking row containing $\xi,\eta$ is
row 11, and its contribution in region $0I$ is one. When $p=\infty$,
the possible rows are $11,12,15,16,17$, with respective contributions
$3,3,2,1,1$. This gives $1+3+3+2+1+1=11$ nodes. The node with $p=0$
does not satisfy $\delta$; in each of the other five rows precisely
$(O_6,D_9)$ does. Thus the number satisfying $\delta$ is five.

For each state the indicated identities are a relative basis.
Table~\ref{tab:horn} removes redundant identities to give the shorter
bases in the appendix. Conversely, closing any displayed basis by that
table and applying the threshold formulas recovers its state. Distinct
states differ on a representative; the envelope and algebra in
Lemma~\ref{lem:finitecalculus} separate them. Thus every counted state
defines a distinct variety and there are no others. Inclusion and
covers follow from Proposition~\ref{prop:lattice}.
\end{proof}

\begin{remark}
A finite relative basis over $\mathcal V$ need not be an absolute finite
identity basis, since $\mathcal V$ is nonfinitely based. The next section
gives the additional argument needed for that distinction.
\end{remark}
\section{Finite bases and the unique limit subvariety}
\label{sec:finitebasis}

Put $\mathcal L=\V(SR_6)=\V(R_{00})$ and $\mathcal A=\V(R_{11})$. We prove a criterion for every subvariety of $\mathcal V$, so that the finite-basis classification does not rely on extrapolating a finite list of examples.

\begin{lemma}\label{lem:srcriterion}
In $SR_6$, a term is identically $0$ exactly when it is basic-zero or has a marked vertex. Every remaining term absorbs precisely its displayed monomials. In particular, an unmarked bipartite term with disjoint isolated linear summands is rigid, and
\[
\mathcal L=\mathcal V[s].
\]
\end{lemma}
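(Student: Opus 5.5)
The proof has three steps: compute the identity criterion in $SR_6$ by valuations, as in Theorem~\ref{thm:criterion}; read off rigidity; and then identify $\mathcal L$ inside $\mathcal V$ through the normal forms of Theorem~\ref{thm:normal}.

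\emph{The criterion.} In $R_{00}$ the additive order is the transitive closure of $a<p<0$, $b<q<0$, $c<0$. No nonzero element lies below $c$, so $c+x=0$ for every $x\ne c$. Let $u$ be a term that is not basic-zero and has an edge. A valuation gives $u=c$ exactly when it restricts to a homomorphism $V(G)\to P_4$ and every linear summand has image at most $c$. Since $c$ and $0$ are the only such elements, every linear summand, whether marked or isolated, must map to $c$ or $0$. A marked vertex, however, must map into $\{p,b,a,q\}$ to keep its edge nonzero, and none of these is at most $c$. Hence a marked term is identically $0$. Basic-zero terms are $0$ as in every $R_{ij}$, which gives the first sentence of the lemma.

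For unmarked bipartite terms we show that exactly the displayed monomials are absorbed. The countervaluations in the proof of Theorem~\ref{thm:criterion} use only images allowed here:
\begin{enumerate}[label=(\roman*),leftmargin=2.2em]
\item isolated linear summands go to $c$, and fresh variables go to $0$;
\item same-part and cross-component quadratic queries are made $0$ by a middle-edge colouring with independently oriented components;
\item a missing opposite-part edge is refuted by sending its endpoints to $p,q$ and the rest of the two parts to $a,b$;
\item an unmarked graph vertex as linear query is sent to $p$;
\item squares and higher monomials always evaluate to $0$.
\end{enumerate}
In each case no marked image constraint is involved, because there are no marks. The purely linear case, assigning all displayed variables to $c$, carries over verbatim. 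Rigidity then follows exactly as in Lemma~\ref{lem:rigidity}.

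\emph{Identifying $\mathcal L$.} First, $s$ holds in $SR_6$, since $C(x,y)$ has the marked vertex $x$ and is therefore $0$; so $\mathcal L\subseteq\mathcal V[s]$. For the converse it suffices to show that every identity of $SR_6$ holds in $\mathcal V[s]$. Using Theorem~\ref{thm:normal} with $H=\{s\}$, $p=q=0$, we compare closures. If $u$ has a marked vertex, then $u$ absorbs a marked edge $C(x,y)$, so $s$ makes $u=0$ in $\mathcal V[s]$. This matches $SR_6$. If $u$ is unmarked and not basic-zero, rule (1) of the closure has nothing to act on. The reason is that $s$, read as $0\preceq C(x,y)$, needs a marked edge in the image, and unmarked terms contain none. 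Rule (2) needs marks, and rule (3) needs marks or odd cycles. So the closure of $u$ is its environment normal form, namely its displayed monomial set.

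It follows that two terms have equal $SR_6$-values exactly when their $\mathcal V[s]$-closures coincide. Both are $0$ if marked or basic-zero, and otherwise both are the displayed monomial sets. Hence $\mathcal V[s]\models u\approx v$ whenever $SR_6\models u\approx v$, and $\mathcal V[s]\subseteq\mathcal L$. Alternatively, one can compare canonical signatures, since $\Can(\{s\},0,0)$ is the row-15-type node containing $s$.

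The main obstacle is the second step, verifying that no countervaluation secretly needs a linear image other than $c$. The delicate cases are the unmarked-vertex linear query and the isolated variable appearing in a quadratic query. I would check these by writing the valuation explicitly with images in $\{p,b,a,q\}$ on the graph and $c$ off the graph.
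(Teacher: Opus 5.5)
Your proposal is correct and is essentially the paper's proof: the same valuation argument (a marked vertex must take a value in $\{p,b,a,q\}$, none of which lies below $c$), the same countervaluations taken from Theorem~\ref{thm:criterion}, rigidity as in Lemma~\ref{lem:rigidity}, and the same comparison with the closure $C_{\{s\},0,0}$ of Theorem~\ref{thm:normal} to obtain $\mathcal L=\mathcal V[s]$. Two small points: in $R_{00}$ the only element at most $c$ is $c$ itself, since $0$ is the top, so your ``$c$ or $0$'' should read ``$c$'', which only strengthens your conclusion; and checking $SR_6\models s$ alone does not give $\mathcal L\subseteq\mathcal V[s]$ without $SR_6\in\mathcal V$, but your two-way ``exactly when'' comparison of closures already supplies both inclusions.
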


\begin{proof}
No vertex of the nonzero-product path in $R_{00}$ lies below $c$. Hence a linear summand sharing its variable with a quadratic summand forces value $0$. A basic-zero term also has value $0$. Conversely, an unmarked bipartite graph maps to the middle edge $a-b$, and its isolated linear variables can be sent to $c$, giving nonzero value $c$. If the graph is empty, assigning all variables to $c$ also gives $c$.

For a new linear query in an unmarked graph, move its vertex to an end of $P_4$; for a same-part or cross-component edge, orient the components to make the query product zero. A missing opposite-part edge is separated by mapping its endpoints to $p,q$ and the remaining vertices of their parts to $a,b$. A graph-external variable may be sent to $0$, or to $c$ if it is a displayed isolated linear summand used in a quadratic query. These assignments, and the purely linear assignment from Theorem~\ref{thm:criterion}, show that only displayed monomials are absorbed. Two-way absorption proves rigidity.

The law $s$ makes every marked edge $x+xy$ zero, and so makes exactly the additional marked terms zero in $\mathcal V$. Theorem~\ref{thm:normal} with $H=\{s\}$ leaves every other unmarked bipartite term unchanged. It therefore has exactly the criterion just proved for $SR_6$, establishing $\mathcal L=\mathcal V[s]$.
\end{proof}

\begin{theorem}[uniform finite-basis criterion]\label{thm:fbcriterion}
For every $\mathcal U\subseteq\mathcal V$, the following conditions are equivalent:
\begin{enumerate}[label=(\roman*),leftmargin=2.2em]
\item $\mathcal U$ is finitely based;
\item $\mathcal U\models\delta$;
\item $\mathcal U\subseteq\mathcal V[\delta]=\mathcal A[\delta]$;
\item $\mathcal L\nsubseteq\mathcal U$.
\end{enumerate}
If $\mathcal U=\mathcal V[H,p,q]$ satisfies these conditions, a finite basis consists of the commutative ai-semiring axioms together with
\begin{equation}\label{eq:finitebasis}
x\preceq y^2,\qquad x^2\approx yzt,\qquad \delta,\qquad H.
\end{equation}
Here $\mathcal V[H,p,q]$ abbreviates the presentation \eqref{eq:presentation}.
\end{theorem}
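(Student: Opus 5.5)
The plan is to prove the cycle (ii)$\Leftrightarrow$(iii), (ii)$\Leftrightarrow$(iv), (ii)$\Rightarrow$(i) with the explicit basis \eqref{eq:finitebasis}, and (i)$\Rightarrow$(iv).

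\emph{(ii)$\Leftrightarrow$(iii).} By Table~\ref{tab:horn} we have $\delta\Rightarrow\nu\Rightarrow\alpha$, so $\mathcal V[\delta]\subseteq\mathcal V[\alpha]=\mathcal A$ by \eqref{eq:r11ambient}. Hence $\mathcal V[\delta]=\mathcal A[\delta]$. The equivalence of (ii) and (iii) is then just the definition of the relative variety.

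\emph{(ii)$\Leftrightarrow$(iv).} First, $\mathcal L\not\models\delta$. The right side $L(a,b,c,d)$ of $\delta$ is unmarked and bipartite, so by Lemma~\ref{lem:srcriterion} it absorbs only its displayed monomials. Hence $\mathcal U\models\delta$ forces $\mathcal L\nsubseteq\mathcal U$. Conversely, suppose $\mathcal L\nsubseteq\mathcal U$. By Corollary~\ref{cor:presentation}, $\mathcal U$ satisfies some representative in $\F_{25}\cup\{\gamma_n,\gamma_n^D\}$ that fails in $\mathcal L=\mathcal V[s]$. I will read off from Lemma~\ref{lem:srcriterion} which representatives fail in $\mathcal L$. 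Every right side containing a marked vertex becomes $0$ under $s$. This covers all $\gamma_n$, $\gamma_n^D$ and every representative with a $C$ or $D$ context or a linear summand. The only failures are therefore those whose right side is unmarked and nonzero in $\mathcal L$, namely $\rho,\delta,\chi,\tau$. Table~\ref{tab:horn} gives $\tau\Rightarrow\chi\Rightarrow\rho\Rightarrow\delta$, so $\mathcal U\models\delta$.

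\emph{(ii)$\Rightarrow$(i) and the basis.} Write $\mathcal U=\mathcal V[H,p,q]$ with $\delta$ true. Since $\alpha\in I_0$ is implied, Theorem~\ref{thm:thresholds} makes both thresholds infinite already from $\delta$. Hence $\mathcal U=\mathcal V[H\cup\{\delta\}]$, and the path families contribute nothing new. The remaining task is to replace $\mathcal V$ by finitely many absolute axioms. By Theorem~\ref{thm:bases}, $\mathcal V$ is based by $\Sigma_0\cup\{\theta_n\}$, so I must derive every $\sigma_n$ and $\theta_n$ from $x\preceq y^2$, $x^2\approx yzt$ and $\delta$.

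For $\theta_n$, iterate $\delta$ on three consecutive edges of the path, keeping the rest as additive context. This shortens the odd path by two each time until it yields $x_1x_{2n+2}$ directly, and the linear summands are only extra context. For $\sigma_n$, shorten the odd cycle in the same way to a triangle $x_1x_2+x_2x_3+x_3x_1$. Then apply $\delta$ to the walk $x_2-x_3-x_1-x_2$, which inserts the square $x_2^2$. Since $x\preceq y^2$ makes squares the top element, we get $x\preceq$ the cycle term. I expect the main bookkeeping to be checking that these shortening steps are legitimate instances of $\delta$ with identified variables; the walk-based substitution is allowed because substitutions may identify variables.

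\emph{(i)$\Rightarrow$(iv).} This is the main step; I argue by contraposition, mirroring Theorem~\ref{thm:nfb}. Assume $\mathcal L\subseteq\mathcal U$ and that $\mathcal U$ has a finite basis $F$ with at most $m$ variables per identity. Take an odd $L>\max\{m,3\}$, the $L$-cycle term $w$, and a new variable $z$. Then $w\approx w+z$ holds in $\mathcal V\supseteq\mathcal U$, so it must be derivable from $F$.

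Consider the first step of a derivation that changes the polynomial, using $s\approx t\in F$. The degree arguments of Theorem~\ref{thm:nfb} carry over verbatim:
\begin{itemize}
\item if there is a multiplicative context, $s$ is purely linear;
\item otherwise $s$ has no squares, no cubic monomials and no variable occurring both linearly and quadratically;
\item $s$ has no short odd cycle, since a homomorphic image of one would give a short odd closed walk in $C_L$.
\end{itemize}
So $s$ is an unmarked bipartite term with disjoint isolated linear summands. Since $s\approx t$ holds in $\mathcal U\supseteq\mathcal L$, the rigidity part of Lemma~\ref{lem:srcriterion} shows that $s$ and $t$ have the same monomials, so the step is trivial. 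This contradiction shows no finite basis exists.

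The delicate points are two. First, I must use rigidity in $SR_6$ rather than in $R_{12}$, which is exactly where the hypothesis $\mathcal L\subseteq\mathcal U$ enters. Second, I must confirm that the purely linear $s$ in the multiplicative-context case is also covered by that rigidity.
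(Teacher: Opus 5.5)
Your proof is correct. For (ii)$\Leftrightarrow$(iii), (ii)$\Rightarrow$(i) and (i)$\Rightarrow$(iv) it follows the paper's argument. There is one cosmetic difference: you dispose of $\gamma_n,\gamma_n^D$ by quoting Theorem~\ref{thm:thresholds}, whereas the paper notes directly that $\delta$ shortens the odd path between their query endpoints.

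The genuine difference is in (iv)$\Rightarrow$(ii).

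\emph{The paper's route.} It does not use the presentation theorem here. It takes an arbitrary absorption $q\preceq v$ true in $\mathcal U$ and false in $SR_6$. Lemma~\ref{lem:srcriterion} shows that $v$ is unmarked bipartite and $q$ is absent. The paper then folds $v$ directly onto one of four laws:
\begin{itemize}
\item $\tau$, if the graph is empty;
\item $\rho$, if $q$ is a linear graph vertex;
\item $\delta$, if $q$ is a missing opposite-part edge in one component;
\item $\chi$, in every other case.
\end{itemize}
Each of these implies $\delta$, using $\theta_1$ in the case of $\rho$.

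\emph{Your route.} You use Corollary~\ref{cor:presentation} to get a defining representative of $\mathcal U$ that fails in $\mathcal L=\mathcal V[s]$. You then observe that $s$ collapses every right side containing a marked vertex, so only $\rho,\delta,\chi,\tau$ can fail. The positive rows $\tau\Rightarrow\chi\Rightarrow\rho\Rightarrow\delta$ of Table~\ref{tab:horn} finish the argument.

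\emph{What each buys.} Your route is shorter and gives a clean by-product: the canonical signature of $\mathcal L$ is $(\F_{25}\setminus\{\rho,\delta,\chi,\tau\},\infty,\infty)$, so $\mathcal L\subseteq\mathcal U$ exactly when $\delta$ is false in $\mathcal U$. The price is that this implication now depends on the full global reduction theorem. The paper's direct folding needs only Lemma~\ref{lem:srcriterion} and elementary substitutions; the paper uses the reduction only to produce the finite basis in (ii)$\Rightarrow$(i).

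A small wording point: your phrase ``or a linear summand'' should exclude $\tau$. Its right-hand summand $x$ is not a graph vertex, so it is not marked and is not collapsed by $s$. Since you list $\tau$ among the failures anyway, the argument is unaffected.
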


\begin{proof}
First suppose $\mathcal L\nsubseteq\mathcal U$. There is an identity true in $\mathcal U$ and false in $SR_6$; select a monomial absorption $q\preceq v$ with the same property. Lemma~\ref{lem:srcriterion} implies that $v$ is an unmarked bipartite term with disjoint isolated linear summands, and $q$ is absent. If its graph is empty, collapsing variables gives $\tau$ as in Lemma~\ref{lem:elementaryfolds}.

If the graph has an edge and $q$ is a graph vertex as a linear query, fold the two parts to the endpoints of an edge, retaining that queried endpoint, and map isolated linear variables to the product. This extracts $\rho$. For a missing opposite-part edge within one component, retain the endpoints as $a,d$ and fold the remaining two parts to $c,b$. The right side maps into $ab+bc+cd$, giving $\delta$. In every other case, orient components freely to make a quadratic query a square, or map a graph-external query variable to a quadratic term so that the query becomes zero. The right side folds to a single edge; thus the extracted law is $\chi$. This also covers an already quadratic square or a higher monomial query.

Each of $\tau,\chi$ implies $\delta$. In $\mathcal V$, so does $\rho$: the path term $ab+bc+cd$ then absorbs both $a$ and $d$, and the environment law $\theta_1$ gives the edge $ad$. Therefore (iv) implies (ii).

The equivalence of (ii) and (iii) follows since $\delta$ implies $\alpha$ by addition of a marked endpoint, so $\mathcal V[\delta]=\mathcal A[\delta]$. To prove (ii) implies (i), use Corollary~\ref{cor:presentation} to choose a finite fixed part $H$ and two thresholds. Repeated three-edge shortening by $\delta$ yields the endpoint absorption for every odd walk. Identifying the first and last vertices of an odd closed walk makes its square absorbed, hence the whole odd-cycle term zero. Thus $\delta$ yields all $\sigma_n$ and all $\theta_n$. It also yields every $\gamma_n$ and $\gamma_n^D$, because their query endpoints are linked by an odd path. Consequently \eqref{eq:finitebasis} implies the full environment basis and both path families. Conversely, all its laws hold in $\mathcal U$. This proves it is a finite absolute basis.

For the remaining implication, assume $\mathcal L\subseteq\mathcal U\subseteq\mathcal V$ and suppose $F$ is a finite basis of $\mathcal U$. Choose an odd cycle length larger than three and than the number of variables in every member of $F$. Its polynomial $w$ satisfies $w\approx w+z$ in $\mathcal U$, because $\mathcal U\subseteq\mathcal V$. At a first polynomial-changing rewrite, the source side $s$ of the basis identity must be purely linear if a nonempty multiplicative context is present. With no such context, a square or higher monomial cannot map into $w$, a variable occurring both linearly and quadratically is excluded by degree, and a short odd cycle in $s$ would map to a shorter odd closed walk in the chosen long cycle. Thus $s$ is unmarked and bipartite with disjoint isolated linear variables, exactly as in Theorem~\ref{thm:nfb}.

Every identity of $F$ holds in $\mathcal L$. Lemma~\ref{lem:srcriterion} therefore makes that rewrite a polynomial equality, contradicting its choice. This proves that every variety in $[\mathcal L,\mathcal V]$ is nonfinitely based, so (i) implies (iv). Finally, $SR_6$ fails $\delta$: assign the three-edge path to $p-b-a-q$, giving right-side value $c$ and query value $pq=0$. This also directly confirms that (ii) excludes containment of $\mathcal L$.
\end{proof}

\begin{corollary}\label{cor:size}
Exactly eighteen subvarieties of $\mathcal V$ are finitely based. All other subvarieties are nonfinitely based, and $\mathcal L$ is its unique limit subvariety. The varieties generated by $R_{01},R_{02},R_{11}$ have respectively $5,5,18$ finitely based subvarieties, and each has the same unique limit subvariety $\mathcal L$.
\end{corollary}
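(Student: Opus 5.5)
The plan is to combine the uniform criterion of Theorem~\ref{thm:fbcriterion} with the counts already in hand. By that theorem, a subvariety $\mathcal U\subseteq\mathcal V$ is finitely based exactly when $\mathcal U\models\delta$, that is, exactly when $\mathcal U\subseteq\mathcal V[\delta]=\mathcal A[\delta]$. So the finitely based subvarieties of $\mathcal V$ are precisely the members of $\Sub(\mathcal A[\delta])$.

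Every such subvariety satisfies $\alpha$, since $\delta\Rightarrow\nu\Rightarrow\alpha$ by Table~\ref{tab:horn}. Hence each lies in $\Sub(\mathcal A)=\Sub(\V(R_{11}))$. Theorem~\ref{thm:smalllattices} states that exactly $18$ nodes of that lattice satisfy $\delta$, namely one state pair $(O_6,D_9)$ for each of the eighteen marking states. This gives the count of eighteen for $\mathcal V$ and the count $18$ for $\V(R_{11})$. For $R_{01}$ and $R_{02}$, Theorem~\ref{thm:smalllattices} gives $5$ and $5$ nodes satisfying $\delta$, and the same criterion applies because these varieties lie inside $\mathcal V$. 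Every remaining node fails $\delta$ and is therefore nonfinitely based by (i)$\Leftrightarrow$(ii).

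For the limit statement, first note that $\mathcal L$ is nonfinitely based, since $\mathcal L\subseteq\mathcal L$ gives the failure of (iv). Next, let $\mathcal U\subsetneq\mathcal L$. Then $\mathcal L\nsubseteq\mathcal U$, so (iv) holds and $\mathcal U$ is finitely based. Thus $\mathcal L$ is a limit variety.

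Uniqueness is the key step. Suppose $\mathcal U$ is a limit subvariety of $\mathcal V$. Since $\mathcal U$ is nonfinitely based, (iv) fails, so $\mathcal L\subseteq\mathcal U$. If the inclusion were proper, $\mathcal L$ would be a nonfinitely based proper subvariety of $\mathcal U$, which contradicts the limit property. Hence $\mathcal U=\mathcal L$.

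It remains to check that $\mathcal L$ lies in each of $\V(R_{01})$, $\V(R_{02})$ and $\V(R_{11})$, so that it is their unique limit subvariety as well. Because $\mathcal L=\mathcal V[s]$, this amounts to showing that $s$ implies the ambient laws. From Table~\ref{tab:horn}, $s$ yields $\beta$ and $\omega$, and $\beta$ yields $\xi$. Also $\omega$ yields $\alpha$ and $\eta$. Therefore $\mathcal L$ satisfies \eqref{eq:r01ambient}, \eqref{eq:r02ambient} and \eqref{eq:r11ambient}.

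The main obstacle is making sure the count of eighteen is taken inside all of $\mathcal V$ and not only inside $\mathcal A$. This is handled by the inclusion $\mathcal V[\delta]\subseteq\mathcal A$ established above.
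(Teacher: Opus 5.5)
Your proposal is correct and follows essentially the same route as the paper: you identify finite basability with $\delta$ via Theorem~\ref{thm:fbcriterion}, read the counts $18,5,5$ off the $\delta$ columns of Theorem~\ref{thm:smalllattices}, and get uniqueness from the fact that every nonfinitely based subvariety contains $\mathcal L$. The only differences are minor. You verify $\mathcal L\subseteq\V(R_{01}),\V(R_{02}),\V(R_{11})$ explicitly through $s\Rightarrow\beta,\omega$, $\beta\Rightarrow\xi$, $\omega\Rightarrow\alpha,\eta$ from Table~\ref{tab:horn}, and you omit the paper's remark that these containments are strict, which the statement does not actually require.
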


\begin{proof}
The finite-basis nodes of $\mathcal V$ are precisely the subvarieties of $\mathcal A[\delta]$. Theorem~\ref{thm:smalllattices} and Table~\ref{tab:finite11} give exactly eighteen such nodes. The other two counts follow from the same $\delta$ columns in their complete tables.

Theorem~\ref{thm:fbcriterion} makes $\mathcal L$ itself nonfinitely based and every proper subvariety of it finitely based. Every nonfinitely based subvariety of $\mathcal V$ contains $\mathcal L$; if it is larger, it has a proper nonfinitely based subvariety and cannot be a limit variety. This proves uniqueness. All three smaller ambient varieties contain $\mathcal L$, as follows either from their bases or from Lemma~\ref{lem:srcriterion}. The containments are strict: setting $x=b,y=p$ makes $x+xy=c\ne0=x^2$ in each of the four new semirings, so they all fail $s$.
\end{proof}
\section{The sixteen-node interval above \texorpdfstring{$\mathcal V[\mu]$}{V[mu]}}
\label{sec:upperinterval}

Put $\mathcal P=\mathcal V[\mu]$. The interval above this variety illustrates how the global reduction, finite closure and explicit separating algebra work together.

\begin{lemma}\label{lem:seven}
Every identity true in $\mathcal P$ is equivalent, relative to $\mathcal V$, to a subset of
\[
B=\{\varepsilon,\gamma_1^D,\gamma_1,\kappa,\mu_2,\mu_D,\mu\}.
\]
\end{lemma}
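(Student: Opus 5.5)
By Theorem~\ref{thm:reduction}, any identity $u\approx v$ is equivalent relative to $\mathcal V$ to a finite set $E\subseteq\F_{25}\cup\{\gamma_n,\gamma_n^D\}$. If the identity holds in $\mathcal P$, then so does every member of $E$, because each member is obtained from $u\approx v$ by a substitution (the extraction direction of the reduction lemmas). So it is enough to show two things. First, every representative in $\F_{25}\cup\{\gamma_n,\gamma^D_n\}$ that is true in $\mathcal P$ is equivalent, relative to $\mathcal V$, to a subset of $B$. Second, we need to know which representatives are true in $\mathcal P$, and this is computed from the canonical signature $\Can(\{\mu\},0,0)$.

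\emph{Step 1: compute the signature.} Since $\mu\in H$, Theorem~\ref{thm:thresholds} gives $a=b=1$. No member of $I_0$, $I_D$, $\eta$ or $\eta_D$ is present, so $p^*=q^*=1$. This means $\gamma_1,\gamma_1^D$ hold in $\mathcal P$, while $\gamma_n$ and $\gamma_n^D$ fail for $n\ge2$. For the fixed part we use Table~\ref{tab:horn} together with its completeness (Lemma~\ref{lem:finitecalculus}). The closure of $\{\mu\}$ adds $g,\mu_D$. Then $\mu_D$ gives $g_D,\kappa,\mu_2$, and each of $g_D,\kappa,\mu_2$ gives $\varepsilon$. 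No binary row fires: $\alpha,\eta,\omega,\beta,\rho$ are all absent. So the fixed identities true in $\mathcal P$ are exactly $\{\mu,\mu_D,\mu_2,\kappa,\varepsilon\}$, and the true path identities are exactly $\gamma_1,\gamma_1^D$. Together these form the set $B$ of the lemma. Hence every $E$ arising from an identity true in $\mathcal P$ is already a subset of $B$, and the statement follows.

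\emph{Step 2: the one point that needs care.} The reduction lemmas do not by themselves guarantee that the representatives extracted from a $\mathcal P$-valid identity are $\mathcal P$-valid. They do guarantee it in the extraction direction: each representative is a substitution instance of the original absorption, possibly with context added. I will therefore note explicitly that every entry of Tables~\ref{tab:elementaryfolds}, \ref{tab:markedfold}, \ref{tab:markedfold}-adjacent lemmas and \ref{lem:crossfold} is derived from the query by a folding. It follows that the union $E$ is a consequence of $u\approx v$. Since $E$ also implies $u\approx v$, we get $\mathcal V[u\approx v]=\mathcal V[E]$, and every member of $E$ holds in $\mathcal P$.

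The main obstacle is the case, in Lemmas~\ref{lem:extractmu} and \ref{lem:internalfold}, where the reduction first extracts $\mu$ and then a further representative relative to $v_\mu$. Here I must make sure the combined set is still a consequence of the original query, not merely a consequence relative to $\mu$. The argument is that $\mu$ (or $\mu_D$) was itself extracted from the query, so the later steps are implied by query plus $\mu$, which is again implied by the query alone. This step also shows that $\iota_2$ and $\alpha$ cannot appear: their falsity in $\mathcal P$ comes from the completeness part of Lemma~\ref{lem:finitecalculus}, so they are never extracted from a $\mathcal P$-valid identity.
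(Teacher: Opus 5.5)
Your proof is correct and follows essentially the same route as the paper. Like the paper, you apply Theorem~\ref{thm:reduction}, obtain the fixed truth set $\{\varepsilon,\kappa,\mu_2,\mu_D,\mu\}$ by closing $\{\mu\}$ under Table~\ref{tab:horn} (its completeness rules out every other fixed representative), and read off the exact thresholds $p^*=q^*=1$ from Theorem~\ref{thm:thresholds}. Your Step~2 is unnecessary: the conclusion $\mathcal V[u\approx v]=\mathcal V[E]$ of Theorem~\ref{thm:reduction} already gives $\mathcal P\subseteq\mathcal V[E]$, so every member of $E$ holds in $\mathcal P$ without re-examining the individual foldings.
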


\begin{proof}
Theorem~\ref{thm:reduction} reduces every identity to fixed and path representatives. Every representative in the reduction must hold in $\mathcal P$. On the twenty-five fixed right sides, the closure $C_{\{\mu\},0,0}$ gives exactly the truth set
\[
\{\varepsilon,\kappa,\mu_2,\mu_D,\mu\}.
\]
Indeed, Table~\ref{tab:horn} closes $\{\mu\}$ to precisely these
five fixed identities and $g,g_D$. Its completeness excludes every
other fixed representative. Theorem~\ref{thm:thresholds} gives the exact path thresholds $(1,1)$, ruling out $\gamma_2,\gamma_2^D$ and every larger index. Only the seven listed representatives remain.
\end{proof}

Their positive implications are generated by
\begin{equation}\label{eq:upperhorn}
\begin{aligned}
\gamma_1^D&\Rightarrow\varepsilon,&
\gamma_1&\Rightarrow\gamma_1^D,\\
\kappa&\Rightarrow\varepsilon,&
\mu_2&\Rightarrow\varepsilon,\\
\mu_D&\Rightarrow\gamma_1^D,\kappa,\mu_2,&
\mu&\Rightarrow\gamma_1,\mu_D.
\end{aligned}
\end{equation}
For $\gamma_1^D\Rightarrow\varepsilon$, substitute the double-marked edge $D(a,b)$ as context on the path $a-b-c-d$. For the consequences of $\mu_D$, use $D(b,c)$ to obtain $\mu_2$, use $D(a,b)$ to obtain $\kappa$, and reverse the path to obtain $\gamma_1^D$. The remaining implications add context or a marked linear summand. Thus every rule in \eqref{eq:upperhorn} is an equational consequence in the stated environment.

\begin{theorem}\label{thm:interval}
The interval $[\mathcal P,\mathcal V]$ consists of the sixteen nodes in Table~\ref{tab:uppercatalogue}, with twenty-five cover edges. The rules \eqref{eq:upperhorn} are complete, including implications with multiple premises. The interval is distributive, and
\[
\mathcal V[\gamma_1,\mu_D]\succ\mathcal P,\qquad
\mathcal V[\gamma_1,\mu_D]=\mathcal P\vee\V(R_{01}).
\]
The displayed node is the unique upper cover of $\mathcal P$. Exactly seven nodes in the interval contain $\mathcal A=\V(R_{11})$; the remaining nine are incomparable with $\mathcal A$.
\end{theorem}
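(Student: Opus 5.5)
The plan is to reduce the interval to a finite combinatorial object, a lattice of down-sets, and read every assertion off it. By Lemma~\ref{lem:seven} and Theorem~\ref{thm:reduction}, each $\mathcal U\in[\mathcal P,\mathcal V]$ equals $\mathcal V[E_{\mathcal U}]$, where $E_{\mathcal U}\subseteq B$ is the set of members of $B$ true in $\mathcal U$. This set is closed under relative consequence. Conversely, every consequence-closed $E\subseteq B$ defines a variety containing $\mathcal P$, since all of $B$ holds in $\mathcal P$. Distinct closed sets define distinct varieties, because a representative outside $E$ fails in the separating algebra $B_X(E\cap\F_{25},\mathbf 1_{\gamma_1\in E},\mathbf 1_{\gamma_1^D\in E})$ of Lemma~\ref{lem:finitecalculus}. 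Hence $[\mathcal P,\mathcal V]$ is anti-isomorphic to the lattice of consequence-closed subsets of $B$.

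\textbf{Completeness of \eqref{eq:upperhorn}.} The positive rules have already been verified. For completeness, I will restrict the complete calculus of Table~\ref{tab:horn} to the seven elements (with $g=\gamma_1$, $g_D=\gamma_1^D$). Every row whose premises lie in $B$ has its consequences, after intersecting with $B$, generated by \eqref{eq:upperhorn}. The only binary rows whose premises lie in $B$ are none: each involves $\alpha$, $\eta$, $\beta$, $\omega$ or $\nu_D$. Therefore the implications among members of $B$, including multi-premise ones, are exactly the reflexive–transitive closure of the unary rules \eqref{eq:upperhorn}. One point needs care: Lemma~\ref{lem:finitecalculus} only records positivity of thresholds. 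But inside $\mathcal P$ the thresholds are at most $(1,1)$ by Theorem~\ref{thm:thresholds}, so $g$ and $g_D$ faithfully represent $\gamma_1,\gamma_1^D$.

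\textbf{Counting and distributivity.} Closed sets are therefore the down-sets of the poset determined by \eqref{eq:upperhorn}. This poset has $\varepsilon$ at the bottom. Above $\varepsilon$ sit $\gamma_1^D$, $\kappa$ and $\mu_2$. Above $\gamma_1^D$ sits $\gamma_1$. The element $\mu_D$ lies above $\gamma_1^D$, $\kappa$ and $\mu_2$, and $\mu$ lies above $\gamma_1$ and $\mu_D$. Enumerating down-sets gives:
\begin{itemize}
\item the empty set;
\item $\{\varepsilon\}$ together with an arbitrary subset of $\{\gamma_1^D,\kappa,\mu_2\}$, giving $8$;
\item those additionally containing $\gamma_1$ (hence $\gamma_1^D$), giving $4$;
\item those containing $\mu_D$, with or without $\gamma_1$, giving $2$;
\item the whole set.
\end{itemize}
The total is $1+8+4+2+1=16$. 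A lattice of down-sets is distributive, and so is its dual. Cover edges correspond to removing a maximal element of a down-set, so their number is the sum, over all down-sets, of the number of maximal elements; I would tabulate this in Table~\ref{tab:uppercatalogue} and check that it equals $25$. The largest down-set is all of $B$ and corresponds to $\mathcal P$. Its only maximal element is $\mu$, so $\mathcal P$ has the unique upper cover $\mathcal V[B\setminus\{\mu\}]$. This variety equals $\mathcal V[\gamma_1,\mu_D]$ by \eqref{eq:upperhorn}.

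\textbf{The join and the comparison with $\mathcal A$.} By \eqref{eq:join}, the truth set of $\mathcal P\vee\V(R_{01})$ is $B$ intersected with the truth set of $\V(R_{01})=\mathcal V[\xi,\alpha,\beta]$. I expect this computation to be the main obstacle: I must show that $\V(R_{01})$ satisfies $\gamma_1$ and $\mu_D$ but not $\mu$. For the positive part I would use Table~\ref{tab:horn}: $\alpha$ and $\beta$ give $g$, and $\xi$ makes every D-context zero, so all conditioned laws, $\mu_D$ among them, hold trivially. For the failure of $\mu$, I would apply Theorem~\ref{thm:criterion}(a) directly: on $b+L(a,b,c,d)$ the only mark is $b$, and $a$ lies at even distance from it, so the edge $ad$ is not absorbed. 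For $\mathcal A=\mathcal V[\alpha]$, the table shows that the truth set of $\mathcal A$ restricted to $B$ is $\{\varepsilon,\gamma_1^D,\gamma_1,\kappa\}$; in particular $\mu_2$ is not among the consequences of $\alpha$, as the envelope shows. A node contains $\mathcal A$ iff its down-set lies inside this set, which gives $7$ down-sets. A node can never be contained in $\mathcal A$, since nodes of the interval fail $\alpha$. So the remaining nine nodes are incomparable with $\mathcal A$.
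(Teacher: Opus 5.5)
Your proposal is correct and is essentially the paper's proof. You identify the nodes of $[\mathcal P,\mathcal V]$ with consequence-closed subsets (order ideals) of the seven representatives. Completeness follows because no row of Table~\ref{tab:horn} with premises among them leads outside the set. Distributivity and the unique upper cover are read off the down-set lattice, the join comes from intersecting truth sets, and incomparability with $\mathcal A$ comes from $\mathcal P$ failing $\alpha$.

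Two small repairs are needed. First, in $b+L(a,b,c,d)$ the vertex $a$ is adjacent to the mark $b$; it is $d$ that lies at even distance. The correct reason $\mu$ fails in $R_{01}$ is that neither endpoint of $ad$ is marked, so criterion (a) cannot absorb it. The valuation sending $a-b-c-d$ to $p-b-a-q$ confirms this: the right side is $c$ and the query is $pq=0$. Second, the maximal-element tally you deferred does give $25$: seven down-sets have one maximal element, six have two, and two have three. This is a valid alternative to the paper's count through $C_3\times C_2\times C_2$.
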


\begin{proof}
Every subset closed under \eqref{eq:upperhorn} is also closed under
Table~\ref{tab:horn} when viewed as a subset of its seven symbols:
no other rule in that table has its premises inside this set and its
conclusion outside it. Lemma~\ref{lem:finitecalculus} therefore proves
that no further implication, with any number of premises, holds.

Use the coordinate order
$(\varepsilon,\gamma_1^D,\gamma_1,\kappa,\mu_2,\mu_D,\mu)$
in Table~\ref{tab:uppercatalogue}. The empty
set is one choice. Every nonempty set contains $\varepsilon$. If it
contains neither $\mu_D$ nor $\mu$, the three symbols
$\gamma_1^D,\kappa,\mu_2$ are independent, while $\gamma_1$ may be
chosen only when $\gamma_1^D$ has been chosen. There are consequently
$3\cdot2\cdot2=12$ such nonempty sets. If $\mu_D$ is present but
$\mu$ is absent, then $\gamma_1^D,\kappa,\mu_2$ are forced and
$\gamma_1$ is optional, giving two more. Finally, $\mu$ forces all
seven symbols, giving one. The total is $1+12+2+1=16$.

Equivalently, these closed sets are the order ideals of the poset whose
cover relations, from smaller to larger elements, are
\[
\varepsilon<\gamma_1^D,\kappa,\mu_2,\qquad
\gamma_1^D<\gamma_1,\mu_D,\qquad
\kappa,\mu_2<\mu_D,\qquad
\gamma_1,\mu_D<\mu.
\]
The twelve nonempty sets omitting $\mu_D,\mu$ form
$C_3\times C_2\times C_2$, where $C_m$ is the $m$-element chain. They have
$2\cdot2\cdot2+3\cdot1\cdot2+3\cdot2\cdot1=20$ cover edges.
One more edge joins the empty set to $\{\varepsilon\}$. The two sets
containing $\mu_D$ and omitting $\mu$ each have one additional lower
neighbour obtained by removing $\mu_D$; they also have an edge between
them. One final edge joins the full set to the set omitting $\mu$.
Thus the total is $20+1+2+1+1=25$. Reversing inclusion gives the
variety covers in Table~\ref{tab:uppercatalogue}.

Order ideals are closed under union and intersection, so this interval
is distributive. The full ideal has just one maximal proper ideal,
namely the first six coordinates, proving the assertion about the
unique upper cover of $\mathcal P$. Lemma~\ref{lem:seven} excludes
any further variety in the interval. Distinct ideals are separated by
the corresponding nonzero envelope in Lemma~\ref{lem:finitecalculus}
and its algebra $B_X$.

The common representative truth set of $\mathcal P$ and $R_{01}$ consists of the first six coordinates of $B$; their exact common thresholds are $(1,1)$. Indeed, $R_{01}$ satisfies all of $B$ except $\mu$, as is also given by its complete normal-form signature. The join formula \eqref{eq:join} therefore gives $\mathcal P\vee\V(R_{01})=\mathcal V[\gamma_1,\mu_D]$. The truth set of $\mathcal A$ within $B$ is $\{\varepsilon,\gamma_1^D,\gamma_1,\kappa\}$, so the varieties containing it correspond to the ideals of
the four-element poset on
$\{\varepsilon,\gamma_1^D,\gamma_1,\kappa\}$.
Besides the empty ideal there are $3\cdot2=6$ choices, giving seven. Every other row contains $\mathcal P$ and fails to contain $\mathcal A$. Since $\mathcal P$ fails $\alpha$, no such row can be contained in $\mathcal A$ either, proving incomparability.
\end{proof}

\begin{table}[ht]
\centering\small
\caption{All nodes of $[\mathcal P,\mathcal V]$.}
\label{tab:uppercatalogue}
\begin{tabular}{@{}l l c l@{}}
\toprule Node & Relative basis & Truth vector & Lower covers\\\midrule
P00 & $\varnothing$ & \texttt{0000000} & P01\\
P01 & $\{\varepsilon\}$ & \texttt{1000000} & P02, P03, P04\\
P02 & $\{\gamma_1^D\}$ & \texttt{1100000} & P05, P06, P07\\
P03 & $\{\kappa\}$ & \texttt{1001000} & P06, P08\\
P04 & $\{\mu_2\}$ & \texttt{1000100} & P07, P08\\
P05 & $\{\gamma_1\}$ & \texttt{1110000} & P09, P10\\
P06 & $\{\gamma_1^D,\kappa\}$ & \texttt{1101000} & P09, P11\\
P07 & $\{\gamma_1^D,\mu_2\}$ & \texttt{1100100} & P10, P11\\
P08 & $\{\kappa,\mu_2\}$ & \texttt{1001100} & P11\\
P09 & $\{\gamma_1,\kappa\}$ & \texttt{1111000} & P12\\
P10 & $\{\gamma_1,\mu_2\}$ & \texttt{1110100} & P12\\
P11 & $\{\gamma_1^D,\kappa,\mu_2\}$ & \texttt{1101100} & P12, P13\\
P12 & $\{\gamma_1,\kappa,\mu_2\}$ & \texttt{1111100} & P14\\
P13 & $\{\mu_D\}$ & \texttt{1101110} & P14\\
P14 & $\{\gamma_1,\mu_D\}$ & \texttt{1111110} & P15\\
P15 & $\{\mu\}$ & \texttt{1111111} & ---\\
\bottomrule
\end{tabular}
\end{table}

\begin{figure}[ht]
\centering
\begin{tikzpicture}[x=1.7cm,y=0.8cm]
\coordinate (P00) at (0,7);
\coordinate (P01) at (0,6);
\coordinate (P02) at (-1.4,5);
\coordinate (P03) at (0.4,5);
\coordinate (P04) at (2.1,5);
\coordinate (P05) at (-2.5,4);
\coordinate (P06) at (-1,4);
\coordinate (P07) at (0.5,4);
\coordinate (P08) at (2,4);
\coordinate (P09) at (-1.8,3);
\coordinate (P10) at (-0.2,3);
\coordinate (P11) at (1.4,3);
\coordinate (P12) at (-0.7,2);
\coordinate (P13) at (1.1,2);
\coordinate (P14) at (0,1);
\coordinate (P15) at (0,0);
\draw[line width=0.45pt] (P00) -- (P01);
\draw[line width=0.45pt] (P01) -- (P02);
\draw[line width=0.45pt] (P01) -- (P03);
\draw[line width=0.45pt] (P01) -- (P04);
\draw[line width=0.45pt] (P02) -- (P05);
\draw[line width=0.45pt] (P02) -- (P06);
\draw[line width=0.45pt] (P02) -- (P07);
\draw[line width=0.45pt] (P03) -- (P06);
\draw[line width=0.45pt] (P03) -- (P08);
\draw[line width=0.45pt] (P04) -- (P07);
\draw[line width=0.45pt] (P04) -- (P08);
\draw[line width=0.45pt] (P05) -- (P09);
\draw[line width=0.45pt] (P05) -- (P10);
\draw[line width=0.45pt] (P06) -- (P09);
\draw[line width=0.45pt] (P06) -- (P11);
\draw[line width=0.45pt] (P07) -- (P10);
\draw[line width=0.45pt] (P07) -- (P11);
\draw[line width=0.45pt] (P08) -- (P11);
\draw[line width=0.45pt] (P09) -- (P12);
\draw[line width=0.45pt] (P10) -- (P12);
\draw[line width=0.45pt] (P11) -- (P12);
\draw[line width=0.45pt] (P11) -- (P13);
\draw[line width=0.45pt] (P12) -- (P14);
\draw[line width=0.45pt] (P13) -- (P14);
\draw[line width=0.45pt] (P14) -- (P15);
\fill (P00) circle (1.5pt);
\node[anchor=south west,inner sep=2pt,font=\small,fill=white] at (P00) {P00};
\fill (P01) circle (1.5pt);
\node[anchor=south west,inner sep=2pt,font=\small,fill=white] at (P01) {P01};
\fill (P02) circle (1.5pt);
\node[anchor=south west,inner sep=2pt,font=\small,fill=white] at (P02) {P02};
\fill (P03) circle (1.5pt);
\node[anchor=south west,inner sep=2pt,font=\small,fill=white] at (P03) {P03};
\fill (P04) circle (1.5pt);
\node[anchor=south west,inner sep=2pt,font=\small,fill=white] at (P04) {P04};
\fill (P05) circle (1.5pt);
\node[anchor=south west,inner sep=2pt,font=\small,fill=white] at (P05) {P05};
\fill (P06) circle (1.5pt);
\node[anchor=south west,inner sep=2pt,font=\small,fill=white] at (P06) {P06};
\fill (P07) circle (1.5pt);
\node[anchor=south west,inner sep=2pt,font=\small,fill=white] at (P07) {P07};
\fill (P08) circle (1.5pt);
\node[anchor=south west,inner sep=2pt,font=\small,fill=white] at (P08) {P08};
\fill (P09) circle (1.5pt);
\node[anchor=south west,inner sep=2pt,font=\small,fill=white] at (P09) {P09};
\fill (P10) circle (1.5pt);
\node[anchor=south west,inner sep=2pt,font=\small,fill=white] at (P10) {P10};
\fill (P11) circle (1.5pt);
\node[anchor=south west,inner sep=2pt,font=\small,fill=white] at (P11) {P11};
\fill (P12) circle (1.5pt);
\node[anchor=south west,inner sep=2pt,font=\small,fill=white] at (P12) {P12};
\fill (P13) circle (1.5pt);
\node[anchor=south west,inner sep=2pt,font=\small,fill=white] at (P13) {P13};
\fill (P14) circle (1.5pt);
\node[anchor=south west,inner sep=2pt,font=\small,fill=white] at (P14) {P14};
\fill (P15) circle (1.5pt);
\node[anchor=south west,inner sep=2pt,font=\small,fill=white] at (P15) {P15};
\end{tikzpicture}
\caption{The complete interval $[\mathcal P,\mathcal V]$; node labels agree with Table~\ref{tab:uppercatalogue}.}
\label{fig:interval}
\end{figure}
\Needspace{15\baselineskip}
\section{Conclusion and open problems}

We have classified the compatible additions on the common multiplication
of $SR_6$ and $TR_6$ and studied the four remaining isomorphism types.
Each has an explicit infinite identity basis and is nonfinitely based.
Their subvariety lattices have $11$, $11$, $66$ and countably infinitely
many elements. The last lattice is described by a finite implication
calculus and two exact path thresholds. The classification of its
ordinary, conditioned and marking states gives the six-region count,
while the uniform finite-basis criterion identifies eighteen finitely
based nodes and the unique limit subvariety $\V(SR_6)$.

The strong nonfinite-basis status of the four semirings remains open.
It would also be useful to determine sharp bounds for the sizes of
finite algebras separating distinct signatures, and to compare the
present graph reductions with those for other varieties of
semilattice-ordered semigroups.
\FloatBarrier

\Needspace{7\baselineskip}
\bigskip
\paragraph{Acknowledgments.}
This work was supported by the National Natural Science Foundation of China (12371024), the Science and Technology Research Program of Chongqing Municipal Education Commission (KJZD-K2024011102) and the Chongqing Natural Science Foundation Innovation and Development Joint Fund (Municipal Education Commission) (CSTB2025NSCQ-LZX0067).

\clearpage
\appendix
\section{Complete finite catalogues}
\label{app:catalogues}
The following tables list every node of the three finite lattices.
A row with relative basis $E$ denotes $\V(R_{ij})[E]$.
The column $\delta$ is its exact truth value; Theorem~\ref{thm:fbcriterion}
identifies this column with finite basability. Covers are lower covers
and refer to row numbers in the same table. Row numbers are local labels.
Each full signature is recovered from its displayed basis using
Table~\ref{tab:horn} and Theorem~\ref{thm:thresholds}.
\Needspace{18\baselineskip}
{\small\renewcommand{\arraystretch}{1.1}
\begin{longtable}{@{}r p{0.43\textwidth} c p{0.27\textwidth}@{}}
\caption{All subvarieties of $\V(R_{01})$.}\label{tab:finite01}\\
\toprule Row & Relative basis $E$ & $\delta$ & Lower covers\\\midrule\endfirsthead
\multicolumn{4}{c}{Table \thetable\ (continued)}\\
\toprule Row & Relative basis $E$ & $\delta$ & Lower covers\\\midrule\endhead
\bottomrule\endfoot
1 & $\varnothing$ & 0 & 2\\
2 & $\{\iota_2\}$ & 0 & 3, 4\\
3 & $\{\eta\}$ & 0 & 6\\
4 & $\{\nu\}$ & 0 & 5, 6\\
5 & $\{\delta\}$ & 1 & 7\\
6 & $\{\eta,\nu\}$ & 0 & 7, 8\\
7 & $\{\eta,\delta\}$ & 1 & 9\\
8 & $\{s\}$ & 0 & 9\\
9 & $\{\delta,s\}$ & 1 & 10\\
10 & $\{\chi\}$ & 1 & 11\\
11 & $\{\tau\}$ & 1 & ---\\
\end{longtable}
}
\Needspace{18\baselineskip}
{\small\renewcommand{\arraystretch}{1.1}
\begin{longtable}{@{}r p{0.43\textwidth} c p{0.27\textwidth}@{}}
\caption{All subvarieties of $\V(R_{02})$.}\label{tab:finite02}\\
\toprule Row & Relative basis $E$ & $\delta$ & Lower covers\\\midrule\endfirsthead
\multicolumn{4}{c}{Table \thetable\ (continued)}\\
\toprule Row & Relative basis $E$ & $\delta$ & Lower covers\\\midrule\endhead
\bottomrule\endfoot
1 & $\varnothing$ & 0 & 2\\
2 & $\{\gamma_1\}$ & 0 & 3, 4\\
3 & $\{\beta\}$ & 0 & 5\\
4 & $\{\nu\}$ & 0 & 5, 6\\
5 & $\{\nu,\beta\}$ & 0 & 7, 8\\
6 & $\{\delta\}$ & 1 & 7\\
7 & $\{\delta,\beta\}$ & 1 & 9\\
8 & $\{s\}$ & 0 & 9\\
9 & $\{\delta,s\}$ & 1 & 10\\
10 & $\{\chi\}$ & 1 & 11\\
11 & $\{\tau\}$ & 1 & ---\\
\end{longtable}
}
\clearpage
{\small\renewcommand{\arraystretch}{1.05}
\begin{longtable}{@{}r p{0.43\textwidth} c p{0.27\textwidth}@{}}
\caption{All subvarieties of $\V(R_{11})$.}\label{tab:finite11}\\
\toprule Row & Relative basis $E$ & $\delta$ & Lower covers\\\midrule\endfirsthead
\multicolumn{4}{c}{Table \thetable\ (continued)}\\
\toprule Row & Relative basis $E$ & $\delta$ & Lower covers\\\midrule\endhead
\bottomrule\endfoot
1 & $\varnothing$ & 0 & 2\\
2 & $\{\mu_2\}$ & 0 & 3, 4\\
3 & $\{\lambda\}$ & 0 & 5\\
4 & $\{\iota_2^D\}$ & 0 & 5, 6, 8\\
5 & $\{\lambda,\iota_2^D\}$ & 0 & 7, 9, 12\\
6 & $\{\nu_D\}$ & 0 & 9, 13\\
7 & $\{\eta_D\}$ & 0 & 10, 11, 14\\
8 & $\{\iota_2\}$ & 0 & 12, 13\\
9 & $\{\lambda,\nu_D\}$ & 0 & 10, 16\\
10 & $\{\eta_D,\nu_D\}$ & 0 & 15, 20\\
11 & $\{\omega_D\}$ & 0 & 15, 21\\
12 & $\{\lambda,\iota_2\}$ & 0 & 14, 16\\
13 & $\{\nu_D,\iota_2\}$ & 0 & 16, 17\\
14 & $\{\eta_D,\iota_2\}$ & 0 & 19, 20, 21\\
15 & $\{\omega_D,\nu_D\}$ & 0 & 22, 28\\
16 & $\{\lambda,\nu_D,\iota_2\}$ & 0 & 20, 23\\
17 & $\{\nu\}$ & 0 & 18, 23\\
18 & $\{\delta\}$ & 1 & 24\\
19 & $\{\eta\}$ & 0 & 25, 26\\
20 & $\{\eta_D,\nu_D,\iota_2\}$ & 0 & 25, 27, 28\\
21 & $\{\omega_D,\iota_2\}$ & 0 & 26, 28\\
22 & $\{\zeta_D\}$ & 0 & 29, 36\\
23 & $\{\lambda,\nu\}$ & 0 & 24, 27\\
24 & $\{\lambda,\delta\}$ & 1 & 31\\
25 & $\{\eta,\nu_D\}$ & 0 & 32, 33\\
26 & $\{\eta,\omega_D\}$ & 0 & 33, 34\\
27 & $\{\eta_D,\nu\}$ & 0 & 31, 32, 35\\
28 & $\{\omega_D,\nu_D,\iota_2\}$ & 0 & 33, 35, 36\\
29 & $\{\xi\}$ & 0 & 30, 42\\
30 & $\{\beta\}$ & 0 & 43\\
31 & $\{\eta_D,\delta\}$ & 1 & 37, 38\\
32 & $\{\eta,\nu\}$ & 0 & 37, 39\\
33 & $\{\eta,\omega_D,\nu_D\}$ & 0 & 39, 40\\
34 & $\{\omega\}$ & 0 & 46\\
35 & $\{\omega_D,\nu\}$ & 0 & 38, 39, 41\\
36 & $\{\zeta_D,\iota_2\}$ & 0 & 40, 41, 42\\
37 & $\{\eta,\delta\}$ & 1 & 44\\
38 & $\{\omega_D,\delta\}$ & 1 & 44, 45\\
39 & $\{\eta,\omega_D,\nu\}$ & 0 & 44, 46, 47\\
40 & $\{\eta,\zeta_D\}$ & 0 & 47, 48\\
41 & $\{\zeta_D,\nu\}$ & 0 & 45, 47, 49\\
42 & $\{\xi,\iota_2\}$ & 0 & 43, 48, 49\\
43 & $\{\beta,\iota_2\}$ & 0 & 50, 51\\
44 & $\{\eta,\omega_D,\delta\}$ & 1 & 52, 53\\
45 & $\{\zeta_D,\delta\}$ & 1 & 53, 54\\
46 & $\{\omega,\nu_D\}$ & 0 & 52, 59\\
47 & $\{\eta,\zeta_D,\nu\}$ & 0 & 53, 55, 59\\
48 & $\{\eta,\xi\}$ & 0 & 50, 55\\
49 & $\{\nu,\xi\}$ & 0 & 51, 54, 55\\
50 & $\{\eta,\beta\}$ & 0 & 57\\
51 & $\{\nu,\beta\}$ & 0 & 56, 57\\
52 & $\{\omega,\delta\}$ & 1 & 61\\
53 & $\{\eta,\zeta_D,\delta\}$ & 1 & 58, 61\\
54 & $\{\delta,\xi\}$ & 1 & 56, 58\\
55 & $\{\eta,\nu,\xi\}$ & 0 & 57, 58\\
56 & $\{\delta,\beta\}$ & 1 & 60\\
57 & $\{\eta,\nu,\beta\}$ & 0 & 60, 63\\
58 & $\{\eta,\delta,\xi\}$ & 1 & 60\\
59 & $\{\zeta\}$ & 0 & 61, 63\\
60 & $\{\eta,\delta,\beta\}$ & 1 & 64\\
61 & $\{\zeta,\delta\}$ & 1 & 62, 64\\
62 & $\{\rho\}$ & 1 & 65\\
63 & $\{s\}$ & 0 & 64\\
64 & $\{\delta,s\}$ & 1 & 65\\
65 & $\{\chi\}$ & 1 & 66\\
66 & $\{\tau\}$ & 1 & ---\\
\end{longtable}
}

\end{document}